\newtheorem{thm}{Theorem}[section]
\newtheorem{prop}[thm]{Proposition}
\newtheorem{lem}[thm]{Lemma}
\newtheorem{defn}[thm]{Definition}
\theoremstyle{definition} 
\newtheorem{disc}[thm]{Discussion}
\newtheorem*{ass}{Assertion} 
\newtheorem*{assum}{Assumption}
\numberwithin{equation}{section}
\theoremstyle{definition}
\newtheorem{rem}[thm]{Remark}
\newtheorem{ex}[thm]{Example}
\newcommand{\G}{\Gamma}
\newcommand{\id}{\rm id}
\newcommand{\N}{\mathbb{N}}
\newcommand{\C}{\mathbb{C}}
\newcommand{\Z}{\mathbb{ Z}}
\newcommand{\Hom}{\operatorname{Hom}}
\newcommand{\hra}{\hookrightarrow}
\newcommand{\KK}{\rm K}
\newcommand{\R}{\mathbb{ R}}
\newcommand{\rk}{\operatorname{rk}}
\newcommand{\x}{\times}
\newcommand{\CP}{\mathbb{ C}{\rm P}}
\def\C{\mathord{\mathbb C}}
\def\D{\mathord{\mathbb D}}
\def\G{\mathord{\mathbb G}}
\def\H{\mathord{\mathbb H}}
\def\K{\mathord{\mathbb K}}
\def\O{\mathord{\mathbb O}}
\def\R{\mathord{\mathbb R}}
\def\P{\mathord{\mathbb P}}
\def\Z{\mathord{\mathbb Z}}
\def\N{\mathord{\mathbb N}}
\def\SS{\mathord{\mathbb S}}
\def\3{{\ss}}
\def\2{\frac{1}{2}}
\def\x{\times}
\def\.{\cdot}
\def\d{\partial}
\def\o{\circ}
\def\<{\langle}
\def\>{\rangle}
\def\id{{\rm id}}
\def\diag{\mathop{\rm diag}\nolimits}
\def\Hom{\mathop{\rm Hom}\nolimits}
\def\End{\mathop{\rm End}\nolimits}
\def\Fix{\mathop{\rm Fix}\nolimits}
\def\Map{\mathop{\rm Map}\nolimits}
\def\Hopf{\mathop{\rm Hopf}\nolimits}
\def\Iso{\mathop{\rm Iso}\nolimits}
\def\Im{\mathop{\rm Im\,}\nolimits}
\def\Re{\mathop{\rm Re\,}\nolimits}
\def\Cl{\mathop{\rm Cl}\nolimits}
\def\Spin{\mathop{\rm Spin}\nolimits} 
\def\SO{\mathop{\rm SO}\nolimits} 
\def\tr{\mathop{\rm tr}\nolimits} 
\def\U{{\rm U}} 
\def\Or{{\rm O}}
\def\Sp{{\rm Sp}} 
\def\Gl{{\rm Gl}} 
\def\SU{{\rm SU}} 
\def\SSpan{\mathop{\rm Span\,}\nolimits}
\def\ind{\mathop{\rm ind}\nolimits}
\def\trace{\mathop{\rm trace\,}\nolimits}
\def\nhalf{\begin{smallmatrix} \underline{n} \cr 2 \end{smallmatrix}}
\def\g{\mathfrak{g}}
\def\t{\mathfrak{t}}
\def\so{\mathfrak{so}}
\def\i{{\sf i}}
\def\'{\textrm{''}}
\def\sn{\smallskip\noindent}           
\def\ms{\medskip\noindent}
\def\beq{\begin{equation}}
\def\eeq{\end{equation}}
\def\bea{\begin{eqnarray}}
\def\eea{\end{eqnarray}}
\def\bsm{\left(\begin{smallmatrix}}
\def\esm{\end{smallmatrix}\right)}
\def\bpm{\begin{pmatrix}}
\def\epm{\end{pmatrix}}
\begin{document}

\title[Bott-Thom isomorphism, Hopf bundles and Morse theory]{Bott-Thom isomorphism, Hopf bundles \\ and Morse theory}
\dedicatory{Dedicado à memória de Manfredo do Carmo}
\author{Jost-Hinrich Eschenburg}
\author{Bernhard Hanke}
\address{Institut f\"ur Mathematik, Universit\"at Augsburg, D-86135 Augsburg, Germany}
\email{eschenburg@math.uni-augsburg.de}
\email{hanke@math.uni-augsburg.de}
\thanks{The second named author was supported by the SPP 2026 {\em Geometry at Infinity} funded by the DFG}
 
\begin{abstract} Based on  Morse theory for the energy functional on path spaces   we develop a deformation theory for mapping spaces of  spheres into orthogonal groups. 
This is used to show that these mapping spaces are weakly homotopy equivalent, in a stable range, to mapping spaces associated to orthogonal Clifford representations. 

Given an oriented Euclidean bundle $V \to X$ of rank divisible by four over a finite complex $X$ we derive a stable decomposition result for vector bundles over  the sphere bundle $\SS( \R \oplus V)$  in terms of vector bundles and Clifford module bundles over $X$.

After passing to topological K-theory these results imply classical Bott-Thom isomorphism theorems.
\end{abstract}

\keywords{Vector bundles, path space, Morse theory, centrioles, Hopf bundles, Atiyah-Bott-Shapiro map, Thom isomorphism}
\subjclass[2010]{Primary: 53C35, 15A66 , 55R10;  Secondary: 55R50, 58E10, 58D15}

\date{\today}
\maketitle

\tableofcontents

\section{Introduction}
In their seminal paper on Clifford modules \cite{ABS} Atiyah, Bott and Shapiro describe a far-reaching interrelation between the representation theory of Clifford algebras  and topological K-theory.
This point of  view inspired Milnor's exposition \cite{M} of Bott's proof of the periodicity theorem for the homotopy groups of the orthogonal group. 
The unique flavor of Milnor's approach is that a very peculiar geometric structure (centrioles in symmetric spaces) which is related to algebra (Clifford representations) leads to basic results in topology, via Morse theory on path spaces.

In the paper at hand we rethink Milnor's approach and investigate how far his methods can be extended. 
In fact, they allow dependence on arbitrary many extrinsic local parameters. 
Thus we may replace the spheres in Milnor's computation of homotopy groups by sphere bundles over any finite CW-complex.
Among others this leads to a geometric perspective of Thom isomorphism theorems in topological K-theory. 

This  interplay of algebra, geometry and topology is characteristic for the mathematical thinking of Manfredo do Carmo.
We therefore believe that our work may be a worthwhile contribution to his memory. 

Recall that a Euclidean vector bundle $E$ of rank $p$ over a sphere $\SS^n$ can be described by its {\it clutching map} $\phi : \SS^{n-1}\to \SO_p$. 
In fact, over the upper and lower hemisphere  $E$ is the trivial bundle  $\underline{\R}^p$, and $\phi$ identifies the two fibers $\R^p$ along the common boundary $\SS^{n-1}$ as in the following picture. 

\ms\hskip 3.5cm
\includegraphics{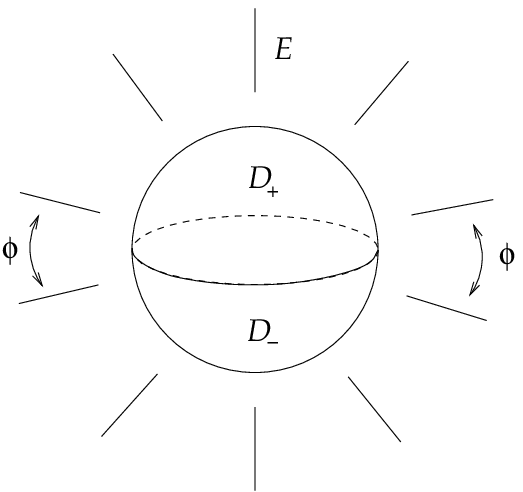}

\medskip
Milnor in his book on Morse theory \cite{M} describes a deformation procedure that can be used to simplify these clutching maps $\phi$.
The main idea in \cite{M} is viewing the sphere as an iterated suspension and the map $\phi$ as an iterated path family in $\SO_p$ with prescribed end points, and then Morse theory for the energy functional on each path space is applied.
However in a strict sense, Morse theory is not applied but {\em avoided}: it is shown that the non-minimal critical points (geodesics) have high index, so they do not obstruct the deformation of the path space onto the set of minima (shortest geodesics) via the negative gradient flow of the energy. 
Thus the full path space is deformed onto the set of shortest geodesics whose midpoint set can be nicely described in terms of certain totally geodesic submanifolds $P_j$ (``centrioles''). 
In fact there is a chain of iterated centrioles $\SO_p\supset P_1\supset P_2\supset\cdots$ such that the natural inclusion of $P_{j}$ into the path space of $P_{j-1}$ is $d$-connected for some large $d$ and for all $j$ (that is, it induces an isomorphism in homotopy groups $\pi_k$ for $k<d$ and a surjection on $\pi_d$). 
This is sufficient for Milnor's purpose to understand the topology of the path spaces in order to compute the stable homotopy groups of $\SO_p$ (Bott periodicity). 

In \cite{EHa} we went one step further and deformed the whole map $\phi$ into a special form: the restriction of a certain linear map $\phi_o : \R^n \to \R^{p\x p}$.
The latter defines a module structure on $\R^p$ for the Clifford algebra $\Cl_{n-1}$, which turns the given bundle $E$ into the Hopf bundle for this Clifford module. 
As shown in \cite{EHa} this leads to  a conceptual proof of  \cite[Theorem (11.5)]{ABS}, expressing the coefficients of  topological K-theory in terms of Clifford representations, and thus  gives a positive response to the remark in \cite[page 4]{ABS}: ``It is to be hoped that Theorem (11.5) can be given a more natural and less computational proof''. 

In the present paper we will put this deformation process into a family context, aiming at a description  of vector bundles over sphere bundles in terms of Clifford representations. 
More specifically, let $V \to X$ be a Euclidean vector bundle over a finite CW-complex $X$ and let $\hat V = \SS(\R \oplus V) \to X$ be the sphere bundle of the direct sum bundle $\R \oplus V$. 
It is a sphere bundle with two distinguished antipodal sections $(\pm 1, 0)$. 
Similar as before a vector bundle $\mathscr{E} \to \hat V$ can be constructed by a fiberwise clutching function along the ``equator spheres'' $\SS(V)$ in each fibre, and one may try to bring this clutching function into a favorable shape by a fiberwise deformation process similar as the one employed in \cite{EHa}.

We will realize this program if $V$ is oriented and of rank divisible by four in order to derive bundle theoretic versions of classical Bott-Thom isomorphism theorems in topological K-theory.
For example let $\rk V = 8m$ and assume that $V\to X$ is equipped with a spin structure.
Let $\mathscr{S} \to \hat V$ be the spinor Hopf bundle associated to the chosen spin structure on $V$ and the unique (ungraded)  irreducible $\Cl_{8m}$-representation, compare Definition \ref{spinorhopf}.
Then each vector bundle $\mathscr{E} \to\hat V$  is  -- after addition of trivial line bundles and copies of $\mathscr{S}$ --  isomorphic to a bundle of the form $E_0\oplus ( E_1\otimes \mathscr{S})$,  where $E_0,E_1$ are vector bundles over $X$. 
Moreover the stable isomorphism types of $E_0$ and $E_1$ are determined by the stable isomorphism type of $\mathscr{E}$, see Remark \ref{83}.
In K-theoretic language this amounts to the classical Thom isomorphism theorem in orthogonal K-theory, compare part (a) of Theorem \ref{ThomClassic}.

Atiyah in his book on K-theory \cite[p.\,64]{A} proved an analogous statement for complex vector bundles $\mathscr{E} \to \hat L$ where $L\to X$ is a Hermitian line bundle and $\hat L =\P(\C\oplus L) = \SS(\R \oplus L) \to X$ is the complex projective bundle with fibre  $\CP^1 =  \SS^2$. 
Now the clutching map of $\mathscr{E}$ is defined on the circle bundle $\SS(L)$ and can be described fiberwise by Fourier polynomials with values in $\Gl_p(\C)$.
Then the higher Fourier modes are removed by some deformation on $\Gl_p(\C)$ after enlarging $p = {\rm rk}(\mathscr{E})$;  only the first (linear) Fourier mode remains.
In our case of higher dimensional $\SS^n$-bundles, Fourier analysis is no longer available.
However, using Milnor's ideas, we still can linearize the clutching map along every fibre. 
But linear maps from the sphere to $\SO_p$ are nothing else than Clifford representations (cf.\ Proposition \ref{clifrep}).

Our paper is organized as follows. 
In Section \ref{cliffordr} we recall some notions from the theory of Clifford modules. 
Section \ref{centrioles} relates the theory of Clifford modules to iterated centrioles in symmetric spaces. 
This setup, which implicitly underlies the argument  in \cite{M}, provides a convenient and conceptual frame for our later arguments. 

A reminder of the Morse theory of the energy functional on path spaces in symmetric spaces  is provided in Section \ref{paths} along the lines in \cite{M}. 
This is accompanied by some explicit index estimates for non-minimal geodesics in Section \ref{index}. 
Different from  \cite{M} we avoid curvature computations using totally geodesic spheres instead.

After these preparations  Section \ref{defmapspace} develops a  deformation theory for pointed mapping spaces $\Map_*(\SS^k , \SO_p)$, based on an iterative use of Morse theory on path spaces in symmetric spaces. 
When $\R^p$ is equipped with a $\Cl_k$-representation, $\Map_*(\SS^k , \SO_p)$ contains the subspace of affine Hopf maps associated to Clifford sub-representations on $\R^p$ (compare Definitions \ref{HopfCliff}, \ref{hopfrho}). 
Our Theorem \ref{mindef} gives conditions under which this inclusion is highly connected. 

Section \ref{spinor} recalls the construction of vector bundles over sphere bundles by clutching data and provides some examples.
The central part of our work is Section \ref{difficult}, 
where we show that if $V \to X$ is an oriented vector bundle of rank divisible by four,
then vector bundles $\mathscr{E} \to  \hat V$ are, after stabilization, sums of bundles which arise from $\Cl(V)$-module bundles over $X$ by the clutching construction and  bundles pulled back from $X$. 
We remark that up to this point our argument is not using topological K-theory. 

The final Section \ref{CliffordThom} translates the results of Section \ref{difficult} into a K-theoretic setting and derives the $\Cl(V)$-linear Thom isomorphism theorem  \ref{clthom} in this language. 
This recovers  Karoubi's Clifford-Thom isomorphism theorem \cite[Theorem IV.5.11]{KaroubiBook} in the special case of oriented vector bundles  $V \to X$ of rank divisible by four.
In this respect we provide a geometric approach to this important result, which is proven in \cite{KaroubiBook} within the theory of Banach categories; see  Discussion \ref{Discussion} at the end of our paper for more details. 
Together with the representation theory of Clifford algebras it also implies the classical Thom isomorphism theorem for orthogonal K-theory.
Finally, for completeness of the exposition we mention the analogous periodicity theorems for unitary and symplectic K-theory, which are in part difficult to find in the literature. 

\section{Recollections on Clifford modules} \label{cliffordr}

Let $(V, \<\ ,\ \>)$ be a Euclidean vector space. 
Recall that the Clifford algebra $\Cl(V)$ is the $\R$-algebra generated by all elements of $V$ with the relations $vw + wv = -2\<v,w\>\, 1$ for all $v,w\in V$, or equivalently, for any orthonormal basis $(e_1,\dots,e_n)$ of $V$,
\beq \label{clifford}
	e_ie_j + e_je_i = -2\delta_{ij} \, .
\eeq
For $V = \R^n$ with the standard Euclidean structure we write $\Cl_n := \Cl(\R^n)$.

Let $\K \in \{ \R , \C, \H\}$. 
An (ungraded)  {\em $\Cl(V)$-representation} is a  
$\K$-module $L$ together with a homomorphism of $\R$-algebras  
\[
       \rho : \Cl(V) \to \End_{\K}(L) \, . 
\]
In other words, $L$ is a $\Cl(V) \otimes \K$-module. 
We also speak of {\em real, complex}, respectively {\em quaternionic} $\Cl(V)$-representations. 

Let  $(e_1, \ldots, e_n)$ be an orthonormal basis of $V$ and put $J_i = \rho(e_i)$. 
Due to \eqref{clifford}  these are anticommuting $\K$-linear complex structures on $L$, that is $J_i^2 = -I$ and $J_iJ_k = -J_kJ_i$ for $i\neq k$. 
We also speak of  a {\em Clifford family} $(J_1, \ldots, J_n)$. 
This implies that all $J_i$ are orientation preserving (for $\K = \C , \H$ this already follows from $\K$-linearity). 

In the following we restrict to real $\Cl(V)$-modules; for complex or quaternionic $\Cl(V)$-modules similar remarks apply. 
We may choose an inner product on $L$ such that $J_i \in \SO(L)$; equivalently all $J_i$ are skew adjoint. 
In this case we also speak of an {\em orthogonal} $\Cl(V)$-representation. 
With the inner product 
\[
    \langle A, B \rangle := \frac{1}{\dim L} \tr(A^T \circ B) 
\]
on $\End(L)$ this implies $J_i \perp J_k $ and $J_i \perp \id_L$ for $ 1 \leq i \neq k \leq n$.
In particular we obtain an isometric linear map  $ \R \oplus V  \to \End(L) $,
\[
     (t, v) \mapsto  t \cdot \id_{L} + \rho(v) \, . 
\]
By the previous remarks it sends the unit sphere $\SS(\R \oplus V) \subset \R \oplus V$ into the special orthogonal group $\SO(L) \subset \End(L)$.

\begin{defn} \label{HopfCliff} 
We call the restriction 
\[
    \mu: \SS(\R \oplus V) \to \SO(L)
\]
the {\em Hopf map} associated to the orthogonal Clifford representation $\rho$. 
\end{defn} 

Isometric linear maps $\R \oplus V \to \End(L)$ are in one-to-one correspondence with isometric embeddings $\SS(\R \oplus V) \to \SS( \End(L))$ onto great spheres.
This leads to the following geometric characterization of Clifford representations.

\begin{prop} \label{clifrep}
Let $L$ be a Euclidean vector space and let 
\[
     \mu : \SS(\R \oplus V)  \to  \SS(\End(L))
 \]
 be an isometric embedding as a great sphere, which satisfies 
 \[
      \mu  ( \SS(\R \oplus V))  \subset \SO(L) \, , \quad  \mu(1,0) = \id \, . 
 \]
 Then $\mu$ is the Hopf map of an orthogonal Clifford representation $\Cl(V) \to \End(L)$. 
\end{prop}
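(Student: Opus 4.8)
The plan is to reduce the statement to a short computation involving one-parameter great circles. First I would invoke the one-to-one correspondence, recalled just above, between isometric linear maps $\R\oplus V\to\End(L)$ and isometric embeddings of $\SS(\R\oplus V)$ onto great spheres of $\SS(\End(L))$: since $\mu$ is such an embedding, it is the restriction to $\SS(\R\oplus V)$ of a unique linear isometry $\Phi\colon\R\oplus V\to\End(L)$, whose image is the linear span of $\mu(\SS(\R\oplus V))$. Because $\Phi(1,0)=\mu(1,0)=\id_L$, while $\Phi$ preserves inner products and $\{0\}\oplus V$ is orthogonal to $\R\oplus\{0\}$, the linear map $\rho:=\Phi(0,\,\cdot\,)\colon V\to\End(L)$ is isometric onto its image and takes values in the orthogonal complement of $\id_L$. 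Hence $\Phi(t,v)=t\cdot\id_L+\rho(v)$, and it suffices to prove that $\rho$ is an orthogonal Clifford representation, since then $\mu$ is by construction its Hopf map in the sense of Definition \ref{HopfCliff}.

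Next I would fix a unit vector $v\in\SS(V)$ and set $A:=\rho(v)$. For every real $\theta$ the element $(\cos\theta)\,\id_L+(\sin\theta)\,A=\Phi\bigl(\cos\theta,(\sin\theta)v\bigr)$ is a unit vector of $\End(L)$ lying in the image of $\Phi$, hence it belongs to $\mu(\SS(\R\oplus V))$ and therefore to $\SO(L)$; in particular it is orthogonal:
\[
\bigl((\cos\theta)\,\id_L+(\sin\theta)\,A\bigr)^{T}\bigl((\cos\theta)\,\id_L+(\sin\theta)\,A\bigr)=\id_L .
\]
Differentiating this identity at $\theta=0$ gives $A+A^{T}=0$, and substituting $A^{T}=-A$ back into it yields $-A^{2}=\id_L$. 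Thus $\rho(v)$ is skew-adjoint with $\rho(v)^{2}=-\id_L$ for every unit vector $v$, hence $\rho(v)^{2}=-|v|^{2}\id_L$ for all $v\in V$ by homogeneity; polarizing via $v\mapsto v+w$ then produces the Clifford relations $\rho(v)\rho(w)+\rho(w)\rho(v)=-2\<v,w\>\id_L$.

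By the universal property of $\Cl(V)$ this means $\rho$ extends uniquely to an $\R$-algebra homomorphism $\Cl(V)\to\End(L)$, and since $\rho(e_i)=\Phi(0,e_i)\in\SO(L)$ for the vectors $e_1,\dots,e_n$ of an orthonormal basis of $V$, the representation is orthogonal. Its Hopf map is, by Definition \ref{HopfCliff}, the restriction to $\SS(\R\oplus V)$ of $(t,v)\mapsto t\cdot\id_L+\rho(v)=\Phi(t,v)$, which is $\mu$, completing the argument. I do not anticipate a genuine obstacle here; the only point needing care is that it is membership in $\SO(L)$ of the \emph{entire} great circle through $\id_L$ and $A$ — rather than of the single point $A$ — that forces $A$ to be skew-adjoint with $A^{2}=-\id_L$, since knowing merely $A\in\SO(L)$ would be far too weak.
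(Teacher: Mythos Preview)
Your proof is correct and follows essentially the same approach as the paper: extend $\mu$ linearly to $\Phi$, then use that the image lies in $\R\cdot\SO(L)$ to derive the Clifford relations. The only stylistic difference is that the paper works with an arbitrary pair $A,B$ in the image and computes $(A+B)^T(A+B)=t\cdot\id$ algebraically to obtain $A^TB+B^TA=2\langle A,B\rangle\,\id$ in one stroke, whereas you run the one-parameter great circle through $\id$ and $A$, differentiate to get skew-adjointness, and then polarize; both routes are equally short and yield the same conclusion.
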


\begin{proof} 
By assumption $\mu$ is the restriction of a linear map $\R \oplus V \to \End(L)$. 
If $A,B \in {\rm image}(\mu) \subset \SO(L)$, then $A+B \in \R \cdot \SO(L)$ by assumption, hence 
\[
   (A+B)^T \cdot (A+B) = t \cdot \id 
\]
for some  $t \in \R$. 
Furthermore 
\[
    (A+B)^T \cdot (A+B) = A^T A + B^T B +  A^T B + B^T A = 2 \cdot I  + A^T B + B^T A \, , 
\]
and hence 
\[
   A^T B + B^T A = s \cdot \id 
\]
with $s = t-2$. 

Taking the trace on both sides we have $s = 2\langle A,B \rangle$. 
If $A = I$ and $B \perp I$, this implies $B + B^T = 0$ . 
For $A , B \perp \id$ we hence get the Clifford relation $AB + BA = -2\langle A,B \rangle \cdot \id$.
\end{proof} 
 
\medskip The structure of the real representations of $\Cl_n$ is well known (cf.\ \cite[p.\,28]{LM}). 
They are direct sums of irreducible representations $\rho_n$.
These are unique and faithful when $n \not\equiv 3\mod 4$.
Otherwise there are two such $\rho_n$, which are both not faithful and differ by an automorphism of $\Cl_n$.
The corresponding modules $S_n$ and algebras $C_n := \rho_n(\Cl_n)$ are as follows. 

\begin{thm} {\rm (Periodicity theorem for Clifford modules)} \label{perclif}
\beq \label{Mk}
\begin{matrix}
	n & 0 & 1 & 2 & 3 & 4 & 5 & 6 & 7 & 8 	\cr
	S_n &\R&\C&\H&\H&\H^2&\C^4&\O&\O&\O^2   \cr
	s_n & 1& 2& 4& 4& 8  & 8  & 8& 8& 16	\cr
	C_n &\R&\C&\H&\H&\H(2) &\C(4)&\R(8)&\R(8)&\R(16)
\end{matrix}\
\begin{matrix} 
	8+k \cr \ \O^2\otimes S_k \cr 16s_k\cr\ \R(16) \!\otimes\!C_k
\end{matrix}
\eeq 
where $s_n = \dim S_n$.
Here $\K(p)$ denotes the algebra of $(p\x p)$-matrices over $\K$.
For $n=3$ and $n=7$, the two different module structures on $S_n =\K$ for $\K = \H,\O$  are generated by the left and the right multiplications, respectively, with elements of  the ``imaginary'' subspace  $\R^n = \Im\K =\R^\perp\subset\K$. 

The action of $(x, \xi)\in\R^{k+8} = \R^k \oplus \O$ on  $S_{k+8} = \O^2\otimes S_k = (\O\otimes S_k)^2$  is given by 
	$$
	\rho_{k+8}(x,\xi) = \bpm x & -L(\bar\xi)^T \cr L(\xi) & -x \epm 
	$$
where $L(\xi) = L(\xi)\otimes \id_{S_k}$ denotes the left translation on $\O$,
and where $x \in \R^k \subset \Cl_k$ acts on $S_k = 1\otimes S_k$ by $\rho_k$. 
\end{thm}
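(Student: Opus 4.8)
This is a classical result (Atiyah--Bott--Shapiro \cite{ABS}, Lawson--Michelsohn \cite{LM}); the plan is to give a self-contained account organised around the normed division algebras $\C,\H,\O$, which in particular produces the explicit octonionic formula in the statement. The first step is to settle the cases $n\le 8$ directly. One has $\Cl_0=\R$, $\Cl_1=\R[e_1]/(e_1^2+1)\cong\C$, and $\Cl_2\cong\H$: sending $e_1,e_2$ to left multiplication $L_u,L_v$ on $\H$ by an orthonormal pair of imaginary quaternions gives anticommuting skew involutions, and $\langle 1,L_u,L_v,L_uL_v\rangle\subset\End_\R(\H)$ is a copy of $\H$ by a dimension count, so $S_2=\H$, $s_2=4$, $C_2=\H$. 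For $n=3$ the volume element $\omega=e_1e_2e_3$ is central with $\omega^2=+1$, hence $\Cl_3$ splits as the product of the two subalgebras on which $\omega$ acts by $\pm1$, each $\cong\H$; the two irreducible modules $S_3=\H$ are left, respectively right, multiplication by imaginary quaternions, a Clifford family because $\H$ is associative ($L_u^2=L_{u^2}=-|u|^2\,\id$ and $L_uL_v+L_vL_u=L_{uv+vu}=0$ for $u\perp v$). The remaining cases $n=4,\dots,7$ are handled by analogous block constructions on $\H^2,\C^4,\O,\O$ -- for $n=7$ using the seven imaginary octonion units, which form a Clifford family now by the \emph{alternative laws} $L_u^2=L_{u^2}$ and their polarisation $L_uL_v+L_vL_u=L_{uv+vu}$ rather than by associativity -- in each case verifying that the generated subalgebra and the module have the dimension and isomorphism type recorded in \eqref{Mk}, and that for $n=7$ the volume element $e_1\cdots e_7$ acts as $+\id$ on the left-multiplication module and as $-\id$ on the right-multiplication one.

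The case $n=8$ is the hinge, and I would obtain it from the formula in the statement with $k=0$, $S_0=\R$, $\O\otimes S_0=\O$. This gives a linear map $\rho_8\colon\R^8\to\End_\R(\O^2)$, $\rho_8(0,\xi)=\bpm 0 & -L(\bar\xi)^T\\ L(\xi) & 0\epm$. Squaring the block matrix, the off-diagonal entries drop out and the two diagonal entries each reduce to a product of two octonionic multiplication operators which, by the alternative laws and the norm identity of $\O$, equals $|\xi|^2\,\id_\O$; hence $\rho_8(0,\xi)^2=-|\xi|^2\,\id$, and $\rho_8$ is a Clifford family. A dimension count, $\dim_\R\Cl_8=2^8=256=\dim_\R\R(16)$, together with irreducibility of $\O^2$ as a $\Cl_8$-module, identifies $\rho_8$ with an isomorphism $\Cl_8\cong\R(16)=\End_\R(\O^2)$; so $S_8=\O^2$, $s_8=16$, $C_8=\R(16)$.

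The periodicity step repeats this computation verbatim with an arbitrary $\Cl_k$-module $S_k$ in place of $\R$. The formula in the statement defines a linear map $\rho_{k+8}\colon\R^{k+8}\to\End_\R\bigl((\O\otimes S_k)^2\bigr)$ whose entries $\rho_k(x)$ act on the tensor factor $S_k$ and whose entries $L(\xi),L(\bar\xi)^T$ act on the factor $\O$. Since these act on different tensor factors, all cross terms in $\rho_{k+8}(x,\xi)^2$ vanish, and what is left is exactly the $n=8$ computation together with the Clifford relations already established for $\rho_k$; thus $\rho_{k+8}$ is a $\Cl_{k+8}$-representation on a space of real dimension $16\,s_k$. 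Checking irreducibility gives $S_{k+8}=\O^2\otimes S_k$ and $s_{k+8}=16\,s_k$, and reading off the image subalgebra from the formula gives $C_{k+8}\cong\R(16)\otimes C_k$, where $\R(16)\otimes\K(m)\cong\K(16m)$. Finally, for $k\equiv3\bmod4$ the two inequivalent choices of $\rho_k$ manifestly produce the two inequivalent choices of $\rho_{k+8}$, which together with the $n=7$ analysis yields the stated left/right descriptions for $n=3$ and $n=7$.

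The routine part of all this is the bookkeeping with the small Clifford algebras and the dimension counts. The delicate point, and the one to be careful about, is that every manipulation with octonions must be justified by an alternative (or Moufang) identity rather than by associativity: the reduction of $\rho_{k+8}(x,\xi)^2$ to $-(|x|^2+|\xi|^2)\,\id$ rests on such an identity for a product of two octonionic left-multiplication operators, and one also has to confirm that the representations constructed this way are irreducible (so that they are indeed the modules $S_n$) and faithful precisely for $n\not\equiv3\bmod4$, the obstruction in the remaining case being the behaviour of the central volume element $e_1\cdots e_n$.
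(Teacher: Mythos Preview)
The paper does not prove this theorem at all: it is stated as a classical fact with a reference to \cite[p.\,28]{LM}, and the explicit periodicity formula for $\rho_{k+8}$ is simply recorded without verification. So there is no ``paper's own proof'' to compare your attempt against.

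Your sketch is a reasonable outline of the standard argument, organised (as the paper's statement invites) around the normed division algebras. A few remarks on correctness. First, the verification that $\rho_{k+8}(x,\xi)^2 = -(|x|^2+|\xi|^2)\,\id$ hinges on the identity $L(\bar\xi)L(\xi) = |\xi|^2\,\id$, which follows from alternativity applied to $\bar\xi(\xi a) = (\bar\xi\xi)a$; you correctly flag this. Note however that since $L(\eta)^T = L(\bar\eta)$ in any composition algebra, one has $L(\bar\xi)^T = L(\xi)$, and with this substitution the matrix as literally written does \emph{not} square to a scalar (one gets $-L(\xi^2)$ on the diagonal). The intended off-diagonal entry is $-L(\bar\xi)$, i.e.\ $-L(\xi)^T$, which makes the block matrix skew-adjoint and gives the right square; you should be alert to this when carrying out the computation. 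Second, your treatment of $n=4,5,6$ is entirely verbal (``analogous block constructions''); these cases do require separate checks and are not mere bookkeeping. Third, the irreducibility and faithfulness claims you invoke at the end are genuine content --- for instance, irreducibility of $\O^2\otimes S_k$ under $\Cl_{k+8}$ is most cleanly obtained from the algebra isomorphism $\Cl_{k+8}\cong\Cl_8\otimes\Cl_k\cong\R(16)\otimes\Cl_k$ rather than checked directly, and you should say how you establish that isomorphism (e.g.\ via $\Cl_{p+q}\cong\Cl_p\,\hat\otimes\,\Cl_q$ and the ungraded identification for $p=8$).
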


\section{Poles and centrioles} \label{centrioles}

Clifford modules bear a close relation to the geometry of symmetric spaces. 
Let $P$ be a Riemannian symmetric space: for any $p\in P$ there is an isometry $s_p$ of $P$ which is an involution having $p$ as an isolated fixed point.
Two points $o,p\in P$ will be called {\it poles}  if $s_p = s_o$. 
The notion was coined for the north and south pole of a round sphere, but there are many other spaces with poles; e.g.\ $P = \SO_{2n}$ with $o = I$ and  $p = -I$, or the Grassmannian $P=\G_n(\R^{2n})$ with $o = \R^n$ and $p = (\R^n)^\perp$. 
Of course, pairs of poles are mapped onto pairs of poles by isometries of $P$.

A geodesic $\gamma$ connecting poles $o = \gamma(0)$ and $p = \gamma(1)$  is reflected into itself at $o$ and $p$ and hence it is closed with period $2$.

\ms
\hskip 4.5cm
\includegraphics[scale=.7]{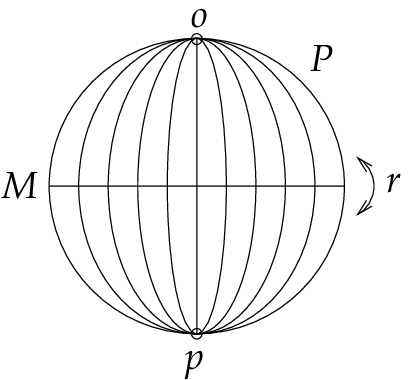}

\ms Now we consider the {\it midpoint set} $M$ between poles $o$ and $p$,
$$
  M = \{m=\gamma\bsm \underline 1 \cr 2 \esm:  
  \gamma\textrm{ shortest geodesic in $P$ with }\gamma(0) = o,\ \gamma(1) = p\}.
$$
For the sphere $P=\SS^n$ with north pole $o$, this set would be the equator. 
In general, $M$ need not be connected, but it is still the fixed point set of a reflection (order-two isometry) $r$ on $P$.\footnote{There is a covering $\pi : P \to \check P$ with $\pi(p) = \pi(o)$ where $\check P = \{s_p:p\in P\} \subset Iso(P)$, and $r = \gamma \o s_o$ where $\gamma$ is the deck transformation of $\pi$ with $\gamma(o) = p$.}
Hence the connected components of $M$, called {\it (minimal) centrioles} \cite{CN}, are totally geodesic subspaces of $P$ -- otherwise short geodesic segments $\sigma$ in the ambient space $P$  with end points in a component of $\Fix(r)$ would not be unique:

\ms
\hskip 4cm
\includegraphics{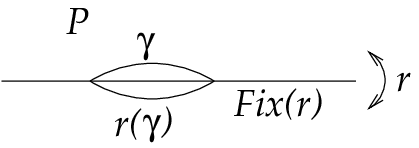}

\ms
Each such midpoint $m = \gamma(\2)$ determines its geodesic $\gamma$ uniquely, and thus the set of minimal geodesics can be replaced with $M$:  if there is another geodesic $\tilde\gamma$ from $o$ to $p$ through $m$, it can be made shorter by cutting the corner at $m$, thus it is not minimal: \label{corner}

\ms\hskip 4cm
\includegraphics{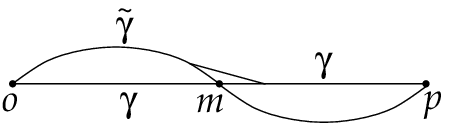}

\medskip There are chains of minimal centrioles (centrioles in centrioles):
\beq \label{P}
	P \supset P_1 \supset P_2 \supset \cdots 
\eeq 
Peter Quast \cite{Q1,Q2} classified all such chains starting from a compact simple Lie group  $P = G$ with at least 3 steps. 
The result is \eqref{table1} below.
The chains 1,2,3 are introduced in Milnor's book \cite{M}.

\beq \label{table1}
\begin{matrix}
\textrm{No.} & G & P_1 & P_2 & P_3 & P_4 & \textrm{restr.}	\cr
\hline
1 & ({\rm S})\Or_{4n} & \SO_{4n}/\U_{2n} & \U_{2n}/\Sp_n & \G_m(\H^n) & \Sp_m & m=\nhalf \cr
2 & ({\rm S})\U_{2n} & \G_n(\C^{2n}) & \U_n & \G_m(\C^n) & \U_m & m=\nhalf \cr
3 & \Sp_n & \Sp_n/\U_n & \U_n/\O_n & \G_m(\R^n) & \SO_m & m=\nhalf \cr
4 & \Spin_{n+2} & {\rm Q}_n & (\SS^1\!\x\!\SS^{n-1})/\!\pm &\SS^{n-2} & \SS^{n-3} & n\geq3 \cr
5 & {\rm E}_7 & {\rm E}_7/(\SS^1{\rm E}_6) & \SS^1 {\rm E}_6/ {\rm F}_4 & \O\P^2 & - \cr
&&
\end{matrix}
\eeq

\noindent
By $\G_m(\K^n)$ we denote the Grassmannian of $m$-dimensional subspaces in $\K^n$ for $\K \in \{\R,\C,\H\}$. 
Further, ${\rm Q}_n$ denotes the complex quadric in $\C\P^{n+1}$,  which is isomorphic to the real Grassmannian $\G^+_2(\R^{n+2})$ of oriented 2-planes in $\R^{n+2}$,  and $\O\P^2$ is the octonionic projective plane ${\rm F}_4/\Spin_9$.

A chain is extendible beyond $P_k$ if and only if $P_k$ contains poles again. 
E.g.\ among the Grassmannians $P_3=\G_m(\K^n)$  only those of half dimensional subspaces ($m = \frac{n}2$) enjoy this property: Then $(E,E^\perp)$ is a pair of poles for any $E\in\G_{n/2}(\K^n)$,  and the corresponding midpoint set is the group $\Or_{n/2}, \U_{n/2}, \Sp_{n/2}$ since its elements are the graphs of orthogonal  $\K$-linear maps $E\to E^\perp$, see figure below.

\hskip 4cm
\includegraphics{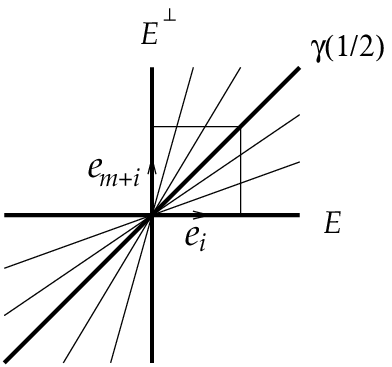}

For compact connected matrix groups $P=G$ containing $-I$, there is a linear algebra interpretation for the iterated minimal centrioles $P_j$. We only consider classical groups.

\begin{thm} \label{thmcentrioles} 
Let $L=\K^p$ with $\K\in\{\R,\C,\H\}$ and $G\in\{\SO_p,\U_p,\Sp_p\}$ with $p$ even in the real case.
Then a chain of minimal centrioles 
\[
  G \supset P_1\supset\dots\supset P_k
\]
 corresponds to a $\Cl_k$-representation $J_1,\dots,J_k$ on $L$ with $J_j\in G$, and each $P_j$ is the connected component through $J_j$ of the set
\beq \label{Pj}
	\hat P_j = \{J\in G: J^2=-I,\, JJ_i = -J_iJ  \textrm{ for } i<j\}.
\eeq
\end{thm}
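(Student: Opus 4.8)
The plan is to induct on the length $k$ of the chain, choosing compatible basepoints $J_1,\dots,J_k$ along the way so that $P_j$ becomes the connected component of $\hat P_j$ through $J_j$. I set $P_0=G$ and equip $G$ with a bi-invariant metric, so that the geodesics through $g_0\in G$ are the curves $t\mapsto\exp(tY)g_0$ with $Y\in\g$. For the base case $k=1$ I note that $-I\in G$ (this is where evenness of $p$ in the real case enters) and that $s_I(g)=g^{-1}=(-I)g^{-1}(-I)=s_{-I}(g)$, so $I$ and $-I$ are poles of $G$. A geodesic $t\mapsto\exp(tY)$ runs from $I$ to $-I$ iff $\exp Y=-I$, and diagonalizing the skew (resp.\ skew-Hermitian, quaternion-skew-Hermitian) endomorphism $Y$ shows $|Y|$ is minimal exactly when $Y^2=-\pi^2I$, i.e.\ $Y=\pi K$ with $K\in\g$, $K^2=-I$; for such $K$ the midpoint is $\exp(\tfrac{\pi}{2}K)=K$. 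Since in each of the three cases $\{K\in\g:K^2=-I\}=\{K\in G:K^2=-I\}$, the midpoint set of shortest geodesics from $I$ to $-I$ is precisely $\hat P_1$, and $P_1$ is one of its components; I choose $J_1\in P_1$ to be one of the two poles whose midpoint set has the next centriole $P_2$ as a component (using the homogeneity noted below).

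For the inductive step I assume $P_j$ is the component of $\hat P_j$ through $J_j$, that $(J_1,\dots,J_j)$ is a Clifford family in $G$, and that $J_j$ is one of the two poles of $P_j$ producing the given centriole $P_{j+1}$. The point symmetry of $G$ at $q$ is $g\mapsto qg^{-1}q$, which on $P_j$ (where $J^{-1}=-J$) reads $J\mapsto-qJq$; a short computation shows that the partner pole of $J_j$ must commute with $J_j$, be a complex structure, and — being an isolated fixed point of $J\mapsto-J_jJJ_j$ — must equal $-J_j$, so $-J_j\in P_j$. As $P_j$ is totally geodesic in $G$, its geodesics through $J_j$ are $\gamma(t)=\exp(tY)J_j$, and differentiating the equations defining $\hat P_j$ near $J_j$ shows that $\dot\gamma(0)=YJ_j$ is tangent to $\hat P_j$ exactly when $Y\in\mathfrak m_j:=\{Y\in\g:[Y,J_i]=0\ (i<j),\ YJ_j+J_jY=0\}$, while for such $Y$ the whole curve $\gamma$ stays in $\hat P_j$, hence in $P_j$. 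Such a $\gamma$ joins $J_j$ to $-J_j$ iff $\exp Y=-I$; decomposing $L$ into the eigenspaces of the negative-definite operator $Y^2$ (each preserved by every $J_i$, since $J_i$ commutes or anticommutes with $Y$) and replacing all angles by $\pi$ produces $\pi K\in\mathfrak m_j$ with $K^2=-I$ and $|\pi K|\le|Y|$, equality iff $Y=\pi K$. Hence the shortest such geodesics correspond bijectively to $K\in\mathfrak m_j$ with $K^2=-I$, with midpoint $\exp(\tfrac{\pi}{2}K)J_j=KJ_j$. A short calculation from $K^2=-I$, $[K,J_i]=0$, $KJ_j=-J_jK$ shows $J_{j+1}:=KJ_j\in\hat P_{j+1}$ and that $(J_1,\dots,J_{j+1})$ is again a Clifford family, while the map $K\mapsto KJ_j$ is inverted by $J'\mapsto-J'J_j$; therefore the midpoint set equals all of $\hat P_{j+1}$, and $P_{j+1}$ is one of its components, through $J_{j+1}$. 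Choosing $J_{j+1}$ to be a pole of $P_{j+1}$ adapted to the next centriole closes the induction, and reading the same dictionary backwards turns an arbitrary Clifford family $(J_1,\dots,J_k)$ in $G$ into a chain $G\supset P_1\supset\cdots\supset P_k$ of the asserted form.

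Two supporting facts I would treat as standard: the homogeneity of $\hat P_j$ — and hence of each of its components — under conjugation by the subgroup of $G$ centralizing $J_1,\dots,J_{j-1}$, which also identifies $T_{J_j}P_j$ with $\mathfrak m_jJ_j$ and lets me normalize the basepoints; together with the routine linear-algebra verifications (the Clifford relations for $KJ_j$, and the identification of the partner pole with $-J_j$). The step I expect to be the real obstacle is the minimal-length analysis inside $P_j$: one must show that the \emph{shortest} geodesics between the two poles satisfy the strong condition $Y^2=-\pi^2I$, not merely $\exp Y=-I$, and that the angle-rounding can be carried out without leaving the constraint space $\mathfrak m_j$. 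This is exactly what forces the midpoints to be honest complex structures and yields the clean identification of the midpoint set with $\hat P_{j+1}$.
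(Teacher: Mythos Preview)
Your overall inductive strategy matches the paper's, and your identification of the midpoint set with $\hat P_{j+1}$ via the bijection $K\mapsto KJ_j$ (with inverse $J'\mapsto -J'J_j$) is correct. The two proofs differ chiefly in how they establish this identification.

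You work through the tangent description $\mathfrak m_j$ and carry out an eigenvalue-rounding argument to show that shortest geodesics $J_j\leadsto -J_j$ in $P_j$ correspond exactly to $K\in\mathfrak m_j$ with $K^2=-I$; this works because each $J_i$ commutes with $Y^2$ (whether it commutes or anticommutes with $Y$), so the eigenspace decomposition respects all the constraints and the rounded $K$ stays in $\mathfrak m_j$. The paper instead exploits the one-parameter group law more directly: writing any geodesic in $P_j$ as $\gamma=\gamma_oJ_j$, one has $\gamma_o(\tfrac12)^2=\gamma_o(1)=-I$ \emph{automatically}, with no shortness assumption; combining this with $\gamma(\tfrac12)\in\hat P_j$ yields the anticommutation of $\gamma(\tfrac12)$ with $J_j$ in two lines. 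For the converse the paper notes that for $J\in\hat P_{j+1}$ the curve $t\mapsto\exp(t\pi J_o)J_j$ with $J_o=JJ_j^{-1}$ is a great circle in $G$ (hence globally shortest) and lies in $P_j$ by Lemma~\ref{JjeA}, bypassing any spectral analysis. The paper's route is shorter; yours is more explicit about exactly which geodesics are minimal.

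One place you go beyond the paper: you attempt to \emph{prove} that the partner pole of $J_j$ in $P_j$ must be $-J_j$. The paper does not do this --- it simply assumes $-J_j\in P_j$ (condition~\eqref{-Jj}) as the inductive hypothesis and remarks afterward that this is precisely what limits the length of the chain. Your argument here has a gap: from $s_p=s_{J_j}$ on $P_j$ one can deduce that $c:=-J_jp$ is an involution commuting with every element of $P_j$, but concluding $c=\pm I$ (hence $p=\mp J_j$) requires knowing that nothing nontrivial in $G$ centralizes all of $P_j$, which you have not established and which is not automatic in general.
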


\proof 
A geodesic $\gamma$ in $G$ with $\gamma(0) = I$ is a {\it one-parameter subgroup}, a Lie group homomorphism\footnote
	{Let $\gamma :\R\to G$ be a smooth group homomorphism. 
	Its curvature vector field $\eta=\nabla_{\gamma'}\gamma'$ is $\gamma$-invariant,
	$\eta(t) = \gamma(t)_*\eta(0)$. 
	When $\eta\neq 0$, the neighbor curve $\gamma_s(t) = \exp_{\gamma(t)}(s\eta(t))$ 		is another $\gamma$-orbit, $\gamma_s(t) = \gamma(t)g_s$. 
	Deforming in the curvature vector direction shortens a curve, thus $\gamma_s$ is 		shorter than $\gamma$ on any finite interval. 

	\sn
	\hskip 5cm \includegraphics{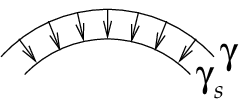}

	\noindent But on the other hand $\gamma_s = R(g_s)\gamma$ 
	has the same length as $\gamma$ since $R(g_s)$ is an isometry of $G$. 
	Thus $\eta = 0$.} 
$\gamma : \R \to G$. When $\gamma(1) = -I$, then $\gamma(\2) = J$ is a {\it complex structure}, $J^2 = -I$. 
Thus the midpoint set $\hat P_1$ is the set of complex structures in $G$. 

By induction hypothesis, we have anticommuting complex structures
$J_i\in P_i$ for $i\leq j$, and $P_j$ is the connected component through $J_j$
of the set $\hat P_j$ as in \eqref{Pj}. Suppose that also 
\beq \label{-Jj}
	-J_j\in P_j.
\eeq 
Consider a shortest geodesic
$\gamma$ from $J_j$ to $-J_j$ in $P_j$. Put $J = \gamma(\2)\in P_j$. Thus $J$ anticommutes
with $J_i$ for all $i<j$. It remains to show that $J$ anticommutes with $J_j$, too.
Since $P_j$ is totally geodesic, $\gamma$ is a geodesic in $G$, hence $\gamma = \gamma_oJ_j$ where $\gamma_o$ is a one-parameter group with $\gamma_o(1) = -I$ which again implies that $J_o := \gamma_o(\2)$ is a complex structure. But also $J\in P_j$ is a complex structure, $J^{-1} = -J$, and since $J = \gamma(\2) = J_oJ_j$, this means $J_jJ_o = -J_oJ_j$. Thus both $J_o$ and $J = J_oJ_j$  anticommute with $J_j$,  hence $J\in \hat P_{j+1}$ as defined in \eqref{Pj}. 

Vice versa, let $J\in \hat P_{j+1}$, that is $J\in G$ is a complex structure anticommuting with $J_1,\dots,J_j$. Then $J_o := JJ_j^{-1}\in G$ is a complex structure which anticommutes with $J_j$ and commutes with $J_i$, $i<j$. Further, from $J_o^{-1} = -J_o$ and $J_o^T = J_o^{-1}$
we obtain $J_o^T = -J_o$, 
thus $J_o \in G\cap\g\subset\End_{\K}(L)$ where $\g$ denotes the Lie algebra of $G$ (here we use 
$G = SO(L)\cap\End_{\K}(L)$).
Putting $\gamma_o(t) = \exp(t\pi J_o)$ we define a geodesic $\gamma_o$ in $G$ from $\gamma_o(0) = I$ to $\gamma_o(1) = -I$ via $\gamma(\2) = J_o$. 
In fact this is shortest in $G$, being a great circle in the plane spanned by $I$ and $J_o$.
Further, the geodesic $\gamma = \gamma_oJ_j$ from $J_j$ to $-J_j$  is contained in $P_j$, due to the subsequent Lemma \ref{JjeA} (applied to $A = \pi J_o$), and it is shortest in $P_j$ (even in the ambient space $G$). 
Thus $J$ is contained in the midpoint set of $(P_j,J_j)$. 
\endproof

\begin{lem} \label{JjeA}
Let $A\in\g$. Then $\exp(tA) J_j \in P_j$ for all $t\in\R$ if and only if
\beq \label{anticom}
	\textrm{$A$ anticommutes with $J_j$ and commutes with $J_i$ for $i<j$.}
\eeq
\end{lem}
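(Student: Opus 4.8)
The plan is to test membership in $P_j$ by testing membership in the larger set $\hat P_j$ of \eqref{Pj}, translate that into two identities valid for all $t\in\R$, and then pass between the one-parameter group $g_t:=\exp(tA)$ and its generator $A$ by differentiating at $t=0$ and, conversely, by exponentiating.

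First I would note that $\exp(tA)J_j\in P_j$ for all $t$ if and only if $\exp(tA)J_j\in\hat P_j$ for all $t$: one direction is immediate from $P_j\subset\hat P_j$, and for the other the curve $t\mapsto\exp(tA)J_j$ is continuous, so its image is a connected subset of $\hat P_j$ containing $J_j$ and therefore lies in the connected component of $J_j$, which is $P_j$. Since $A\in\g$ we have $g_t:=\exp(tA)\in G$, hence $g_tJ_j\in G$ automatically, and by \eqref{Pj} the condition $g_tJ_j\in\hat P_j$ is exactly: (i) $(g_tJ_j)^2=-I$, and (ii) $(g_tJ_j)J_i=-J_i(g_tJ_j)$ for all $i<j$.

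Next I would rewrite (i) and (ii) using the standing inductive data $J_j^2=-I$ and $J_jJ_i=-J_iJ_j$ for $i<j$. For (i): multiplying $(g_tJ_j)^2=-I$ on the right by $J_j^{-1}=-J_j$ gives the equivalent identity $g_tJ_jg_t=J_j$, i.e.\ $J_jg_tJ_j^{-1}=g_t^{-1}=g_{-t}$ for all $t$; differentiating at $t=0$ yields $J_jAJ_j^{-1}=-A$, that is, $A$ anticommutes with $J_j$, while conversely $AJ_j=-J_jA$ gives $J_jg_tJ_j^{-1}=\exp(tJ_jAJ_j^{-1})=\exp(-tA)=g_{-t}$, and hence (i). For (ii): since $J_j$ anticommutes with each $J_i$, $i<j$, we have $(g_tJ_j)J_i=-g_tJ_iJ_j$ and $-J_i(g_tJ_j)=-J_ig_tJ_j$, so (ii) is equivalent to $g_tJ_i=J_ig_t$ for all $t$ and all $i<j$; differentiating at $t=0$ gives $[A,J_i]=0$, and conversely $[A,J_i]=0$ yields $g_tJ_ig_t^{-1}=\exp(t\,\ad(A))(J_i)=J_i$, and hence (ii). Combining the two equivalences proves the lemma.

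The computation is routine; the one place that needs care is the passage from $P_j$ to $\hat P_j$ in the first step — the observation that a path issuing from $J_j$ and staying inside $\hat P_j$ cannot leave the connected component $P_j$ — together with keeping track that the manipulations in the last step genuinely use the facts $J_j^2=-I$ and $J_jJ_i=-J_iJ_j$ ($i<j$) inherited from the induction, which are precisely what puts conditions (i) and (ii) into commutator form.
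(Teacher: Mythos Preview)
Your proof is correct and follows essentially the same route as the paper's: both arguments unwind membership in $\hat P_j$ into the two conditions ``$g_tJ_j$ is a complex structure'' and ``$g_tJ_j$ anticommutes with $J_i$ for $i<j$'', and then show these are equivalent (via the one-parameter group) to $A$ anticommuting with $J_j$ and commuting with $J_i$. You are in fact slightly more careful than the paper, which glosses over the passage from $\hat P_j$ to $P_j$ in its final line and passes between group and Lie algebra without explicitly differentiating; your connectivity remark and the differentiate/exponentiate bookkeeping make these points explicit.
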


\proof \cite[p.\,137]{M} For generic $t\in\R$ we have: 
$A$ anticommutes with $J_j$ $\iff$ $J_j^{-1}\exp(tA)J_j = \exp(-tA)$ $\iff$ 
$\exp(tA)J_j$ $=  J_j\exp(-tA) = -(\exp(tA)J_j)^{-1}$\\ $\iff$ (A): $\exp(tA)J_j$\, is a complex structure.\\ 
Further, $A$ commutes with $J_i$ $\iff$ $J_i$ commutes with $\exp(tA)$\\ 
$\iff$  (B): $\exp(tA)J_j$ anticommutes with $J_i$ (with $i<j$).\\
(A) and (B) $\iff$ $\exp(tA)J_j\in P_{j}$.
\endproof

\begin{rem}
The proof of Theorem \ref{thmcentrioles} shows that the induction step can be carried through as long \eqref{-Jj} holds.
This condition limits the length $k$ of the chain of minimal centrioles.
\end{rem}

\section{Deformations of path spaces} \label{paths}

Minimal centrioles in $P$ are also important from a topological point of view: Since they represent the set of shortest geodesics from $o$ to $p$, they form tiny  models of the path space of $P$. 
This is shown using Morse theory of the energy function on the path space \cite{M}.

\begin{defn} \label{dconnected}
An inclusion $A\subset B$ of topological spaces is called {\em $d$-connected} for some $d\in\N$ if for any $k\leq d$ and any continuous map $\phi : (\D^k,\d \D^k) \to (B,A)$ there is a continuous deformation $\phi_t : \D^k \to B$, $0 \leq t \leq 1$, with 
\[
     \phi_0 = \phi \, , \quad  \phi_1(\D^k) \subset A \, , \quad \phi_t |_{ \partial \D^k} \text{ constant in } t. 
\]
This implies the same property for $(\D^k,\d\D^k)$ replaced by a finite CW-pair $(X,Y)$ with $\dim Y < \dim X \leq d$ by induction over the dimension of the cells in $X$ not contained in $Y$ and homotopy extension (see Corollary 1.4 in [\cite{Br}, Ch. VII]).
\end{defn}

Let $(P,o)$ be a pointed symmetric space and $p$ a pole of $o$. 
Let $\Omega := \Omega(P; o,p)$  be the space of all continuous paths $\omega : [0,1] \to P$ with $\omega(0) = o$ and $\omega(1) = p$ (for short we say $\omega : o\leadsto p$ in $P$), equipped with the compact-open topology (uniform convergence). 
Furthermore let $\Omega^0(P;o,p) \subset \Omega(P; o,p)$ be the subspace of shortest geodesics $\gamma: o\leadsto p$.
The following theorem is essentially due to Milnor \cite{M}.

\begin{thm} \label{short}
Let 
\[
    \Omega_* \subset \Omega(P; o, p)
\]
be a connected component of $\Omega(P; o, p)$  and 
\[
   \Omega^0_* \subset \Omega_* 
\]
be the subspace of minimal geodesics $\gamma : o \leadsto p$ in $\Omega_*$.
Let $d+1$ be the smallest index of all non-minimal geodesics $o\leadsto p$ in $\Omega_*$.

Then the inclusion $\Omega_*^0 \subset \Omega_*$ is $d$-connected. 
\end{thm}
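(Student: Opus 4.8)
The plan is to deduce this from the classical Morse theory of the energy functional $E$ on the path space $\Omega(P;o,p)$, as carried out in Milnor's book. First I would replace the space $\Omega_*$ of continuous paths by the homotopy equivalent space $\Omega_*^\dagger$ of piecewise broken geodesics with a fixed number of break points, or equivalently pass to a finite-dimensional approximation $\Omega_*^{(N)}$ by the spaces of broken geodesics $\omega$ whose restriction to each interval $[\tfrac{i}{N},\tfrac{i+1}{N}]$ is a minimal geodesic; the inclusion of this finite-dimensional manifold into $\Omega_*$ is a homotopy equivalence onto the corresponding component, and on it $E$ is a smooth function whose critical points are exactly the geodesics $o\leadsto p$ in $\Omega_*$, with the index of each critical point equal to its Morse index as a geodesic. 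Since $P$ is a symmetric space, all of this is standard (see \cite[\S\S16--17]{M}), and crucially the Morse index is \emph{finite}, the critical values form a discrete set, and $E$ satisfies condition (C) on each sublevel set so that negative gradient flow deformations exist.

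The heart of the argument is then a deformation retraction. Let $c$ denote the common energy of the minimal geodesics, so $\Omega_*^0 = E^{-1}(c)$ is the minimum set. By hypothesis, the next critical value $c' > c$ — the energy of the non-minimal geodesics — has all its critical points of index $\geq d+1$. I would argue, using the Morse inequalities / handle-attachment description, that passing a critical value of index $\geq d+1$ does not change $\pi_k$ for $k \leq d$: more precisely, for $\epsilon$ small the sublevel set $E^{\leq c+\epsilon}$ deformation retracts onto $\Omega_*^0$ (since there are no critical values in $(c,c+\epsilon]$ and $\Omega_*^0$ is the nondegenerate minimum, using that minimal geodesics in a symmetric space form a nondegenerate critical submanifold — Bott's theory of the energy function as a nondegenerate function in the sense of Bott), and the inclusion $E^{\leq c+\epsilon} \hookrightarrow \Omega_*$ is obtained by successively attaching cells of dimension $\geq d+1$ (one for each non-minimal critical point, or handles over critical submanifolds of the appropriate index), hence is $d$-connected. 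Composing, the inclusion $\Omega_*^0 \hookrightarrow \Omega_*$ is $d$-connected. Concretely, for a map $\phi:(\D^k,\partial\D^k)\to(\Omega_*,\Omega_*^0)$ with $k\leq d$, general position lets me push $\phi(\D^k)$ off the unstable manifolds of all higher-index critical points, and then the gradient flow of $-E$ carries it into $E^{\leq c+\epsilon}$ and thence onto $\Omega_*^0$, rel boundary.

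I expect the main obstacle to be the technical bookkeeping around the critical \emph{submanifold} $\Omega_*^0$ rather than isolated critical points: in a symmetric space the minimal geodesics from $o$ to the pole $p$ typically form a whole manifold (the centriole $M$, via $\gamma\mapsto\gamma(\tfrac12)$), so one is in the Morse--Bott setting and must invoke the analogue of the Morse lemma along $\Omega_*^0$ and check that its negative normal bundle is trivial of the expected rank — i.e. that $\Omega_*^0$ is itself a nondegenerate critical manifold. This is exactly what Milnor establishes for the relevant examples, so I would cite \cite[\S\S20--23]{M} for the fact that $\Omega_*^0$ is a nondegenerate critical submanifold and that the finite-dimensional approximation is valid; the remaining input, that cells/handles of index $\geq d+1$ do not affect $\pi_k$ for $k\leq d$, is elementary CW-topology (cellular approximation plus homotopy extension, as already invoked in Definition \ref{dconnected}). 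One should also double-check the edge case where $\Omega_*$ contains \emph{no} non-minimal geodesics, in which case $d=\infty$ and $\Omega_*^0\hookrightarrow\Omega_*$ is a homotopy equivalence, consistent with the statement.
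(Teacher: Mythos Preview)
Your proposal is correct and follows essentially the same route as the paper: finite-dimensional approximation by broken geodesics $\Omega_n$ (your $\Omega_*^{(N)}$), then use of the negative gradient flow of $E$ on a sublevel set, pushing a disk $\phi:\D^k\to\Omega_*$ with $k\leq d$ off the stable manifolds of the non-minimal critical points (which have codimension $\geq d+1$) so that it flows down to $\Omega_*^0$. The paper organizes this into two lemmas (the inclusion $\Omega_n\subset\Omega_*$ is $k$-connected for large $n$; the inclusion $\Omega_*^0\subset\Omega_n^c$ is $d$-connected), and is slightly more bare-handed in that it does not explicitly invoke Morse--Bott nondegeneracy of the minimum set, whereas you flag this as a point needing care; otherwise the arguments coincide.
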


We recall the main steps of the proof. 
The basic idea is using the {\em energy function}
\beq \label{energy}
	E(\omega) = \int_0^1 |\omega'(t)|^2 dt.
\eeq 
for each path $\omega\in\Omega_*$ which is $H^1$ (almost everywhere differentiable with square-integable derivative). 
Applying the gradient flow of $-E$  we may shorten all $H^1$-paths simultaneously to minimal geodesics.\footnote
	{When $\omega$ is reparametrized proportional to arc length 
	(which does not increase energy),
	we have $E(\omega) = |\omega'|^2 = L(\omega)^2$. 
	Thus $L$ is minimized when $E$ is minimized.}

Since the energy is not defined on all of $\Omega_*$,  we will apply this flow only on the subspace of {\em geodesic polygons} $\Omega_n\subset\Omega_*$ for large $n\in\N$ (to be chosen later),  where each such polygon $\omega \in \Omega_n$ has its vertices at $\omega(k/n)$, $k= 0,\dots,n$, and the connecting curves are shortest geodesics. 
For any $r\in\N$ with $n|r$ we have {$\Omega_n\subset\Omega_r$}. 
Furthermore $\Omega^0_* \subset \Omega_n$ for all $n$.

\begin{lem} \label{72}
For all $k \geq 0$ there exists an $n$ such that the inclusion $\Omega_n \subset  \Omega_*$ is $k$-connected.
\end{lem}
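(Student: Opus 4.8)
The plan is to use the standard finite-dimensional approximation of path space by geodesic polygons, exactly as in Milnor's book, and show that the approximation is highly connected by a dimension count. First I would fix a connected component $\Omega_*$ and recall that the space $\Omega_n$ of broken geodesics with vertices at the parameters $0, 1/n, 2/n, \dots, 1$ (each segment a shortest geodesic, which is well-defined and depends smoothly on its endpoints once consecutive vertices are within the injectivity radius) is a smooth, finite-dimensional manifold: it is parametrized by the $n-1$ interior vertices, so $\dim \Omega_n = (n-1)\dim P$, and moreover an open subset of $(\Omega_n, \Omega_*)$ deformation retracts onto it. More precisely, I would invoke \cite[\S16]{M}: there is a homotopy equivalence of pairs relating $\Omega_*$ to $\Omega_n$, and the inclusion $\Omega_n \hookrightarrow \Omega_*$ is a homotopy equivalence after restricting to sublevel sets of the energy. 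The point is that the negative gradient flow of $E$ on $\Omega_*$, suitably interpreted, pushes any compact family into an arbitrarily large sublevel set, and on each such sublevel set $\Omega_n$ for $n$ large is a deformation retract.

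Concretely, to prove $k$-connectedness I would argue as follows. Given $\phi : (\D^k, \partial \D^k) \to (\Omega_*, \Omega_n)$, its image is compact, hence contained in some energy sublevel set $\Omega_*^{c} = \{ \omega : E(\omega) \leq c\}$ (after first replacing $\phi$ by a homotopic map with image in the $H^1$-paths, which is possible since smooth paths are dense and the inclusion of smooth sublevel sets is cofinal). Now choose $n$ so large that $1/n$ is much smaller than the injectivity radius of $P$ and so that any $H^1$-path of energy at most $c$ has each of its length-$1/n$ subarcs contained in a geodesically convex ball; this is possible because the length of such a subarc is at most $\sqrt{c}/\sqrt{n}$ by Cauchy-Schwarz. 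For such $n$, Milnor's retraction $r : \Omega_*^c \to \Omega_n \cap \Omega_*^c$ (replace $\omega$ by the broken geodesic through its vertices $\omega(j/n)$, then flow down) is defined, continuous, and homotopic to the identity rel endpoints, with the homotopy staying inside $\Omega_*^c$. Composing $\phi$ with this homotopy gives the required deformation $\phi_t$ with $\phi_1(\D^k) \subset \Omega_n$ and $\phi_t|_{\partial\D^k}$ fixed (the boundary already lies in $\Omega_n$, where $r$ is the identity). Since $k$ was arbitrary, this proves the lemma; here I do not even need a dimension bound on $\Omega_n$, only that it is a deformation retract of a cofinal system of sublevel sets.

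The main obstacle I anticipate is purely a matter of bookkeeping rather than a genuine difficulty: one must be careful that the choice of $n$ depends on the compact family $\phi$ (through the energy bound $c$), which is exactly why the statement is phrased as ``for all $k$ there exists $n$'' rather than a uniform $n$ working for all $k$ at once. A second point requiring care is the passage from continuous paths to $H^1$-paths: the energy functional is only defined on the latter, so one must know that the inclusion of the space of piecewise-smooth (or broken-geodesic) paths into $\Omega_*$ is a weak homotopy equivalence, or at least that any map of a disk can be pushed into the piecewise-smooth paths without moving the boundary. This is standard (see \cite[\S16]{M}) and I would simply cite it. Everything else — convexity of small balls, continuity of the broken-geodesic retraction, the fact that it is the identity on $\Omega_n$ — is elementary Riemannian geometry on the compact symmetric space $P$.
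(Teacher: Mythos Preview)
Your approach is correct but takes a longer route than the paper's. The paper exploits the fact that $\Omega_*$ consists of merely \emph{continuous} paths with the compact-open topology: a map $\phi : \D^k \to \Omega_*$ is the same thing as a continuous map $\D^k \times [0,1] \to P$, which is automatically uniformly (equi-)continuous by compactness. Hence for $n$ large each path $\phi(x)$ sends every subinterval $[\tfrac{j-1}{n},\tfrac{j+1}{n}]$ into a convex ball of radius $R$, the geodesic polygon $\phi_1(x)$ through the vertices $\phi(x)(j/n)$ is well defined, and the straight-line homotopy along unique shortest geodesics connects $\phi$ to $\phi_1$ rel boundary. No energy, no $H^1$, no preliminary deformation into piecewise-smooth paths is needed.

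Your argument instead first pushes $\phi$ into $H^1$-paths, invokes an energy bound $c$ on the compact image, and then uses Cauchy--Schwarz to control the subarcs. This is exactly Milnor's textbook argument and it works, but the passage from continuous to $H^1$ paths (which you rightly flag as the delicate point) is a genuine extra step that the paper simply bypasses. The ``then flow down'' in your description of the retraction is also superfluous here: the replacement by broken geodesics already lands in $\Omega_n$, and the gradient flow only enters at the next stage (deforming within $\Omega_n^c$ to the minimum set). Both proofs end up with $n$ depending on the given $\phi$, which is consistent with how the lemma is actually used downstream.
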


\begin{proof} 
Let $\phi : \D^k \to \Omega^*$ with $\phi(\d \D^k) \subset \Omega_n$.
We consider $\phi$ as a continuous map $\D^k \x[0,1]\to P$. 
This is  equicontinuous by compactness of $\D^k \x[0,1]$. 

Let $R$ be the convexity radius on $P$, which means that for any $q\in P$ and any $q',q''\in B_R(q)$ the shortest geodesic between $q'$ and $q''$ is unique and contained in $B_R(q)$. 
By equicontinuity, when $n$ is large enough and $x\in \D^k$ arbitrary, $\phi(x)$ maps every interval $[\frac{k-1}{n},\frac{k+1}{n}]$  into $B_R(\phi(x,\frac{k}{n}))\subset P$, for $k = 1,\dots,n-1$. 

Let $\phi_1 : \D^k \to \Omega_n$ such that $\phi_1(x)$ is the geodesic polygon with vertices at $\phi(x)(\frac{k}{n})$ for $k=0,\dots,n$. 
Using the unique shortest geodesic between $\phi(x)(s)$ and $\phi_1(x)(s)$ for each $s\in [0,1]$, we define a homotopy $\phi_t$ between $\phi$ and $\phi_1$ with $\phi_t(x) = \phi(x)$ when $x \in \partial \D^k$. 
\end{proof} 

For $\omega\in\Omega_n$ let  $\omega_k = \omega|{[\frac{k-1}{n},\frac{k}{n}]}$  for $k=1,\dots,n$. 
Its length is $L(\omega_k) = \frac{1}{n}|\omega_k'|$,  its energy $E(\omega_k) =  \frac{1}{n}|\omega_k'|^2 = n\.L(\omega_k)^2$.
The distance between the vertices $\omega(\frac{k-1}{n}),\omega(\frac{k}{n})$  is the length of $\omega_k$ which is $\leq\sqrt{E(\omega)/n}$ since {$E(\omega) \geq E(\omega_k) = n \cdot L(\omega_k)^{2} $}.

For $c > 0$ let $\Omega_n^c = \{\omega\in\Omega_n: E(\omega)\leq c\}$.
We have $\Omega_*^0 = \Omega_n^{c_o}$ where $c_o$ is the energy of a shortest geodesic.
By continuity, $E\o\phi_1$ has bounded image, hence $\phi_1(X)\subset \Omega_n^c$ for some $c>0$. 
When $n$ is large enough, more precisely $n>c/R^2$, any two neighboring vertices of every  $\omega \in\Omega_n^c$ lie in a common convex ball, hence the joining  shortest geodesic segments are unique and depend smoothly on the vertices.

Thus we may consider $\Omega_n^c$ as the closure of an open subset of $P\x\cdots\x P$ ($(n-1)$-times) with its induced topology.

\begin{lem} \label{dconn} 
The inclusion $\Omega^0_* \subset \Omega_n^c$ is $d$-connected for $d$ as in Theorem \ref{short}. 
\end{lem}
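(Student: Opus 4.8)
The plan is to run finite-dimensional Morse theory on the space $\Omega_n^c$, which by the preceding discussion is the closure of an open subset of the $(n-1)$-fold product $P\times\cdots\times P$ and hence a finite-dimensional manifold with corners (or at least a space to which the standard Morse-theoretic handle-attachment picture applies). The energy function $E$ restricted to $\Omega_n^c$ is smooth, it is bounded below, and it satisfies the Palais–Smale condition because $\Omega_n^c$ is (relatively) compact; its critical points are exactly the geodesics $o\leadsto p$ lying in $\Omega_*$ with energy $\le c$, the minima being the shortest geodesics, i.e. the points of $\Omega_*^0$. The key input from the hypothesis of Theorem \ref{short} is that every non-minimal critical point has index $\ge d+1$. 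I would also invoke here the classical fact (Morse index theorem, as in \cite{M}) that the index of a geodesic as a critical point of $E$ on the broken-geodesic space $\Omega_n$ agrees with its index as a critical point of $E$ on the full path space $\Omega$, once $n$ is large enough, so that the ``index $\ge d+1$'' hypothesis transfers correctly to the finite-dimensional model.

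The main steps, in order: first, fix $k\le d$ and a map $\phi:(\D^k,\d\D^k)\to(\Omega_n^c,\Omega_*^0)$; I want to deform it rel $\d\D^k$ into $\Omega_*^0$. Second, perturb $E$ slightly (rel a neighborhood of its minimum set, keeping the perturbation supported away from $\phi(\d\D^k)$) to a Morse function $\tilde E$, still Palais–Smale, whose critical points in $\{\tilde E\le c\}$ near the old minimum set are unchanged and whose other critical points still have index $\ge d+1$. Third, apply the fundamental theorem of Morse theory: $\Omega_n^c$ is obtained from a neighborhood of $\Omega_*^0$ (a deformation retract of the sublevel set just above the minimal energy) by successively attaching cells of dimension $\ge d+1$, one for each non-minimal critical point, in order of increasing critical value. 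Fourth, use the standard cellular-approximation / general-position argument: a map of a $k$-disc with $k\le d$ into a space built from $\Omega_*^0$ by attaching cells of dimension $\ge d+1$ can be homotoped, rel the part of its boundary already in $\Omega_*^0$, off all those high-dimensional cells and hence into (a deformation retract of a neighborhood of) $\Omega_*^0$; finally compose with the deformation retraction of that neighborhood onto $\Omega_*^0$ itself. Since the construction is carried out without moving $\phi|_{\d\D^k}$, this is exactly the $d$-connectedness of the inclusion as in Definition \ref{dconnected}.

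I expect the main obstacle to be technical rather than conceptual: making the finite-dimensional Morse theory on $\Omega_n^c$ fully rigorous given that $\Omega_n^c$ is only the \emph{closure} of an open subset of a product of symmetric spaces, so it is a manifold-with-boundary/corners rather than a closed manifold, and the negative gradient flow of $E$ must be controlled near that boundary (one must check that near $\d\Omega_n^c$ the flow points inward, or enlarge $c$ slightly and use that minimizers stay in the interior, so that no spurious critical behavior is introduced at the boundary). A clean way around this is to avoid perturbing $E$ at all and instead quote directly the version of the fundamental theorem of Morse theory proved in \cite{M} for the energy on broken-geodesic spaces — Milnor handles precisely this setting — so that one only needs the index bound $d+1$ and the identification of the minimum set with $\Omega_*^0$; then the disc-pushing step is the routine general-position argument. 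The one remaining point requiring care is the compatibility of indices between $\Omega_n$ and $\Omega$ for the chosen $n$, which is where Lemma \ref{72} (choosing $n$ large) is used a second time.
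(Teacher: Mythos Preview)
Your proposal is correct and follows essentially the same line as the paper. The paper opts for what you call the ``clean way around'': it runs the negative gradient flow of $E$ on $\Omega_n^c$ directly (no perturbation), cites \cite[Lemma 15.4]{M} for the agreement of indices in $\Omega_n$ and $\Omega_*$, observes that the flow points inward at the boundary since $E$ decreases, and uses the dimension count (stable manifolds of non-minimal critical points have codimension $\geq d+1$) to push $\phi(\D^k)$ off their domains of attraction and down to $\Omega_*^0$; your cell-attachment plus cellular-approximation packaging is an equivalent formulation of the same mechanism.
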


\proof
Let $\phi : \D^k \to \Omega_n^c$, $0 \leq k \leq d$,  with $\phi(\d \D^k) \subset \Omega_*^0$.
The space $\Omega_n^c$ is finite dimensional and contains all geodesics  of length $\leq\sqrt c$ from $o$ to $p$. 
It is a closed subset of $P\x \cdots\x P$ ({$(n-1)$}-times), and its boundary points are the polygons in $\Omega_n$ whose energy takes its maximal value $c$. 
The gradient of $-E$ on $\Omega_n^c$ is a smooth vector field and its flow is smooth, too. 

The index of any geodesic is the same in $\Omega_*$ and $\Omega_n$, see \cite[Lemma 15.4]{M}. 
Hence the index of a non-minimal geodesic $\tilde\gamma$ in $\Omega_n$ (a ``saddle'' for $E$) is at least $d+1$,  and $\phi_n(\D^k)$ can avoid the domains of attraction for all these  geodesics. 
The energy decreases along the gradient lines starting on $\phi_n(\D^k)$, thus these curves avoid the boundary of $\Omega_n^c$ and they end up on the minimum set of $E$,  the set of minimal geodesics. 
Hence we can use this flow to deform  $\phi$ into some $\tilde\phi:\D^k\to\Omega_* ^0$ without changing $\phi|_{\d\D^k}$.
\endproof

\noindent
\hskip 4cm
\includegraphics{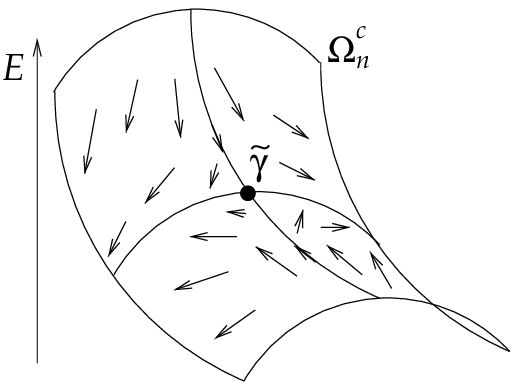} 

\begin{proof}[Proof of Theorem \ref{short}:] Let $\phi : \D^k \to \Omega_*$, $k \leq d$, with $\phi(\partial \D^k) \subset \Omega_*^0$.  
By Lemma \ref{72} we may (after a deformation which is constant on $\partial \D^k)$ assume  that $\phi(\D^k)\subset \Omega_n^c$ for some large $n$. 
Now the claim follows from Lemma \ref{dconn}. 
\end{proof}

\section{A lower bound for the index} \label{index}

How large is $d$ in Theorem \ref{short}? 
This has been computed in \cite[\S 23,24]{M} and \cite{EHa}.\footnote
	{A different argument using root systems was given by Bott \cite[6.7]{B} and in more
	detail by Mitchell \cite{Mt1,Mt2}} 
We slightly simplify Milnor's arguments replacing curvature computations by totally geodesic spheres. 
An easy example is the sphere itself, $P = \SS^n$. 
A non-minimal geodesic $\gamma$ between poles $o$ and $p$  covers a great circle at least one and a half times and can be shortened within any 2-sphere in which it lies (see figure below). 
There are $n-1$ such 2-spheres perpendicular to each other since the tangent vector  $\gamma'(0) = e_1$ (say) is contained in  $n-1$ perpendicular planes in the tangent space, 
$\SSpan(e_1,e_i)$  with $i\geq2$.
Thus the index is $\geq n-1$, in fact $\geq 2(n-1)$ since any such geodesic contains at least 2 conjugate points where it can be shortened by cutting the corner, see figure.

\ms\hskip3cm
\includegraphics{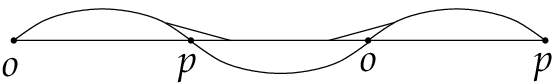}

\ms

For the groups $P = G = \SO_p,\U_p,\Sp_p$ ($p$ even) and their iterated centrioles we have
similar results.
The Riemannian metric on $G$ is induced from the inclusion $G\subset \End(\K^p)$
with $\K = \R,\C,\H$ respectively, where the inner product on $\K^{p\x p} = \End(\K^p)$ is
\beq \label{tracemet}
	\<A,B\> = \frac{1}{p}\Re\trace A^*B
\eeq
for any $A,B\in \K^{p\x p}$ where $A^* := \bar A^T$. In particular, $\<I,I\> = 1$.

\begin{prop} \label{SO-index}
The index of any non-minimal geodesic from $I$ to $-I$ in $\SO_p$   is at least $p-2$.
\end{prop}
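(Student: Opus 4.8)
The plan is to compute the geodesics from $I$ to $-I$ in $\SO_p$ explicitly, identify the minimal ones, and estimate the index of the rest by exhibiting enough totally geodesic $2$-spheres along which a non-minimal geodesic can be shortened, exactly as in the sphere case $P=\SS^n$ treated above. A geodesic of $\SO_p$ through $I$ is a one-parameter subgroup $\gamma(t)=\exp(tA)$ with $A\in\so_p$ skew-symmetric; $\gamma(1)=-I$ forces $\exp(A)=-I$, so $A$ is conjugate to a block-diagonal matrix with $2\times2$ blocks $\begin{pmatrix}0&-\theta_j\\\theta_j&0\end{pmatrix}$ where each $\theta_j\in\pi(2\Z+1)=\{\pm\pi,\pm3\pi,\dots\}$ (here $p$ is necessarily even). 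Parametrizing proportionally to arclength, the length squared, hence the energy, is $\frac1p\sum_j\theta_j^2$, which is minimized precisely when every $|\theta_j|=\pi$; these are the minimal geodesics, and $\gamma(\tfrac12)$ runs through the complex structures in $\SO_p$, matching $\hat P_1$ in Theorem \ref{thmcentrioles}.

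So let $\gamma(t)=\exp(tA)$ be non-minimal: after conjugation $A$ has at least one block with $|\theta_1|\geq3\pi$, say $A$ acts on $\SSpan(e_1,e_2)$ with speed $\theta_1$, $|\theta_1|\geq 3\pi$. The key observation is that for each index $i$ with $3\leq i\leq p$ the subgroup $\SO(\SSpan(e_1,e_2,e_i))\cong\SO_3$ is a totally geodesic submanifold of $\SO_p$, and $\gamma$ restricted there is (up to the trivial action on the remaining coordinates) a geodesic in $\SO_3\cong\RP^3$ from $I$ to $-I$ which winds at least $\tfrac32$ times around a closed geodesic; as in the sphere argument such a geodesic can be shortened within this $\SO_3$, and moreover it carries at least two conjugate points there. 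This produces $p-2$ mutually ``independent'' directions of shortening (the $2$-planes $\SSpan(\text{initial block directions}, e_i)$ for $i=3,\dots,p$ are pairwise perpendicular in $\so_p$ after identifying $\gamma'(0)$ with the block), hence $p-2$ linearly independent Jacobi fields along $\gamma$ vanishing at both endpoints and giving negative directions for the energy Hessian. Therefore the index is at least $p-2$.

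To make the index count rigorous I would use the standard index form $I(X,X)=\int_0^1\bigl(|X'|^2-\langle R(X,\gamma')\gamma',X\rangle\bigr)\,dt$ on vector fields $X$ along $\gamma$ vanishing at the endpoints: for each totally geodesic $\SO_3\ni\gamma$ above one gets, from the non-minimal winding, a subspace of such $X$ on which $I$ is negative definite, and these subspaces corresponding to distinct $i$ are mutually $I$-orthogonal because they live in complementary totally geodesic factors; summing the dimensions gives the bound. Alternatively, and perhaps more cleanly, one can invoke the Morse index theorem and simply count conjugate points with multiplicity along $\gamma$: the block of speed $\theta_1$ already contributes conjugate points of multiplicity $p-2$ (coming from the off-diagonal coordinates $(1,i)$ and $(2,i)$, $i\geq 3$) strictly inside $(0,1)$, because such a geodesic in $\SO_p$ has its first conjugate point at parameter $\pi/|\theta_1|\leq\tfrac13<1$.

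The main obstacle I anticipate is the bookkeeping of conjugate points and their multiplicities in $\SO_p$ — i.e.\ correctly reading off the Jacobi equation along a one-parameter subgroup in terms of the eigenvalues $\theta_j$ of $A$, and making sure the negative directions attached to different totally geodesic spheres are genuinely independent rather than overlapping. Once the reduction to totally geodesic $\SO_3\cong\RP^3$'s (or equivalently to $\SS^3$, and then to the $2$-sphere picture already established) is set up carefully, the estimate $\mathrm{index}\geq p-2$ follows from the sphere computation recalled at the start of this section together with the fact that each such subgroup is totally geodesic in $G=\SO_p$.
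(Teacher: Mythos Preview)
Your first approach has a genuine gap: the geodesic $\gamma$ does \emph{not} lie in (nor project cleanly onto) the subgroup $\SO(\SSpan(e_1,e_2,e_i))\cong\SO_3$. Since $\gamma(1)=-I$, every coordinate $e_i$ is moved, and in fact $e_i$ sits in a $2\times 2$ rotation block together with some partner $e_j$; the action of $\gamma(t)$ rotates $e_i$ into $e_j$ and hence does not preserve $\SSpan(e_1,e_2,e_i)$. Concretely, if $A$ has blocks of speed $\theta_1$ on $e_1,e_2$ and $\theta_2$ on $e_3,e_4$, then $[A,E_{13}-E_{31}]=\theta_1(E_{23}-E_{32})+\theta_2(E_{14}-E_{41})$, which lands outside $\so_3$. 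So the ``restriction'' you invoke is not well defined, and the negative directions you produce from different $\SO_3$'s are not independent of the other block speeds $\theta_k$ --- the $I$-orthogonality claim fails for the same reason.

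The fix, and what the paper does, is to pair blocks: for each \emph{even} $k\in\{4,\dots,p\}$ take the totally geodesic $\SO_4$ acting on $x_1,x_2,x_{k-1},x_k$, in which $\gamma$ genuinely has a factor $\gamma_1$. Then the isomorphism $\SO_4\cong(\SS^3\times\SS^3)/\pm$ turns $\gamma_1$ (with block speeds $3,1$ in units of $\pi$) into a pair of $\SS^3$-geodesics, one of which traverses a full great circle and therefore contributes index $2$. There are $(p-2)/2$ such even $k$, giving index $\geq p-2$. So the correct bookkeeping is $(p-2)/2$ copies of $\SO_4$, each worth index $2$, rather than $p-2$ copies of $\SO_3$ each worth $1$.

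Your alternative via the Morse index theorem is the right idea in principle (it is Milnor's original computation in Lemma~24.2), but the specific claim that the first conjugate point lies at $t=\pi/|\theta_1|$ with multiplicity $p-2$ is not correct: the $\mathrm{ad}(A)$-eigenvalues on the off-diagonal directions are $\theta_1\pm\theta_k$, not $\theta_1$ alone, so conjugate points occur at $t=2\pi m/|\theta_1\pm\theta_k|$ and depend on \emph{both} block speeds. Carrying this out carefully again forces you to work block-pair by block-pair, which brings you back to the $\SO_4$ picture.
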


\proof
A shortest geodesic from $I$ to $-I$ in $\SO_p$ is a product of $p/2$ half turns, planar rotations by the angle $\pi$ in $p/2$  perpendicular 2-planes in $\R^{p}$. 
A non-minimal geodesic must make  an additional full turn and thus a $3\pi$-rotation in at least one  of these planes, say in the $x_1x_2$-plane.
We project $\gamma$ onto a geodesic $\gamma_1$ in a subgroup $\SO_4 \subset \SO_{p}$ sitting in the coordinates $x_1,x_2,x_{k-1},x_{k}$, for any even $k\in\{4,\dots,p\}$. 
Then $\gamma_1$ consists of 3 half turns in the $x_1x_2$-plane together with (at least) one half turn in the  $x_{k-1}x_k$-plane. 

In the torus Lie algebra $\t$ of $\so_4$, which is 
$\R^2\cong \t =\left\{\bsm aJ \cr & bJ\esm: a,b\in\R\right\}$ for $J= \bsm & -1 \cr 1\esm$,
we have $\gamma_1(t) = \exp \pi t v$ with $v = \bsm 3J\cr & J\esm =\bsm 3 \cr 1\esm$ for short. 
Now we use $\SO_4 = L(\SS^3)R(\SS^3) \cong (\SS^3\x\SS^3)/\pm$, where the two $\SS^3$-factors correspond to the vectors $\bsm 1\cr 1\esm,\bsm 1\cr -1\esm\in\t$ 
(the left and right multiplications by $i\in\H$ on $\H=\R^4$). 
Decomposing $v$ with respect to this basis we obtain  $v = 2\bsm 1 \cr 1\esm + \bsm 1 \cr -1\esm$.
Thus the lift of $\gamma_1$ in $\SS^3\x\SS^3$ has two components: one is striding across a full great circle, the other across a half great circle. 
The first component passes  the south pole of $\SS^3$ and takes up index 2.
Since there are $(p-2)/2$ such coordinates $x_k$ the index of a non-minimal geodesic in $\SO_{p}$ is at least $p-2$ (compare \cite[Lemma 24.2]{M}).
\endproof

For $\U_p$ we have a different situation.  
To any path $\omega: I\leadsto -I$ we assign the closed curve $\det\omega : [0,1]\to \SS^1\subset\C$ (assuming $p$ to be even).
Its winding number (mapping degree) $w(\omega)\in\Z$ is obviously constant on each connected components of $\Omega(\U_p; I, -I)$.

\begin{defn} \label{wind}
We will call $w(\omega)$ the {\em winding number of $\omega$}.
\end{defn}

Any geodesic in $\U_p$ from $I$ to $-I$ is conjugate to $\gamma(t) = \exp(t\pi i D)$ where $D = \diag(k_1,\dots,k_p)$ with odd integers $k_i$, and $w(\gamma) = \2\sum_j k_j$. 
Let $k_j,k_h$ be a pair of entries with  $k_j>k_h$ and consider the  projection $\gamma_{jh}$ of $\gamma$ onto $\U_2$ acting on $\C e_j+\C e_h$.
Let $a = \2(k_j+k_h)$ and $b = \2(k_j-k_h)$. 
Then  
\[
   e^{-\pi i at}\gamma_{ij}(t)=\diag(e^{\pi i bt},e^{-\pi i bt}) \, . 
 \]
This is a geodesic in $\SU_2 \cong\SS^3$ which takes the value $\pm I$ when $t$ is a multiple of $1/b$. 
For $0 < t < 1$ there are $b-1$ such $t$-values, and at any of these points $\gamma_{ij}$ takes up index $2$. Thus
\beq \label{indUp}
	\ind(\gamma) \geq 2\sum_{k_j>k_h} \left(\2(k_j-k_h)-1\right).  
\eeq

\begin{prop} \label{U-index}
Let $\gamma : I\leadsto-I$ be a geodesic in $\U_p$ with $p-2|w(\gamma) |\geq 2d$ for some $d > 0$. 
Then $\ind(\gamma)> d$ unless $\gamma$ is minimal. 
\end{prop}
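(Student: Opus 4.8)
The plan is to squeeze the general lower bound \eqref{indUp} from the right, using only the constraint that $\gamma$ is non-minimal, i.e.\ that the diagonal $D=\diag(k_1,\dots,k_p)$ of odd integers is not of the form making $\gamma$ a shortest geodesic. First I would recall (as in \cite[\S 23]{M}) that minimal geodesics from $I$ to $-I$ in $\U_p$ with winding number $w=w(\gamma)=\tfrac12\sum_j k_j$ are exactly those whose entries $k_j$ all lie in $\{-1,+1\}$; if $q$ of them equal $+1$ and $p-q$ equal $-1$ then $w=q-(p-q)=2q-p$, so $w$ can be any integer of the same parity as $p$ with $|w|\le p$, and in the minimal case $\sum_j|k_j|=p$. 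Thus for \emph{any} geodesic $\sum_j|k_j|\ge p$, with equality precisely when $\gamma$ is minimal; when $\gamma$ is non-minimal we have $\sum_j|k_j|\ge p+2$ (parity forces an even jump).

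Next I would estimate the right-hand side of \eqref{indUp}. Write $S=\sum_j|k_j|$ and group the $k_j$ by sign: say $k_j>0$ for $j$ in a set of size $q$ with positive part summing to $A=\tfrac12(S+w)$, and $k_j<0$ for $j$ in a set of size $p-q$ with $\sum|k_j|=B=\tfrac12(S-w)$. The terms in $\sum_{k_j>k_h}(\tfrac12(k_j-k_h)-1)$ coming from one positive and one negative index dominate: each such pair contributes $\tfrac12(k_j-k_h)-1=\tfrac12(|k_j|+|k_h|)-1\ge \tfrac12\cdot 2 - 1\ge 0$, and more usefully, summing $\tfrac12(|k_j|+|k_h|)$ over all $q(p-q)$ mixed pairs gives $\tfrac12\big((p-q)A+qB\big)$, from which one subtracts $q(p-q)$. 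The cleanest route is to bound $\ind(\gamma)$ below by the contribution of mixed pairs with a \emph{fixed} positive index $j_0$ achieving $k_{j_0}\ge 3$ (such an index exists in the non-minimal case after possibly swapping the roles of positive and negative, since $S\ge p+2$ forces some $|k_j|\ge 3$): pairing $j_0$ with each of the $p-q$ negative indices yields $\ind(\gamma)\ge 2\sum_{h}\big(\tfrac12(k_{j_0}+|k_h|)-1\big)\ge 2(p-q)\cdot\big(\tfrac12(3+1)-1\big)=2(p-q)$, and similarly, using all positive indices paired with one negative index of absolute value $\ge 3$, $\ind(\gamma)\ge 2q$. Combining, $\ind(\gamma)\ge 2\max(q,p-q)\ge p$. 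Since $|w|\le p$ trivially and, in the presence of an entry of absolute value $3$, $|w|=|2q-p|$ is strictly less than $S/1$... — the point I actually need is the hypothesis $p-2|w(\gamma)|\ge 2d$: this gives $2d\le p-2|w|\le p\le \ind(\gamma)$, hence $\ind(\gamma)\ge 2d>d$.

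The one delicate point — and the step I expect to be the main obstacle — is making the reduction to "there is a positive index with $k_{j_0}\ge 3$, \emph{and} the count $p-q$ of opposite-sign indices is itself controlled in terms of $p-2|w|$." The crude bound $\ind(\gamma)\ge p$ above is wasteful and in fact is not quite what the hypothesis calls for when $w$ is large; the honest argument must track how a large $|w|$ forces most $k_j$ to share a sign, shrinking the number of mixed pairs, and simultaneously forces larger individual $|k_j|$, enlarging each summand. I would therefore set up the optimization directly: minimize $\sum_{k_j>k_h}(\tfrac12(k_j-k_h)-1)$ over all choices of odd integers $k_1\ge\cdots\ge k_p$ with $\tfrac12\sum k_j=w$ fixed and the configuration non-minimal, and show the minimum is $\ge \tfrac12(p-2|w|)$. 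By a standard exchange/convexity argument the minimizer has all $k_j\in\{-1,1,3\}$ (or the mirror $\{-3,-1,1\}$): pushing any $k_j\ge 5$ down by $2$ and compensating elsewhere does not increase the sum. For such a configuration with $r$ threes, $s$ ones, $t$ minus-ones ($r+s+t=p$, $3r+s-t=2w$, $r\ge 1$), a direct count of the mixed-sign pairs gives exactly $\sum_{k_j>k_h}(\tfrac12(k_j-k_h)-1)=rt+\tfrac{...}{}$ — a small explicit quadratic in $r,s,t$ — which one checks is $\ge \tfrac12(p-2w)$ for $w\ge 0$ (and symmetrically for $w\le 0$), with the worst case $r=1$. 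Feeding this into \eqref{indUp} yields $\ind(\gamma)\ge p-2|w|\ge 2d>d$, completing the proof.
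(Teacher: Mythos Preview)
Your overall strategy --- fix one index $j_0$ with $|k_{j_0}|\ge 3$ (which must exist when $\gamma$ is non-minimal) and pair it against all indices of the opposite sign in \eqref{indUp} --- is exactly the paper's approach. The gap is that you discard too much at the key estimate and then try to compensate with an unnecessary and unfinished optimization argument.

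Concretely: after fixing $k_{j_0}\ge 3$, you bound
\[
2\sum_{k_h<0}\Big(\tfrac12(k_{j_0}+|k_h|)-1\Big)\ \ge\ 2(p-q)\cdot\Big(\tfrac12(3+1)-1\Big)=2(p-q),
\]
using $|k_h|\ge 1$ term by term. This loses the information carried by $\sum_{k_h<0}|k_h|$. Keep that sum instead:
\[
2\sum_{k_h<0}\Big(\tfrac12(k_{j_0}+|k_h|)-1\Big)\ \ge\ \sum_{k_h<0}(1+|k_h|)\ \ge\ 1+k_-,
\]
where $k_-:=\sum_{k_h<0}|k_h|$. Now the hypothesis enters in one line: writing $k_+=\sum_{k_j>0}k_j$, from $k_+-k_-=2w$ and $k_++k_-\ge p$ one gets $2k_-\ge p-2w\ge p-2|w|\ge 2d$, so $k_-\ge d$ and $\ind(\gamma)\ge 1+k_->d$. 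The symmetric argument (some $k_h\le -3$, pair with all positives, use $k_+\ge d$) covers the remaining case. That is the entire proof.

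Your attempted bound $\ind(\gamma)\ge p$ is not justified: the ``similarly $\ind(\gamma)\ge 2q$'' step needs a negative entry of absolute value $\ge 3$, which need not exist (take $k=(3,1,1,1,-1,-1)$, where the bound from \eqref{indUp} is only $4<p=6$). The subsequent optimization over configurations with entries in $\{-1,1,3\}$ is therefore a detour; once you keep $k_-$ and observe $k_-\ge d$, nothing further is needed.
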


\proof 
Let $k_+$ be the sum of the positive $k_j$ and $-k_-$ the sum of the negative $k_j$. 
Then $k_+ - k_- = \sum_j k_j = 2w$ and $k_+ + k_-\geq p$.
Thus $2k_+\geq p+2w\geq p-2|w|$ and $2k_-\geq p-2w\geq p-2|w|$, and the assumption $p- 2|w| \geq 2d$  implies $k_\pm \geq d$.
If $k_j\geq 3$ or $k_h\leq -3$ for some $j,h$, then by \eqref{indUp} 
\[
     \ind(\gamma) \geq \sum_{k_h<0} (3-k_h-2) = 1+k_- > d 
\]
or 
\[
   \ind(\gamma) \geq \sum_{k_j>0} (k_j+3-2) = 1+k_+ > d \, . 
\]
At least one of these inequalities must hold, unless all $k_i\in\{1,-1\}$. 
In the latter case $\gamma$ is minimal: it has length $\pi$ which is the distance between $I$ and $-I$ in $\U_p$, being the minimal norm (with respect to the inner product \eqref{tracemet}) for elements of $(\exp|_\t)^{-1}(-I) = \{\pi i \diag(k_1,\dots,k_p): k_1,\dots,k_p \textrm{ odd}\}\subset\t$ where $\t$ is the Lie algebra of the maximal torus of diagonal matrices in $\U_p$.
\endproof

\medskip
Next we will show that similar estimates hold for arbitrary iterated centrioles $P_\ell$ of $\SO_p$. 
A geodesic $\gamma : J_\ell \leadsto -J_\ell$ in $P_\ell$ has the form
\beq \label{gamma} 
    \gamma(t) = e^{\pi t A}J_\ell \, , \quad t\in [0,1] \, , 
\eeq
for some skew symmetric matrix $A$, which commutes with $J_1,\dots,J_{\ell-1}$ and anticommutes with $J_\ell$, cf.\ \eqref{anticom}.
We split $\R^p$ as a direct sum of subspaces, 
\beq \label{Mj}
	\R^p = M_1\oplus\dots\oplus M_r
\eeq 
such that the subspaces $M_j$ are invariant under $J_1,\dots,J_\ell$ and $A$ 
and minimal with this property.
Then $A$ has only one pair of eigenvalues $\pm \i k_j$ on every $M_j$; otherwise we could split $M_j$ further.
Since $\exp(\pi A) = -I$, the $k_j$ are odd integers which can be chosen positive.

When all $k_j = 1$, then  $\gamma$ has minimal energy:   
$E(\gamma)=|\gamma'|^2 = |\pi A\gamma|^2 \buildrel \eqref{tracemet}\over = \pi^2$.  
In general, $J':=A/k_j$ is a complex structure on $M_j$, and $J_{\ell+1} := J_\ell J'$ is another complex structure on $M_j$ which anticommutes with $J_1,\dots,J_\ell$.
Hence each $M_j$ is an irreducible $\Cl_{\ell+1}$-module. 

The two cases (a) $\ell \neq 4m-2$ and (b) $\ell=4m-2$ have to be distinguished (cf. \cite[p.\ 144-148]{M}); they are similar to the previous cases of $\SO_p$ and $\U_p$, respectively. 
By (\ref{Mk}), all $M_j$ are isomorphic as $\Cl_\ell$-modules when $\ell=4m-2$ (Case (b)) and as $\Cl_{\ell+1}$-modules when $\ell \neq 4m-2$ (Case (a)).

\begin{prop} \label{neq4m-2} 
Suppose  $\underline{\ell\neq 4m-2}$. 
Let $s_{\ell+1}$ be the dimension of the irreducible $\Cl_{\ell+1}$-module $S_{\ell+1}$, see \eqref{Mk}. Then $s_{\ell+1}|p$\,, and all non-minimal geodesics $\gamma:J_\ell\leadsto-J_\ell$ in $P_\ell$ have index $> r-1$ with $r = p/s_{\ell+1}$.
\end{prop}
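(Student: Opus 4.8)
The plan is to reduce the index estimate to the model situation for a single irreducible summand by the same orthogonal-splitting trick used for $\SO_p$ in Proposition \ref{SO-index}, and then to count how many independent 2-spheres appear. First I would establish the divisibility $s_{\ell+1}\mid p$: we have just observed that each $M_j$ in the decomposition \eqref{Mj} carries anticommuting complex structures $J_1,\dots,J_{\ell+1}$, hence is a $\Cl_{\ell+1}$-module, and since $\ell\neq 4m-2$ all $M_j$ are isomorphic to copies of the irreducible module $S_{\ell+1}$; thus $p=\dim\R^p = r\cdot s_{\ell+1}$ with $r=p/s_{\ell+1}$. Here I would note that this uses the uniqueness part of Theorem \ref{perclif} together with the remark (made just before the statement) that in Case (a) the $M_j$ are isomorphic as $\Cl_{\ell+1}$-modules, not merely as $\Cl_\ell$-modules.

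Next, suppose $\gamma(t)=e^{\pi t A}J_\ell$ is non-minimal, so by the preceding discussion at least one eigenvalue-modulus $k_{j_0}$ of $A$ on some $M_{j_0}$ satisfies $k_{j_0}\geq 3$. Fix any other index $j\neq j_0$. I would then project $\gamma$ onto the totally geodesic subgroup of $P_\ell$ acting only on $M_{j_0}\oplus M_j$, and within that, onto the totally geodesic sphere obtained by pairing up the $k_{j_0}=3$ rotation direction in $M_{j_0}$ against a $k_j$-direction in $M_j$, exactly as in the $\SO_4=(\SS^3\x\SS^3)/\pm$ argument of Proposition \ref{SO-index}. On this totally geodesic $\SS^3$ (one of the two factors after decomposing $v=\bigl(\begin{smallmatrix}3\\1\end{smallmatrix}\bigr)=2\bigl(\begin{smallmatrix}1\\1\end{smallmatrix}\bigr)+\bigl(\begin{smallmatrix}1\\-1\end{smallmatrix}\bigr)$) the projected geodesic traverses a full great circle, passing through a pole, and so picks up index at least $2$ — and these index contributions are along pairwise orthogonal variation fields because the $M_j$ are mutually orthogonal. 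Since $j$ ranges over the $r-1$ indices different from $j_0$, the total index is at least $2(r-1)>r-1$, which is the asserted bound.

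The main obstacle I expect is bookkeeping the index contributions so that they are genuinely independent: one must check that the variation fields realizing index in the different blocks $M_{j_0}\oplus M_j$ are supported on orthogonal subspaces of $\R^p$ and that each projection is onto a \emph{totally geodesic} submanifold of $P_\ell$ (so that the index of the projected geodesic is a lower bound for the index of $\gamma$). The total-geodesicity is where the commutation relations \eqref{anticom} and Lemma \ref{JjeA} must be invoked carefully: the subspace of $P_\ell$ fixing the splitting $M_{j_0}\oplus M_j$ versus the rest is totally geodesic because it is a connected component of a fixed-point set of an isometric involution, and the matrix $A$ restricted to $M_{j_0}\oplus M_j$ still commutes with $J_1,\dots,J_{\ell-1}$ and anticommutes with $J_\ell$. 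A secondary subtlety is that, unlike the plain sphere, one should make sure the relevant 2-spheres inside $P_\ell$ are totally geodesic; this is handled by exhibiting them as orbits of the appropriate $\SO_3$ or $\SS^3$-subgroups, as in the $\SO_p$ case. Once these points are in place the counting is immediate, so I would keep the write-up short and lean on the analogy with Proposition \ref{SO-index}.
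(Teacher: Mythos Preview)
Your overall strategy---pair the non-minimal summand $M_{j_0}$ against each other $M_j$ and find a totally geodesic sphere on which the projected geodesic is long---matches the paper's. But the specific mechanism you propose does not work. You want to ``pair up the $k_{j_0}=3$ rotation direction in $M_{j_0}$ against a $k_j$-direction in $M_j$'' and literally reuse the $\SO_4=(\SS^3\times\SS^3)/\pm$ computation from Proposition~\ref{SO-index}. That argument lives on a $4$-dimensional coordinate subspace of $\R^p$, but here the $M_j$ are irreducible $\Cl_{\ell+1}$-modules of dimension $s_{\ell+1}$, and a $2$-plane inside $M_{j_0}$ (or a $4$-plane inside $M_{j_0}\oplus M_j$) is \emph{not} invariant under $J_1,\dots,J_\ell$. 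The variation fields you build in $\SO_4$ therefore violate the commutation relations \eqref{anticom} and do not lie tangent to $P_\ell$; they produce index for the ambient $\SO_p$, not for $P_\ell$. You label total-geodesicity a ``secondary subtlety'', but it is exactly where your plan breaks: there are no obvious $\SO_3$- or $\SS^3$-orbits in $P_\ell$ supported on $4$-dimensional subspaces once $\ell\geq 2$.

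The paper's fix is to keep the full blocks $M_j,M_h$ (each of dimension $s_{\ell+1}$) and to identify them as $\Cl_{\ell+1}$-modules \emph{after flipping the sign of $J_{\ell+1}$ on $M_h$}. With this identification $A|_{M_j+M_h}=\diag(k_jJ',-k_hJ')$, and, crucially, the swap matrix $B=\bsm 0&-I\\ I&0\esm$ on $M_j\oplus M_j$ now commutes with $J_1,\dots,J_\ell$. By Lemma~\ref{JjeA} the conjugation family $\gamma_u=e^{uB}\gamma e^{-uB}$ consists of geodesics \emph{inside $P_\ell$} from $J_\ell$ to $-J_\ell$, and they coalesce with $\gamma$ exactly when $t\in(1/b)\Z$ with $b=\tfrac12(k_j+k_h)\geq 2$; corner-cutting then gives the index contribution for each of the $r-1$ pairs. (Equivalently, $J_\ell$, $J'$, $BJ'$ span a totally geodesic Clifford $2$-sphere in $P_\ell$ on which $\gamma_{jh}$ covers at least three half great circles.) The $a,b$-splitting $a=\tfrac12(k_j-k_h)$, $b=\tfrac12(k_j+k_h)$ is the same algebra as your $\bsm 3\\1\esm$-decomposition, but performed on $s_{\ell+1}$-dimensional blocks rather than on $2$-planes; the sign flip on $J_{\ell+1}$ is the missing ingredient that makes this Clifford-compatible.
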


\proof
Choose any pair of submodules $M_j,M_h$ with $j\neq h$ in \eqref{Mj}. 
We may isometrically identify $M_j$ and $M_h$ as $\Cl_{\ell+1}$-modules, but first we modify the module structure on $M_h$ by changing the sign of $J_{\ell+1}$. 
In this way we view $M_j + M_h = M_j\oplus M_j$. Thus 
$$
	A|_{M_j+M_h} = \bpm k_j J' \cr & -k_h J'\epm = \bpm aJ'\cr & aJ'\epm + \bpm bJ'\cr & -bJ' \epm 
$$
for $a = \2(k_j-k_h)$, $b = \2(k_j+k_h)$. Let $\gamma(t)_{jh} := \gamma(t)|_{M_j+M_h}$. Then
\beq \label{etA}
	\gamma(t)_{jh} = (e^{t\pi A}J_\ell)_{jh} = 
	\bpm e^{t\pi aJ'} \cr & e^{t\pi aJ'}\epm\bpm e^{t\pi bJ'} \cr & e^{-t\pi bJ'}\epm
	\bpm J_\ell \cr & J_\ell\epm
\eeq
Put $B := \bsm &-I\cr I \esm$ on $M_j + M_h = M_j\oplus M_j$ and $B = 0$ on the other modules $M_k$ for $k\neq j,h$.
Then $B$ and $e^{uB}$ commute with $J_1,\dots,J_\ell$ for all $u\in\R$. 
Put $A_u = e^{uB}A e^{-uB}$. Then all geodesics 
$$
	\gamma_u(t)  = e^{uB}\gamma(t)e^{-uB} = e^{t\pi A_u}J_\ell
$$ 
are contained in $P_\ell$ by \eqref{anticom}.

The point $\gamma(t)$  is fixed under conjugation with the rotation matrix $e^{uB} = \bsm cI & -sI \cr sI & cI \esm$ on $M_j\oplus M_j$ with $c =\cos u$, $s=\sin u$  if and only if $e^{t\pi bJ'} = e^{-t\pi bJ'}$, see \eqref{etA}.
This happens precisely when $t$ is an integer multiple of $1/b$. 
If $k_h>1$, say $k_h \geq 3$, then $b =\2(k_j+k_h) \geq 2$ and $1/b \in (0,1)$.
All $\gamma_u$ are geodesics in $P_\ell$ connecting $J_\ell$ to $-J_\ell$. 
Using ``cutting the corner'' it follows that $\gamma$ can no longer be locally shortest beyond $t\!=\! 1/b$ , see figure below. 
If there is at least one eigenvalue $k_h > 1$, there are at least $r-1$ index pairs $(j,h)$ with $\2(k_j+k_h)\geq 2$,  hence by \eqref{indUp}, the index of non-minimal geodesics is at least $r-1$.\footnote
	{Another way of saying: $J_\ell$, $J'$ and $BJ'$ span a Clifford 2-sphere in $\SO(M_j+M_h)$
	anticommuting with $J_1,\dots,J_{\ell-1}$ and containing $\gamma_{jh}$ which covers at least three
	half great circles.}

\ms
\hskip 3.5cm
\includegraphics{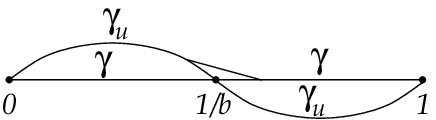}
\endproof

\bigskip\medskip
Now we consider the case  $\underline{\ell = 4m-2}$.
Then $J_o := J_1J_2\dots J_{\ell-1}$ is a complex structure\footnote
	{\label{volume} 
	Let $J_1,\dots,J_{n}$ be a Clifford family and
	put $w_n = J_1\cdots J_n$. Then 
	$w_n^2 = J_1\cdots J_nJ_1\cdots J_n = (-1)^{n-1}w_{n-1}^2J_n^2 = (-1)^nw_{n-1}^2$.
	Thus $w_n^2 = (-1)^sI$ with $s = n+(n-1)+\dots+1 = \2 n(n+1)$. 
	When $n-1$ or $n-2$ are multiples of $4$ 
	then $s$ is odd, hence $w_n^2 = -I$. Further,
	when $n$ is even (odd), $w_n$ anticommutes (commutes) with $J_1,\dots,J_n$. 
	If the Clifford family extends to $J_1,\dots,J_{n+1}$, then $w_n$
	commutes with $J_{n+1}$ when $n$ is even and anticommutes when $n$ is odd.}
which commutes with $A$ and $J_1,\dots,J_{\ell-1}$  and anticommutes with $J_\ell$ (since $\ell-1$ is odd).  
Thus $A$ can be viewed as a complex matrix, using $J_o$ as the multiplication by $\i$,
and the eigenvalues of $A$ have the form $\i k$ for odd integers $k$. 
As before, we split $\R^p$ into minimal subspaces $M_j$ which are invariant under $J_1,\dots,J_\ell$ and $A$. 
Let $E_{k_j} \subset M_j$ be a complex  eigenspace of $A|_{M_j}$ corresponding to an eigenvalue $\i k_j$.
Then $E_{k_j}$ is invariant under $J_1,\dots,J_{\ell-1}$ (which commute with $A$ and $\i$), and also under $J_\ell$ (which anticommutes with both $A$ and $\i = J_o$). 
By minimality we have $M_j = E_{k_j}$, hence $A = k_j J_o$ on $M_j$. 

Again  we consider two such modules $M_j,M_h$. 
As $\Cl_\ell$-modules they can be identified, $M_j+M_h = M_j\oplus M_j$, see \eqref{Mk}. 
This time, 
\[
   A|_{M_j+M_h} = \diag(k_j\i,k_h\i) = a\i I+\diag(b\i,-b\i)
\]
with $a = \2(k_j+k_h)$ and $b = \2(k_j-k_h)$.
Thus 
\beq \label{gammat}
\gamma(t) = e^{\pi t A}J_\ell =  e^{\pi ta\i}\diag(e^{\pi t\i}, e^{-\pi tb\i}) J_\ell.
\eeq 
Consider the linear map $B$ on $\R^p$ which is $\bsm &-I\cr I\esm$ on $M_j+M_h = M_j\oplus M_j$ and $B = 0$ elsewhere, and the family of geodesics 
$$
	\gamma_u(t)  = e^{uB}\gamma(t)e^{-uB} = e^{\pi t A_u}J_\ell,\ \ A_u = e^{uB}A e^{-uB}.
$$ 
Since $e^{uB}$ commutes with $J_1,\dots,J_\ell$, the first equality 
implies $\gamma_u$ in $P_\ell$, see \eqref{anticom}. 
Now $\gamma(t)$ is fixed under conjugation with the rotation matrix $e^{uB} = \bsm cI & -sI \cr sI & cI \esm$ with $c=\cos u$, $s=\sin u$ if and only if $e^{\pi tb\i} = e^{-\pi tb\i}$ which happens precisely when $t$ is an integer multiple of $1/b$.
When $b>1$, we obtain an  energy-decreasing deformation by cutting $b\!-\!1$ corners, see figure above. Thus the index of a geodesic $\gamma$ as in \eqref{gamma} is similar to  \eqref{indUp}:
\beq \label{indb}
	\ind(\gamma) \geq \sum_{k_j>k_h} \left(\2(k_j-k_h)-1\right).
\eeq
As before we need a lower bound for this number  when $\gamma$ is non-minimal.

Any $J \in P_\ell$ defines a $\C$-linear map $JJ_\ell^{-1} = -JJ_\ell$ since $JJ_\ell$ commutes with all $J_i$ and hence with $J_o$. 
This gives an embedding\footnote
	{This embedding $J\mapsto JJ_\ell^{-1}$ is equivariant: Let $J = gJ_\ell g^{-1}$  
	for some $g\in \SO_p$ preserving
	$P_\ell$, then $JJ_\ell^{-1} = g\tau(g)^{-1}$ where $\tau$ is the involution
	$g\mapsto J_\ell g J_\ell^{-1}$. 
	It is totally geodesic, a fixed set component of the isometry $\iota\o\tau$ with 
	$\iota(g) = g^{-1}$ since 
	$\iota(\tau(g\tau(g)^{-1})) = (\tau(g)g^{-1})^{-1} = g\tau(g)^{-1}$.
	In fact it is the Cartan embedding of $P_\ell$ into the group 
	$G_\ell = \{g\in \SO_p: gP_\ell g^{-1} 	= P_\ell\}$ .}
\begin{equation} \label{embeddingPU} 
	P_\ell \hra \U_{p/2} : J\mapsto JJ_\ell^{-1}
\end{equation} 
Note that $p$ is divisible by four  due to the representation theory of Clifford algebras \eqref{Mk}, since for $\ell = 4m-2$ the space $\R^p$ admits at least two anticommuting almost complex structures. 

\begin{defn} \label{w} 
For any $\omega \in \Omega:= \Omega(P_\ell; J_\ell,-J_\ell)$ we define its {\em winding number} 
$w(\omega)$ as the winding number of  $\omega J_\ell^{-1}$, considered as a path  in $\U_{p/2}$
from $I$ to $-I$, see Def.\ {\rm \ref{wind}}. In particular for a geodesic $\omega = \gamma$
with $\gamma(t) = e^{\pi t A}$ as in \eqref{gammat} we have $w = \2\dim_{\C}(S_\ell) \sum_j k_j$.
\end{defn}

\begin{prop} \label{4m-2}
Let $\ell = 4m-2$. 
Let $\gamma:J_\ell\leadsto-J_\ell$ be a geodesic in $P_\ell$ with winding number $w$ such that
\beq \label{p-2w}
	p-4|w|\geq 4d\.s_\ell
\eeq
for some $d > 0$ where $s_\ell = \dim_{\R}  S_\ell$, see \eqref{Mk}. Then $\ind(\gamma) > d$ unless $\gamma$ is minimal.
\end{prop}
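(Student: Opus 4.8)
The plan is to transport the argument of Proposition~\ref{U-index} to $P_\ell$ by means of the totally geodesic embedding \eqref{embeddingPU} into $\U_{p/2}$, feeding in the index estimate \eqref{indb} established above. Concretely, I would first invoke the normal form of $\gamma$ from the discussion preceding the proposition: writing $\R^p = M_1\oplus\cdots\oplus M_r$ as a sum of minimal $\{A,J_1,\dots,J_\ell\}$-invariant subspaces, each $M_j$ is an irreducible $\Cl_\ell$-module, hence --- since $\ell = 4m-2\not\equiv 3\bmod 4$ --- isomorphic to $S_\ell$ and of real dimension $s_\ell$, so that $p = r\,s_\ell$, and $A = k_jJ_o$ on $M_j$ with odd integers $k_j$. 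Reading $J_o$ as multiplication by $\i$ makes each $M_j$ a complex vector space of dimension $\dim_\C S_\ell = s_\ell/2$ on which $A$ acts as the scalar $\i k_j$; thus $\gamma(t)J_\ell^{-1} = e^{\pi tA}$ is a geodesic in $\U_{p/2}$ whose winding number is $w = \tfrac12\dim_\C(S_\ell)\sum_j k_j$, i.e.\ $\sum_j k_j = 4w/s_\ell$ (Definition~\ref{w}). As noted before the proposition, $\gamma$ is minimal exactly when all $|k_j| = 1$ (then $E(\gamma) = \pi^2\langle A,A\rangle = \pi^2\tfrac1r\sum_j k_j^2 = \pi^2$, the minimal value); hence for non-minimal $\gamma$ there is some $j_0$ with $|k_{j_0}|\geq 3$.

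Next I would count frequencies. Put $k_+ = \sum_{k_j>0}k_j\geq 0$ and $k_- = -\sum_{k_j<0}k_j\geq 0$. Since every $|k_j|\geq 1$ we have $k_++k_- = \sum_j|k_j|\geq r = p/s_\ell$, while $k_+-k_- = \sum_j k_j = 4w/s_\ell$. Adding, resp.\ subtracting, and invoking the hypothesis $p-4|w|\geq 4d\,s_\ell$ shows that $2k_+$ and $2k_-$ are both at least $(p-4|w|)/s_\ell\geq 4d$, so $k_+\geq 2d$ and $k_-\geq 2d$; in particular the positive and the negative frequency sets are both non-empty.

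Finally I would extract the index. Using the symmetry $(k_j,w)\mapsto(-k_j,-w)$ we may assume $k_{j_0}\geq 3$. For every $h$ with $k_h<0$ one has $k_{j_0}-k_h\geq 3+|k_h|$, hence $\tfrac12(k_{j_0}-k_h)-1\geq\tfrac12(1+|k_h|)$. Summing over these $h$ alone and discarding the remaining nonnegative terms of \eqref{indb} yields
\[
   \ind(\gamma)\;\geq\;\sum_{h:\,k_h<0}\Bigl(\tfrac12(k_{j_0}-k_h)-1\Bigr)\;\geq\;\tfrac12\bigl(\#\{h:k_h<0\}+k_-\bigr)\;\geq\;\tfrac12(1+2d)\;>\;d \, ,
\]
as asserted.

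The genuine inputs --- the corner-cutting estimate \eqref{indb} and the totally geodesic embedding \eqref{embeddingPU} --- are already in hand, so the work is essentially bookkeeping. The only place calling for care is keeping the factor $s_\ell$ and the identity $\dim_\C S_\ell = s_\ell/2$ consistent when translating the complex winding number into the real-dimensional hypothesis; and one should observe, trivial as it is, that restricting the sum in \eqref{indb} to the pairs $(j_0,h)$ is legitimate because all terms of \eqref{indb} are nonnegative (distinct odd integers differ by at least $2$) and these pairs are distinct.
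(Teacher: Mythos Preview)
Your proof is correct and follows essentially the same line as the paper's own argument: the same normal form and module decomposition, the same bookkeeping identities $k_++k_-\geq p/s_\ell$ and $k_+-k_-=4w/s_\ell$ yielding $k_\pm\geq 2d$, and then the same application of \eqref{indb} to the pairs $(j_0,h)$ with $k_h<0$ (resp.\ $k_h>0$). Your write-up is in fact slightly more careful, making explicit that the discarded terms in \eqref{indb} are nonnegative and that the lower bound is $\tfrac12(\#\{h:k_h<0\}+k_-)$ rather than the paper's $\tfrac12(1+k_-)$.
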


\proof 
Let $r$ be the number of irreducible $\Cl_\ell$-representations in $\R^p$,
that is $r = p/s_\ell$.
Let $k_+$ be the sum of the positive $k_j$ and $-k_-$ the sum of negative $k_j$. 
Then $k_+ + k_-\geq r = p/s_\ell$ and $k_+ - k_- = \sum_j k_j = 2w/\dim_{\C}S_\ell = 4w/s_\ell$.
Thus $2k_+\geq (p+4w)/s_\ell$ and $2k_-\geq (p-4w)/s_\ell$,
and our assumption $(p\pm 4w)/s_\ell \geq 4d$ implies $k_\pm \geq 2d$. 
If $k_j\geq 3$ or $k_h\leq -3$ for some $j,h$, then by \eqref{indb}
\[
     \ind(\gamma) \geq \sum_{k_h<0} \2(3-k_h-2) = \2(1+k_-) > d 
\]
or 
\[
   \ind(\gamma) \geq \sum_{k_j>0} \2(k_j+3-2) = \2(1+k_+) > d \, . 
\]
One of these inequalities holds unless all $k_i\in\{1,-1\}$ which means that $\gamma$ is minimal, see the subsequent remark.
\endproof

\begin{rem} \label{kpm}
Let still $\ell = 4m-2$.
The geodesic $\gamma_o(t) = e^{\pi t A }$ from $I$ to $-I$ is minimal in $U_{p/2}$ (with length $\pi$) if and only if $A$ has only eigenvalues $\pm\i$. 
In this case, when $\gamma=\gamma_oJ_\ell$  
lies inside $P_\ell$, the numbers $k_+$ of positive signs and $k_-$ of negative signs are determined by the above conditions $k_+ + k_- = p/s_\ell$ and $k_+ -k_-  = 4w/s_\ell$.
This corresponds to a $(J_1,\dots,J_\ell)$-invariant orthogonal splitting  (which in particular is complex linear with respect to $\i=J_1\cdots J_{\ell-1}$), 
\beq \label{V0V1}
	\R^p = \C^{p/2} = L_- \oplus L_+ \,,\ \ \dim_{\C} L_\pm  = k_\pm\,, 
\eeq 
with $A = \pm \i$ on $L_\pm$. 
Then $A^2 = -I$, and we obtain another complex structure $J_{\ell+1}  = AJ_\ell =\gamma(\2)$  anticommuting with $J_1,\dots,J_\ell$ and defining a minimal centriole $P_{\ell+1}\subset P_\ell$.
Note that 
\[
    J_{\ell+1} = \pm J_1 \cdots J_{\ell} \text{ on } L_\pm \,. 
\]
The property of $J_{\ell+1}$ being determined (up to sign) by $J_1, \ldots, J_\ell$ only appears for $\ell = 4m-2$.

\end{rem}

For later use we formulate a necessary and sufficient condition when this $\Cl_{\ell+1}$-representation can be extended to a $\Cl_{\ell+2}$-representation. 

\begin{prop} \label{Pell+2}
Let $\gamma(t) = e^{\pi t A}J_\ell$ be a minimal geodesic in $\Omega(P_\ell ; J_\ell, - J_\ell)$ with mid-point $J_{\ell+1}$ for $\ell = 4m-2$. 
Then the following assertions are equivalent. 
\begin{enumerate}[(i)] 
 \item {There exists a complex structure $J_{\ell+2}$ anticommuting with $J_1, \ldots, J_{\ell+1}$}, 
 \item $w(\gamma)  = 0$.
 \end{enumerate} 
\end{prop}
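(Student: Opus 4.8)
The plan is to make the equivalence explicit in terms of the splitting $\R^p = L_-\oplus L_+$ from Remark \ref{kpm}. Recall from that remark that minimality of $\gamma$ means $A$ has eigenvalues $\pm\i$ only, that $L_\pm$ is the $\pm\i$-eigenspace of $A$, that $\dim_\C L_\pm = k_\pm$, and that $k_+ + k_- = p/s_\ell$ while $k_+ - k_- = 4w/s_\ell$. In particular $w(\gamma)=0$ if and only if $k_+ = k_-$, i.e.\ $\dim_\C L_- = \dim_\C L_+$. The midpoint is $J_{\ell+1} = AJ_\ell$, which equals $\pm J_1\cdots J_\ell$ on $L_\pm$; note $J_{\ell+1}$ commutes with $J_o = J_1\cdots J_{\ell-1}=\i$ (since $\ell$ is even, $w_{\ell-1}$ commutes with the even-index family member $J_\ell$ in the relevant sense — more simply, $A$ and $J_\ell$ both were chosen compatible with $\i$). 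So a complex structure $J_{\ell+2}$ anticommuting with all of $J_1,\dots,J_{\ell+1}$ must in particular anticommute with $\i = J_o$ (as $J_o$ is a product of an odd number of the $J_i$, $i\le \ell-1$, hence $J_{\ell+2}$ anticommutes with it) and with $J_{\ell+1}$.

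First I would reduce the problem to linear algebra on $L_-\oplus L_+$. Since $J_{\ell+2}$ anticommutes with $\i = J_o$, it is conjugate-linear over $\C$; write it as an $\R$-linear map interchanging or preserving the $L_\pm$. Because $J_{\ell+2}$ anticommutes with $J_{\ell+1}$, and $J_{\ell+1}$ acts as the scalar $+J_1\cdots J_\ell$ on $L_+$ and $-J_1\cdots J_\ell$ on $L_-$ — two values differing by a sign — the anticommutation forces $J_{\ell+2}$ to map $L_+$ to $L_-$ and $L_-$ to $L_+$. (Here I would spell out: if $J_{\ell+2}$ preserved $L_+$, then on $L_+$ we would have $J_{\ell+2}$ anticommuting with the invertible operator $J_1\cdots J_\ell|_{L_+}$ while also $J_{\ell+2}^2=-I$, which is fine in general, so the real input is the interplay with $\i$: on $L_+$, $J_{\ell+1} = +\i^{?}$\dots) — more cleanly, I would use $J_{\ell+2} = (AJ_\ell)\cdot(\text{stuff})$ is not available, so instead argue that $J_{\ell+2}J_{\ell+1}$ commutes with $J_1,\dots,J_\ell$ and with $\i$, hence preserves each $L_\pm$, and then the commutation/anticommutation relations of $J_{\ell+2}$ with $J_{\ell+1}$ translate into $J_{\ell+2}$ swapping $L_+ \leftrightarrow L_-$. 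An isomorphism $L_+ \xrightarrow{\sim} L_-$ of $\C$-vector spaces compatible with the $\Cl_{\ell}$-structure exists if and only if $\dim_\C L_+ = \dim_\C L_-$, since these are sums of copies of the irreducible $\Cl_\ell$-module $S_\ell$ (as $\ell = 4m-2$, there is a unique irreducible). Conversely, given $k_+ = k_-$, one builds $J_{\ell+2}$ explicitly as the off-diagonal operator $\begin{pmatrix} 0 & -T^{-1} \\ T & 0\end{pmatrix}$ for a suitable $\Cl_\ell$-linear, $\C$-antilinear isometry $T : L_+ \to L_-$, and checks directly that it squares to $-I$ and anticommutes with $J_1,\dots,J_{\ell+1}$.

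So the proof breaks into: (1) show any admissible $J_{\ell+2}$ must swap $L_-$ and $L_+$, hence forces $\dim_\C L_- = \dim_\C L_+$, hence $w(\gamma)=0$; (2) conversely, if $w(\gamma)=0$ then $\dim_\C L_- = \dim_\C L_+$, and one constructs $J_{\ell+2}$ by hand. The main obstacle I expect is bookkeeping in step (1): correctly tracking which of $J_o = J_1\cdots J_{\ell-1}$, $J_{\ell+1} = J_1\cdots J_\ell|_{L_+}$ etc.\ commute or anticommute with a hypothetical $J_{\ell+2}$, using the parity computation from the footnote on page \pageref{volume} ($w_n^2 = (-1)^{n(n+1)/2}I$, and $w_n$ commutes/anticommutes with the $J_i$ according to the parity of $n$), and then converting those relations into the statement that $J_{\ell+2}$ exchanges the two eigenspaces $L_\pm$ of $A$. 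Once that structural fact is in place, both directions are short, since the existence of a $\Cl_\ell$-linear $\C$-antilinear isomorphism between sums of copies of the unique irreducible $\Cl_\ell$-module is purely a matter of equal multiplicities. Note also the edge cases $L_-=0$ or $L_+=0$ (i.e.\ $w(\gamma)$ maximal): then no such $J_{\ell+2}$ can exist, consistent with $w(\gamma)\ne 0$.
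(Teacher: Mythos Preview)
Your strategy matches the paper's: reduce to whether $\dim_\C L_+=\dim_\C L_-$, show that any $J_{\ell+2}$ must interchange $L_+$ and $L_-$, and conversely build $J_{\ell+2}$ by hand when the dimensions agree. The converse construction you sketch is essentially the paper's (the paper writes $J_{\ell+2}=\bsm & -I_k\cr I_k&\esm J_\ell$ after identifying $L_+\cong L_-$ as $\Cl_\ell$-modules, which amounts to your off-diagonal $T$).

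Where you get tangled is the forward direction. Your second attempt (``argue that $J_{\ell+2}J_{\ell+1}$ commutes with $J_1,\dots,J_\ell$ and with $\i$, hence preserves each $L_\pm$'') is actually wrong: $J_{\ell+2}J_{\ell+1}$ anticommutes with $J_{\ell+1}$ and commutes with $J_\ell$, hence anticommutes with $A=J_{\ell+1}J_\ell^{-1}$, so it \emph{swaps} $L_\pm$ rather than preserving them. Since $J_{\ell+1}$ does preserve $L_\pm$, your stated conclusion that $J_{\ell+2}$ swaps them would then fail. Your first attempt (use $\C$-antilinearity of $J_{\ell+2}$) is the right one and only needs one more line: $J_{\ell+2}$ anticommutes with both $J_{\ell+1}$ and $J_\ell$, hence \emph{commutes} with $A$; so for $v\in L_+$ one has $A(J_{\ell+2}v)=J_{\ell+2}(Av)=J_{\ell+2}(\i v)=-\i\,J_{\ell+2}v$, i.e.\ $J_{\ell+2}v\in L_-$.

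The paper sidesteps all this bookkeeping with a single trick you may want to adopt: instead of $J_{\ell+2}$, look at $J_{\ell+2}J_\ell^{-1}$. This is $\C$-\emph{linear} (both factors anticommute with $\i$) and anticommutes with $A=J_{\ell+1}J_\ell^{-1}$, so it visibly interchanges the $\pm\i$-eigenspaces $L_\pm$ of $A$. That one line replaces the parity tracking you were anticipating.
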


\proof
With the notation from Remark \ref{kpm} recall  that $J_{\ell+1}J_\ell^{-1} = e^{(\pi/2)A} = A$ is a complex matrix with eigenvalue $\i$ on $L_+$ and $-\i$ on $L_-$. 

When $J_{\ell+2}$ exists, then $J_{\ell+2}J_\ell^{-1}$ is another complex matrix which anticommutes with $J_{\ell+1}J_\ell^{-1}$ and thus interchanges the eigenspaces $L_+$ and $L_-$.
Therefore $k_+ = k_-$, that is $w=0$. 

Vice versa, when $k_+=k_- =: k$, then $L_+$ and $L_-$ may be identified as $\Cl_\ell$-modules, and by putting $J_{\ell+2}:= \bsm & -I_k \cr I_k \esm\. J_\ell$ 
we define a further complex structure anticommuting with $J_1,\dots,J_{\ell+1}$.
\endproof

\begin{rem} \label{generalcase} 
We do not need to consider the group $G = \Sp_p$ and its iterated centrioles since $\Sp_p = P_4(\SO_{8p})$, cf.\ \eqref{table1}.
For $G = \U_p$, the iterated centrioles are $\U_q$ and  $\G_{q/2}(\C^q)$ (the complex Grassmannian) for all $q = p/2^m$. 
$\U_q$ has been considered in Prop.\ \ref{U-index}. 
In $\G_{q/2}(\C^q)$ the index of non-minimal geodesics is high when $q$ is large enough\footnote
	{This example was omitted in \cite{M}.} 
as can be seen from the {\em real Grassmannian} $\G_{q/2}(\R^q) = P_3(\Sp_q)$. 
In fact, real and complex Grassmannians have a common maximal torus, therefore any geodesic in the complex Grassmannian can be conjugated into the real Grassmannian.
Moreover, the distance between poles (i.e.\ the length of a minimal geodesic) agrees for the complex Grassmannian and the real one. Hence the index of a non-minimal geodesic in the complex Grassmannian is at least as big as in the real Grassmannian, but the latter one is among the  spaces considered in Prop.\ \ref{neq4m-2}.

\end{rem}

\section{Deformations of mapping spaces}  \label{defmapspace} 

We consider a round sphere $\SS=\SS^k$, $k\geq 1$, with ``north pole'' $N = e_{0}\in \SS$, and its equator $\SS' = \SS\cap N^\perp$ with ``north pole'' $N' = e_1\in \SS'$ (or rather ``west pole'', according to the orientation $(e_0,e_1)$), where the standard basis of $\R^{k+1}$ is denoted $e_0,\dots,e_k$. 
Let $\mu:[0,1]\to\SS$ be the meridian from $N$ to $-N$ through $N'$ and $m = \mu([0,1]) \subset \SS$. 

\ms
\hskip 5cm
\includegraphics{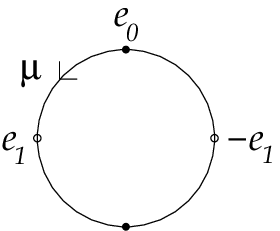}

\ms
Further let $P$ be the group $G \in \{\SO_p,\U_p,\Sp_p\}$ or one of its iterated centrioles  (see Section \ref{centrioles}). 
We fix some $J\in P$ and a shortest geodesic 
\[
     \gamma : J \leadsto -J
\]
in $P$, which we consider as a map $\gamma : m \to P$.

\begin{defn} \label{mapstar}
Let $\Map_*(\SS,P)$ be the space of $C^0$-maps $\phi:\SS\to P$ with $\phi(N) = J$, equipped with the compact-open topology.
We denote by $\Map_{\gamma}(\SS,P)$ the subset of maps $\phi$ with $\phi|_m = \gamma$, equipped with the subspace topology. 
\end{defn}

Building on the results from Section \ref{paths} will will develop a deformation theory for these mapping spaces $\Map_*(\SS,P)$. 
The main result, Theorem \ref{mindef}, says that stably it can be approximated by the subspace of maps which are block sums of constant maps and Hopf maps associated to Clifford representations (see Definition \ref{HopfCliff}).

\begin{lem} \label{1} 
The inclusion $\Map_\gamma (\SS, P) \subset \Map_*(\SS, P)$ is a weak homotopy equivalence. 
\end{lem}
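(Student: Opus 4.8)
The plan is to show that restriction to the meridian $m$ gives a fibration (up to homotopy) whose fiber over the point $\gamma \in \Map_*(m,P)$ (the subspace of maps $m\to P$ sending $N\mapsto J$) is exactly $\Map_\gamma(\SS,P)$, and then to argue that the base $\Map_*(m,P)$ is contractible. Concretely, let $r:\Map_*(\SS,P)\to\Map_*(m,P)$ be the restriction map. Since $(\SS,m)$ is a CW-pair and $m\subset\SS$ is a cofibration, $r$ is a Hurewicz fibration (the homotopy lifting property for the pair $(\SS,m)$ follows from the fact that $m$ is a neighborhood deformation retract in $\SS$, together with the standard mapping-space adjunction). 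Its fiber over $\gamma$ is $\Map_\gamma(\SS,P)$ by definition. So from the long exact sequence of the fibration it suffices to know that $\Map_*(m,P)$ is weakly contractible.

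The base space $\Map_*(m,P)$ is the space of continuous maps from the arc $m\cong[0,1]$ to $P$ sending the endpoint $N$ (corresponding to $0\in[0,1]$) to $J$; this is the based path space of $P$ based at $J$, which is contractible by the standard straight-line-to-the-constant-path deformation: $H_s(\omega)(t)=\omega((1-s)t)$ retracts it onto the constant path at $J$. (Here I use only that $P$ is a topological space; the contraction is canonical and continuous in the compact-open topology.) Hence $r$ is a weak homotopy equivalence onto $\Map_*(m,P)$ composed with... more precisely, since the base is weakly contractible, the inclusion of any fiber into the total space is a weak homotopy equivalence. Applying this to the fiber over $\gamma$ gives exactly that $\Map_\gamma(\SS,P)\hookrightarrow\Map_*(\SS,P)$ is a weak homotopy equivalence, as claimed.

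The one point that needs a little care — and which I expect to be the main (minor) obstacle — is verifying that $r$ really is a fibration, or at least a Serre fibration, so that the long exact sequence argument applies. This is where the hypothesis that we work with CW-spheres and the explicit arc $m$ enters: one shows that $m\hookrightarrow\SS$ has the homotopy extension property (it is a subcomplex in a CW structure on $\SS$), and then the exponential adjunction $\Map(\SS,P)\leftrightarrow$ (maps out of $\SS\times[0,1]$) converts a lifting problem for $r$ against a disk $\D^n$ into a homotopy extension problem for $(\D^n\times\SS,\D^n\times m)\cup(\{0\}\times\D^n\times\SS)$ inside $\D^n\times\SS$, which is solvable because $(\SS,m)$ is an NDR pair. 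One must also track the basepoint condition $\phi(N)=J$ throughout, but since $N\in m$ this is automatically preserved by all the constructions. Once fibration is in hand, the contractibility of the based path space $\Map_*(m,P)$ finishes the proof.

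\begin{proof}
Restriction of maps defines a map
\[
   r \colon \Map_*(\SS,P) \lra \Map_*(m,P),
\]
where $\Map_*(m,P)$ denotes the space of continuous maps $m\to P$ sending $N$ to $J$; under the identification $m\cong[0,1]$ with $N\leftrightarrow 0$ this is the based path space of $(P,J)$. The inclusion $m\hra\SS$ is a cofibration (it is the inclusion of a subcomplex for a suitable CW structure on $\SS$), so it has the homotopy extension property; by the exponential adjunction this implies that $r$ is a Serre fibration. By construction the fibre of $r$ over the point $\gamma\in\Map_*(m,P)$ is exactly $\Map_\gamma(\SS,P)$.

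The base $\Map_*(m,P)$ is contractible: the homotopy $H_s(\omega)(t)=\omega\big((1-s)t\big)$, $0\le s\le 1$, is continuous in the compact-open topology and deformation retracts $\Map_*(m,P)$ onto the constant path at $J$, all the while preserving the value at $0$. In particular $\Map_*(m,P)$ is weakly contractible.

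From the long exact homotopy sequence of the Serre fibration $r$ and the weak contractibility of its base we conclude that the inclusion of the fibre,
\[
   \Map_\gamma(\SS,P) \hra \Map_*(\SS,P),
\]
induces an isomorphism on all homotopy groups (and on $\pi_0$), i.e.\ it is a weak homotopy equivalence.
\end{proof}
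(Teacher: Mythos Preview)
Your proof is correct and follows essentially the same approach as the paper: both consider the restriction map $r:\Map_*(\SS,P)\to\Map_*(m,P)$, observe that it is a fibration because $(\SS,m)$ is a cofibration pair, identify the fibre over $\gamma$ as $\Map_\gamma(\SS,P)$, and conclude via the long exact sequence and contractibility of the based path space $\Map_*(m,P)$. The paper cites specific results from Bredon for the fibration property, while you spell out the exponential-adjunction argument and the explicit contraction of the path space, but the underlying strategy is identical.
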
 

\begin{proof}  
This can be seen directly by constructing a deformation of arbitrary maps $\phi\in\Map_*(\SS, P)$ into maps $\tilde\phi\in\Map_\gamma (\SS, P)$. 
However, the proof becomes shorter if we use some elementary homotopy theory, see \cite[Ch.\ VII]{Br}. 
We consider the restriction map
\[
      r :   \Map_*(\SS, P) \to \Map_*(m, P) \, , \quad \phi \mapsto \phi|_m , 
\]
with $\Map_*(m, P) := \{f:m\to P: f(N) = J\}$. 
Clearly, $r^{-1}(\gamma) = \Map_\gamma (\SS, P)$.
This map is a fibration, according to Theorem 6.13 together with Corollary 6.16 in \cite[Ch.\ VII]{Br}, applied to  $Y = P$ and $(X,A) = (\SS,m)$; the requirement that $A\subset X$ is a cofibration follows from Corollary 1.4 in  \cite[Ch.\ VII]{Br}. 
Since the base space is clearly contractible, the inclusion of the fibre
\beq \label{Mapgamma}
	\Map_\gamma (\SS, P)  \subset \Map_*(\SS, P)
\eeq
is a weak homotopy equivalence, due to the homotopy sequence of this fibration.
\end{proof}

Let $\SS'=\SS^{k-1}\subset\SS$ be the equator sphere. 
We may parametrize $\SS$ by $\SS'\x I$ with $(v,t)\mapsto \mu_v(t)$ where $\mu_v : [0,1]\to \SS$ is the meridian from $N$ to $-N$ through $v\in\SS'$. 
Using the assignment $\phi\mapsto (v\mapsto \phi\o\mu_v)$ for any $\phi\in\Map_\gamma(\SS,P)$ and $v\in\SS'$  we obtain a canonical homeomorphism  
\beq \label{S'S}
	\Map_\gamma(\SS,P) \approx \Map_*(\SS',\Omega) 
\eeq
where 
\[
   \Omega = \Omega(P ; J, - J) 
\]
is the space of continuous paths  $\omega : [0,1]\to P$ with $\omega(0) = J$ and $\omega(1) = -J$, and $\Map_*(\SS',\Omega)$ the space of mappings $\phi :\SS'\to\Omega$ with $\phi(N') = \gamma$.

Let $\Omega^0= \Omega^0(P; J , -J)  \subset \Omega$ be the subspace of minimal geodesics from $J$ to $-J$ in $P$. 
Since every such minimal geodesic  is determined by its midpoint, which belongs to the midpoint set $\hat P'\subset P$ whose components are the minimal centrioles $P' \subset P$ (see Section \ref{centrioles}), we furthermore have a canonical homeomorphism 
\beq \label{Omega0}
    \Map_*(\SS',\Omega^0) \approx  \Map_*(\SS',\hat P')    \, . 
\eeq
The composition 
\bea \label{SigmaJ}
    \Sigma_J\ : \ \Map_*(\SS',\hat P') &\approx& \Map_*(\SS',\Omega^0) \subset \Map_*(\SS',\Omega) \\
  &\approx& \Map_\gamma(\SS,P) \ \subset \Map_*(\SS,P) \nonumber
\eea
sends $\phi'\in\Map_*(\SS',\hat P')$ to $\phi \in \Map_*(\SS,P)$ defined by 
\beq \label{gammaphi}
	\phi(v,t) = \gamma_{\phi'(v)}(t)\ \ \textrm{ for all } (v,t) \in \SS'\x[0,1] 
\eeq
where $\gamma_{\phi'(v)} : J \leadsto -J$ is the shortest geodesic in $P$  through  $\phi'(v)$.

\begin{defn} The map $\Sigma_J$  is called the {\em geodesic suspension} along $J$. 
\end{defn} 

We will show that $\Sigma_J$ is highly connected in many cases. 
At first we will deal with the case $k=\dim \SS \geq 2$.

\begin{prop} \label{corl}
Let $k \geq 2$ and let $d+\dim( \SS')$ be smaller than the index of any non-minimal geodesic in the connected component $\Omega_* \subset \Omega$ containing $\gamma$. 
Then $\Sigma_J$ is $d$-connected. 
\end{prop}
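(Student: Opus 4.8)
The statement says that the geodesic suspension $\Sigma_J$ is $d$-connected provided $d + \dim(\SS')$ is below the index of any non-minimal geodesic in $\Omega_* \subset \Omega$. The map $\Sigma_J$ is the composition in \eqref{SigmaJ}; the homeomorphisms \eqref{S'S} and \eqref{Omega0} are actual homeomorphisms, so the only thing that is not obviously an equivalence is the inclusion $\Map_*(\SS',\Omega^0) \subset \Map_*(\SS',\Omega)$. Hence it suffices to show that, if we restrict both sides to the relevant connected component, the inclusion $\Map_*(\SS', \Omega^0_*) \subset \Map_*(\SS', \Omega_*)$ is $d$-connected, where $\Omega_*$ is the component of $\gamma$ and $\Omega^0_* = \Omega^0 \cap \Omega_*$. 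The plan is to deduce this from Theorem \ref{short} by a relative-cell-attachment (obstruction-theory) argument using the mapping-space exponential law.

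\textbf{Main steps.} First I would set $c := d + \dim(\SS')$ and record that, by Theorem \ref{short} applied to the component $\Omega_*$, the inclusion $\Omega^0_* \subset \Omega_*$ is $c$-connected. Second, I would take a continuous map $\Phi : (\D^n, \partial \D^n) \to (\Map_*(\SS',\Omega_*), \Map_*(\SS',\Omega^0_*))$ with $n \leq d$; by the exponential law (adjunction for mapping spaces, compact-open topology, $\SS'$ compact) this is the same as a continuous map $\widehat{\Phi} : \D^n \times \SS' \to \Omega_*$ carrying $\partial\D^n \times \SS'$ into $\Omega^0_*$ and fixing the basepoint condition $\widehat\Phi(\cdot, N') = \gamma$. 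Third, I would regard $\D^n \times \SS'$ as a finite CW pair over $\partial\D^n \times \SS' \cup \D^n \times \{N'\}$ and push the map into $\Omega^0_*$ cell by cell in increasing dimension: each cell not in the subcomplex has dimension at most $n + \dim\SS' \leq d + \dim\SS' = c$, so by the $c$-connectedness of $\Omega^0_* \subset \Omega_*$ together with the homotopy extension property one can deform $\widehat\Phi$, rel the subcomplex, to a map into $\Omega^0_*$. This is exactly the inductive mechanism recorded in the second paragraph of Definition \ref{dconnected}, applied to the CW-pair $(\D^n\times\SS',\, \partial\D^n\times\SS' \cup \D^n\times\{N'\})$. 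Fourth, I would transport the resulting homotopy back across the adjunction to obtain a deformation $\Phi_t$ of $\Phi$ in $\Map_*(\SS',\Omega_*)$, constant on $\partial\D^n$, with $\Phi_1(\D^n) \subset \Map_*(\SS',\Omega^0_*)$, which is precisely $d$-connectedness of the inclusion. Finally, composing with the homeomorphisms \eqref{S'S}, \eqref{Omega0} and the weak equivalence from Lemma \ref{1} yields that $\Sigma_J$ is $d$-connected.

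\textbf{Expected obstacle.} The routine but delicate point is the bookkeeping in step three: one must check that the connectivity bound $c = d+\dim\SS'$ in Definition \ref{dconnected} is genuinely what is needed to fill cells of the product $\D^n \times \SS'$ over the subcomplex containing $\D^n \times \{N'\}$, i.e.\ that the basepoint constraint $\phi(N') = \gamma$ is respected throughout the deformation and does not force cells of too-high dimension. Keeping track of the basepoint section $\D^n \times \{N'\}$ inside the subcomplex, and invoking the homotopy extension property in the relative form, is what makes the dimension count come out as stated; the hypothesis ``$k \geq 2$'' (so that $\SS'$ is a genuine positive-dimensional sphere rather than a pair of points, where the meridian parametrization degenerates) is used implicitly here. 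Beyond that, the argument is a formal consequence of Theorem \ref{short} and the adjunction, so no new geometry is required.
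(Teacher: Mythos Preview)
Your proposal is correct and follows essentially the same route as the paper: reduce via Lemma~\ref{1} and the homeomorphisms \eqref{S'S}, \eqref{Omega0} to the inclusion $\Map_*(\SS',\Omega^0)\subset\Map_*(\SS',\Omega)$, adjoint a test map $\D^j\to\Map_*(\SS',\Omega)$ to $\hat\psi:\D^j\times\SS'\to\Omega$, and apply Theorem~\ref{short} to the CW-pair $(\D^j\times\SS',\,(\partial\D^j\times\SS')\cup(\D^j\times\{N'\}))$ of dimension $\leq d+\dim\SS'$. One small clarification: the role of the hypothesis $k\geq 2$ in the paper is precisely that $\SS'=\SS^{k-1}$ is \emph{connected}, so that the image of $\hat\psi$ (which contains the basepoint $\gamma$) automatically lies in the single component $\Omega_*$; this is the reason the index hypothesis need only be imposed on $\Omega_*$, not on all of $\Omega$.
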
 

\begin{proof} 
By Lemma \ref{1}, the inclusion $ \Map_\gamma(\SS,P) \to \Map_*(\SS,P)$ is a weak homotopy equivalence. 
Therefore we only have to deal with the  inclusion 
\[
    \Map_*(\SS',\Omega^0) \subset \Map_*(\SS',\Omega) \, . 
\]
Let $\psi : \D^j \to\Map_*(\SS',\Omega)$, $j \leq d$, with  $\psi(\d \D^j )\subset \Map_*(\SS',\Omega^0)$. 

We consider $\psi$ as a map $ \hat\psi :  \D^j \x\SS' \to\Omega$ and observe, using $k \geq 2$,  that the image  of this map lies in the connected component $\Omega_* \subset \Omega$ determined by  $\gamma$. 

Theorem \ref{short} may now be applied to the  $(j+ \dim \SS')$-dimensional CW-pair (compare Definition \ref{dconnected})
\[
    (X,Y) = (\D^j  \times \SS', (\partial \D^j \times \SS') \cup (\D^{j} \times \{N'\})) 
\]
where $j + \dim \SS' \leq d + \dim \SS'$. 
This results in a deformation of $\hat\psi$  to a map with image contained in $\Omega^0_*$ by a deformation which is constant on $(\partial \D^j \times \SS') \cup (\D^j \times \{N'\}) $. 

Hence $\psi$ may be deformed to a map with image contained in $\Map_*(\SS', \Omega^0)$ by a deformation which is constant on $\partial \D^j$. 
\end{proof} 

\noindent
The following picture  
illustrates the deformation process employed in this proof where $\varphi\in\Map_\gamma(\SS,P)$ lies in the range of $\psi$.

\smallskip
\hskip 1cm
\includegraphics{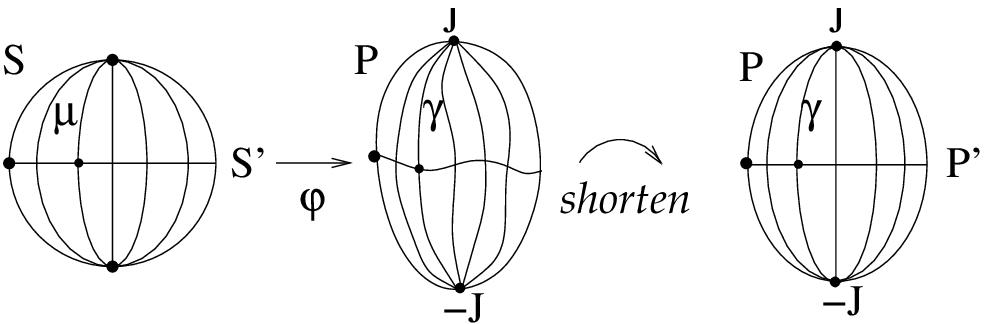}

\section{Iterated suspensions and Clifford representations}

This process can be iterated.
For the sake of exposition we will concentrate on the case $G = \SO_p$ in the following argument. 
Let $k \geq 1$ and consider an orthogonal $\Cl_k$-representation 
\[
     \rho: \Cl_k \to \R^{p\x p}  
\]
 determined by  anticommuting complex structures $J_1, \ldots, J_k \in \SO_p$. 
Let 
\beq \label{chain}
    \SO_p = P_0  \supset P_1 \supset \cdots \supset P_k
\eeq
be the associated chain of minimal centrioles (compare Theorem \ref{thmcentrioles}). 

Let $e_0,\dots,e_k$ denote the standard basis of $\R^{k+1}$ and $\SS^k \subset \R^{k+1}$ be the unit sphere. Furthermore, for $ \ell \in \{0 , \ldots, k\}$ let\footnote  
	{The reason for this unusual choice is that \eqref{chain} is descendent
	and we would like to map $e_1$ onto $J_1\in P_1$ and $e_j$ onto $J_j\in P_j$
	as in Theorem \ref{thmcentrioles}.}
$$
	\SS^{k-\ell} := \SS^k\cap \SSpan\{e_\ell,\dots,e_k\}
$$ 
be the great sphere in the subspace spanned by $e_\ell,\dots,e_k$ with ``north pole'' (base point) $e_{\ell}$ and equator sphere $\SS^{k-\ell -1}$.

Let $\Map_*(\SS^{k-\ell} , P_{\ell})$, $\ell= 0, \ldots, k$, be the space of maps $\SS^{k-\ell}  \to P_{\ell}$ sending $e_{\ell}$ to $J_{\ell}$  for $\ell \geq 1$ and sending $e_0$ to $I\in \SO_p$ for $\ell = 0$. 
Putting $\Sigma_\ell := \Sigma_{J_\ell}$ (the suspension along $J_\ell$) with $J_0 := I$ we consider the composition 
\begin{equation} \label{theta} 
 \theta:   \Map_*(\SS^1, P_{k-1})  \stackrel{\Sigma_{{k-2}}}{\longrightarrow} \Map_*(\SS^2, P_{k-2})  \stackrel{\Sigma_{{k-3}}}{\longrightarrow} \cdots \stackrel{\Sigma_{{0}}}{\longrightarrow}  \Map_*(\SS^k  ,  \SO_p) \, . 
\end{equation} 

\begin{prop} \label{highcon} For each $d$  there is a $p_0=p_0(d)$ such that  $\theta$  is $d$-connected whenever $p \geq p_0$. 
\end{prop}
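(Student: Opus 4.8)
\textbf{Proof plan for Proposition \ref{highcon}.}

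The plan is to prove $\theta$ is $d$-connected by an induction on the number of suspension factors, using Proposition \ref{corl} at each stage. The key observation is that $\theta$ is a composition of $k$ maps $\Sigma_\ell$, and since a composition of $c$-connected maps is $c$-connected, it suffices to arrange that each individual geodesic suspension
\[
   \Sigma_\ell : \Map_*(\SS^{k-\ell-1}, \hat P_{\ell+1}) \to \Map_*(\SS^{k-\ell}, P_\ell)
\]
(appearing in \eqref{theta} as a map landing in $\Map_*(\SS^{k-\ell}, P_\ell)$, with source the space of maps into the midpoint set $\hat P_{\ell+1}$ of $(P_\ell, J_\ell)$) is $d$-connected whenever $p\geq p_0(d)$. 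Here one uses the identification, from Section \ref{paths} and \eqref{Omega0}, of $\Map_*(\SS^{k-\ell-1}, \hat P_{\ell+1})$ with $\Map_*(\SS^{k-\ell-1}, \Omega^0(P_\ell; J_\ell, -J_\ell))$, together with the fact (Theorem \ref{thmcentrioles}) that the components of $\hat P_{\ell+1}$ are precisely the minimal centrioles $P_{\ell+1}\subset P_\ell$, so the chain \eqref{chain} is exactly the relevant tower of midpoint sets.

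By Proposition \ref{corl}, applied to $P = P_\ell$, the pole pair $(J_\ell, -J_\ell)$, and the sphere $\SS^{k-\ell}$ (which has dimension $k-\ell \geq 1$; for the step into $\SO_p = P_0$ the dimension is $k\geq 1$, and all intermediate steps have $\dim \SS^{k-\ell}\geq 2$, so the hypothesis $k\geq 2$ of Proposition \ref{corl} is met except possibly for the very last sphere $\SS^1\to\SS^2$ suspension which has target-sphere dimension $2$ — in fact every $\Sigma_\ell$ with $\ell\leq k-2$ has target sphere of dimension $\geq 2$, and the first map in \eqref{theta} out of $\Map_*(\SS^1, P_{k-1})$ is $\Sigma_{k-2}$ with target $\Map_*(\SS^2, P_{k-2})$, so $k=2$ there), the map $\Sigma_\ell$ is $d$-connected as soon as
\[
   d + \dim \SS^{k-\ell-1} = d + (k-\ell-1) < \ind(\gamma_\ell)
\]
for every non-minimal geodesic $\gamma_\ell : J_\ell \leadsto -J_\ell$ in $P_\ell$. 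So the whole argument reduces to the index estimates of Section \ref{index}: by Proposition \ref{neq4m-2} (case $\ell \neq 4m-2$) the index of any non-minimal such geodesic is $> r-1$ with $r = p/s_{\ell+1}$, and by Proposition \ref{4m-2} together with Remark \ref{kpm} (case $\ell = 4m-2$) the index exceeds any prescribed bound once $p$ is large enough — more precisely, for a geodesic of winding number $w$ one has $\ind(\gamma) > d'$ provided $p - 4|w| \geq 4 d' s_\ell$; but since the component $\Omega_*$ is fixed by $\gamma$ its winding number is a fixed integer $w_0$ determined by $\gamma$ (the constant value of $\det$), so again taking $p$ large forces the index estimate. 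In either case, given the target connectivity $d$ and the finitely many values $\ell = 0, \ldots, k-1$ and $k-\ell-1$ that occur, there is a threshold $p_0(d)$ (the maximum over these finitely many cases of the $p$ needed to make $\ind > d + (k-\ell-1)$) such that for $p\geq p_0(d)$ every $\Sigma_\ell$ is $d$-connected, hence so is the composite $\theta$.

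The main obstacle is the bookkeeping around the winding-number subtlety in the case $\ell = 4m-2$: unlike the $SO_p$-type estimate of Proposition \ref{neq4m-2}, the bound in Proposition \ref{4m-2} is not uniform over all non-minimal geodesics but only over those of bounded winding number, so one must argue that fixing the connected component $\Omega_*\subset \Omega(P_\ell; J_\ell,-J_\ell)$ pins down $w$ and hence that, for that fixed component, $p$ large does give the desired index lower bound. One also has to check the degenerate boundary cases of Proposition \ref{corl}'s hypothesis $k\geq 2$: the only suspension with a $1$-dimensional target sphere would be a hypothetical $\Sigma_\ell$ into $\Map_*(\SS^1, \cdot)$, but no such factor appears in \eqref{theta} since the composition $\theta$ \emph{starts} at $\Map_*(\SS^1, P_{k-1})$ and each $\Sigma_\ell$ raises the sphere dimension, so every $\Sigma_\ell$ occurring has target sphere dimension $\geq 2$ and Proposition \ref{corl} applies verbatim.
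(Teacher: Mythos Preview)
Your overall strategy matches the paper's exactly: factor $\theta$ into the suspensions $\Sigma_\ell$ for $\ell=0,\dots,k-2$, apply Proposition~\ref{corl} to each, and feed in the index estimates from Section~\ref{index}. The case distinction and the check that every target sphere has dimension $\geq 2$ are also correct.

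There is, however, a genuine gap in your treatment of the case $\ell=4m-2$. You correctly observe that Proposition~\ref{4m-2} only gives an index bound for non-minimal geodesics of a \emph{given} winding number $w$, and you note that $w$ is constant on the component $\Omega_*$ containing the reference geodesic $\gamma$. But you then assert that ``taking $p$ large forces the index estimate'' without controlling $w$. This is not justified: as $p$ varies, the whole configuration $(P_\ell,J_\ell,\gamma)$ varies, so the winding number $w=w(p)$ of the relevant component is a priori a function of $p$. If $|w(p)|$ grew like $p/4$, say, then $p-4|w|$ would stay bounded and Proposition~\ref{4m-2} would never give the required index bound, no matter how large $p$ is. Citing Remark~\ref{kpm} does not help here; that remark describes the structure of minimal geodesics but says nothing about the value of~$w$.

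The paper closes this gap with Proposition~\ref{Pell+2}: since the suspensions in $\theta$ run only over $\ell\leq k-2$, the complex structure $J_{\ell+2}$ (part of the given Clifford family $J_1,\dots,J_k$) always exists, and by Proposition~\ref{Pell+2} this forces $w(\gamma)=0$. With $w=0$ the condition $p-4|w|\geq 4d' s_\ell$ becomes simply $p\geq 4d' s_\ell$, which is clearly achievable. You should invoke this explicitly rather than leaving the winding number as an unspecified ``fixed integer''.
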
 

\begin{proof}  Since the composition of $d$-connected maps is $d$-connected it suffices to show  that for all $\ell = 0 , \ldots, k-2$ the suspension
\[
     \Sigma_{J_{\ell} } :  \Map_*(\SS^{k-\ell-1}, P_{\ell+1} ) \to \Map_*(\SS^{k-\ell} , P_{\ell}) 
\]
 in the composition $\theta$ is $d$-connected once  $p \geq p_0$. 

This claim follows from Proposition \ref{corl}, where the assumption on  indices of non-minimal geodesics holds for the following reasons: 
\begin{enumerate}
   \item If $\ell+2$ is not divisible by four it holds by Proposition \ref{neq4m-2}. 
   \item If $\ell+2$ is divisible by four it holds by  Proposition \ref{4m-2}. 
     Indeed, let $\gamma : J_\ell \leadsto -J_\ell$ be the minimal geodesic in $P_\ell$ running through $J_{\ell+1}$. 
     Then $w(\gamma) = 0$ by Proposition \ref{Pell+2} since there is a complex structure $J_{\ell+2}$ anticommuting with $J_1, \ldots, J_{\ell+1}$ (recall $\ell+ 2 \leq k$). 
     Hence $w(\tilde \gamma)=0$ for each non-minimal geodesic $\tilde \gamma$ in the connected component $\Omega_* \subset \Omega = \Omega(P_\ell; J_\ell , - J_\ell)$ containing $\gamma$. 
\end{enumerate} 
\end{proof} 

It remains to investigate the space $\Map_*(\SS^1, P_{k-1})$. 
We restrict to the case $k= 4m-1$. 
Setting $\ell = k - 1$ this means  $\ell = 4m-2$, and $P = P_{\ell}$. 
From equation \eqref{embeddingPU} we recall the canonical embedding 
\[
    P_\ell \hookrightarrow \U_{p/2} \, , \quad J \mapsto JJ_\ell^{-1} \, . 
\]

\begin{assum} In the remainder of this section we will assume that the given $\Cl_k$-representation $\rho$ satisfies 
\[
   J_k = + J_1 \cdots J_{\ell} \, .
\]
Orthogonal $\Cl_k$-representations of this kind and their Hopf maps will be called 
{\em positive}.\footnote 
	{\label{1234567}
	For $k=3$ and $7$ this is the representation of $Cl_k$ on $\K \in\{\H,\O\}$
	by left translations. In fact, on $\H$ we have $ijk = -1$ (Hamilton's equation).
	On $\O$ there are further basis elements $\ell$, $p = i\ell$, $q = j\ell$, $r = k\ell$,
	and the corresponding left translations on $\O$ will be denoted by the same symbols.
	Then $ijk = -\id$ on $\H$.
	Further, using anti-associativity of Cayley triples like $(j\ell)r = -j(\ell r)$
	we have $\ell pqr = \ell(p((j\ell)r)) = -\ell(p(j(\ell r))) = -\ell(p(j(\ell k\ell))) 
	=-\ell(p(jk)) = -\ell((i\ell)i) = -\ell^2 = 1$ and thus $\ell pqr = \id$ on $\H$ 
	since $\ell pqr$ commutes with $i,j,k$. Moreover, both $ijk$ and $\ell pqr$ 
	anticommute with $\ell$, hence on $\ell\H$ we have $ijk = \id$ and $\ell pqr = -\id$.
	Together we see $ijk\ell pqr = -\id$ on $\O$.}

Furthermore we denote by 
\[
    \gamma : J_\ell \leadsto - J_\ell
\]
 the shortest geodesic in $P_\ell$  through $J_{\ell+1} = J_k$. 
With the complex structure $\i = J_o = J_1 J_2 \cdots J_{\ell-1}$ on $\R^p$ we then have 
\[
    \gamma(t) = e^{\pi \i t} J_\ell  
\]
In particular {$w(\gamma) =  p/4$}. 
\end{assum}

\begin{defn} \label{windingloop} 
For each $\omega \in \Map_*(\SS^1, P_\ell)$ with $\ell = k-1$ 
we define the {\em winding number} $\eta(\omega) \in \Z$ as the winding number of the composition 
\[
	\SS^1 = \SS^{k-\ell} \stackrel{\omega}{\longrightarrow}  P_\ell \hookrightarrow \U_{p/2} \stackrel{\det}{\longrightarrow} \SS^1 \, .
 \]
The  first map identifies $(\alpha,\beta) \in \SS^1 \subset \R^2$ and  $\alpha e_{k-1} + \beta e_k \in \SS^{k-\ell} \subset {\rm Span} \{ e_{k-1}, e_k\}$. Note that $\eta$ is constant on path components of $ \Map_*(\SS^1, P_\ell)$.

For $\eta \in \Z$ let 
\[
    \Map_*(\SS^1, P_{\ell})_{\eta} \subset  \Map_*(\SS^1, P_{\ell})
\]
denote the subspace of loops with winding number equal to $\eta$. 
This is a union of path components of $\Map_*(\SS^1, P_{\ell})$. 

By Proposition \ref{highcon}, for sufficiently large $p$, the map $\theta$ in \eqref{theta} induces a bijection of path components.
Hence, at least after taking a block sum with a constant map $\SS^k \to \SO_{p'}$ for some large $p'$, the previously defined winding number
\[
  \eta :  \Map_*(\SS^1, P_{\ell}) \to \Z 
\]
induces a map 
\[
   \eta^s :  \Map_*(\SS^k, \SO_p) \to \Z \, , 
\]
such that $\eta^s\o\theta = \eta$, which we call the {\em stable winding number}.

This is independent of the particular choice of $p'$, and hence well defined on $\Map_*(\SS^k , \SO_p)$ for the original $p$. 
By definition it is constant on path components and therefore may be regarded as a map 
\[
    \eta^s : \pi_k ( \SO_p; I) \to \Z \, . 
\]
Since all positive $\Cl_k$-representations on $\R^p$ are isomorphic, this map is independent from the chosen positive $\Cl_k$-representation $\rho$. 

For $\eta \in \Z$ let 
\[
    \Map_*(\SS^k, \SO_p)_\eta \subset \Map_*(\SS^k , \SO_p) 
\]
denote  the subspace with  stable winding number equal to $\eta$. 
\end{defn} 

\begin{ex} \label{winding_for_hopf} 
Let $\mu : \SS^k \to \SO_p$ be the Hopf map (see Definition \ref{HopfCliff}) associated to the given Clifford representation $\rho$. 
Then $\eta^s(\mu) = p/2$. 
Indeed we have $\mu = \theta ( \omega)$, where $\omega : \SS^1 \to P_{\ell}$ is given by $\alpha \cdot e_{k-1} + \beta \cdot e_{k} \mapsto  \alpha \cdot J_{k-1} + \beta \cdot J_{k}$, such that with $\i = J_1 J_2 \cdots J_{\ell-1}$ the composition $\SS^1 \to P_\ell \to \U_{p/2}$ is given by $e^{\pi i t} \mapsto e^{ \pi \i t} J_{\ell}$ as $J_k = + J_1 \cdots J_\ell = \i J_\ell$. 
\end{ex}

\begin{rem} \label{windingadditive} The stable winding number is additive with respect to block sums of $\Cl_k$-representations. 
More precisely, let 
\[
    \rho_i : \Cl_k \to \SO_{p_i} 
\]
be positive $\Cl_k$-representations, $i = 1,2$, with associated chains of minimal centrioles 
\[
    \SO_{p_i} \supset P^{(i)}_1 \supset \cdots \supset P^{(i)}_k \, . 
\]
The chain of minimal centrioles associated to the block sum action 
\[
    \rho = \rho_1 \oplus \rho_2 : \Cl_k \to \SO_{p_1}\x\SO_{p_2}\subset\SO_{p_1+p_2} 
\]
 then takes the form 
\[
    \SO_{p_1}\x\SO_{p_2}  \supset P^{(1)}_1\x P^{(2)}_1 \supset \cdots \supset P^{(1)}_k\x P^{(2)}_k \,, 
\]
and each suspension in \eqref{theta} 
is a product of corresponding suspensions for $\rho_1$ and $\rho_2$. 
Hence for $\phi = \phi_1 \oplus \phi_2 \in \Map_*(\SS^k, \SO_{p_1 + p_2})$ of block sum form we obtain for the stable winding numbers
\[
       \eta^s (\phi) = \eta^s(\phi_1) + \eta^s(\phi_2) \, . 
\]
\end{rem}

\bigskip
Let $\Omega = \Omega(P_\ell; J_\ell, - J_\ell)$ be the space of paths in $P_\ell$ from $J_\ell$ to $-J_\ell$ and let 
\[
   \hat P_k =  \Omega^0   \subset \Omega
\]
be the subspace of shortest geodesics, cf.{} \eqref{Pj}.
Consider the geodesic suspension 
\[
   \Sigma_{\ell} : \Map_*(\SS^0, \hat P_k) \to \Map_*(\SS^1, P_\ell) \, ,
\]
where $\Map_*(\SS^0, \Omega^0)$ is the space of maps $\SS^0 = \{ \pm e_k\} \to \Omega^0$ sending $e_k$ to $\gamma \in \Omega^0$.

\begin{defn} 
For $\phi  \in  \Map_*(\SS^0, \Omega)$ we denote by $w(\phi) \in \Z$ the winding number of $\tilde{\gamma} = \phi(-e_k) \in \Omega$ in the sense of Definition \ref{w}. 
Note that  possibly $w(\tilde \gamma) \neq w(\gamma)$, since $\SS^0$ is disconnected and hence $\tilde \gamma = \phi(-e_k)$ and $\gamma = \phi(e_k)$ may lie in different path components of $\Omega$. 

For $w \in \Z$ let 
\[
     \Map_*(\SS^0, \Omega)_w \subset  \Map_*(\SS^0, \Omega)  \text{ and }   \Map_*(\SS^0, \Omega^0)_{w} \subset  \Map_*(\SS^0, \Omega^0)
\]
be the subspaces of maps of winding number $w$. 

\ms
\hskip 5cm
\includegraphics{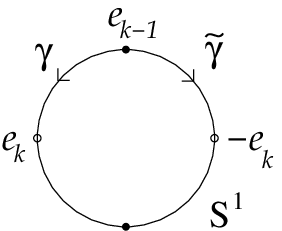}
\ms

Note that the concatenation map $\tilde\gamma \mapsto \gamma\ast\tilde\gamma^{-1}$ 
(first $\gamma$, then $\tilde\gamma^{-1}$ where $\tilde\gamma^{-1}(t) := \tilde\gamma(1-t)$, which together form a loop starting and ending at $J_{k-1}$) gives a canonical identification
\[
    \Map_*(\SS^0, \Omega)_w= \Map_*(\SS^1, P_\ell)_{\eta = (p/4) - w} 
\]
since $w(\gamma) =  p/4$.
\end{defn} 

After these preparations we obtain the following analogue of Proposition  \ref{corl}. 

\begin{prop} \label{corlk1} 
Let $d \geq 0$ and let $w \in \Z$ satisfy $ p - 4 | w | \geq 4 s_\ell\. d$. 
Then the suspension $\Sigma_{\ell}$ restricts to a $d$-connected map 
\[
   \Sigma_\ell : \Map_*(\SS^0, \Omega^0)_w= \Map_*(\SS^1, P_\ell)_{\eta = (p/4) - w} \, . 
\]
\end{prop}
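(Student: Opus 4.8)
The plan is to mimic the proof of Proposition \ref{corl}, replacing the application of Theorem \ref{short} by the refined index estimate of Proposition \ref{4m-2}, which is exactly the tool that controls non-minimal geodesics once we fix the winding number. Recall from the discussion preceding the statement that the concatenation identification $\Map_*(\SS^0,\Omega)_w \approx \Map_*(\SS^1,P_\ell)_{\eta=(p/4)-w}$ is canonical, and that the geodesic suspension $\Sigma_\ell$ corresponds, under \eqref{Omega0} and the analogue of \eqref{S'S} with $\SS'=\SS^0$, to the inclusion $\Map_*(\SS^0,\Omega^0)_w \subset \Map_*(\SS^0,\Omega)_w$. So it suffices to show that this inclusion of the ``fibre over $e_k$ fixed, fibre over $-e_k$ free'' spaces is $d$-connected under the hypothesis $p-4|w|\geq 4s_\ell d$.

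Concretely, I would take a map $\psi:(\D^j,\partial\D^j)\to (\Map_*(\SS^0,\Omega)_w,\Map_*(\SS^0,\Omega^0)_w)$ with $j\leq d$. Since evaluation at $e_k$ is constantly $\gamma\in\Omega^0$, the only data is the map $x\mapsto \psi(x)(-e_k)\in\Omega$, i.e.\ a map $\D^j\to\Omega_*$ into the path component $\Omega_*\subset\Omega(P_\ell;J_\ell,-J_\ell)$ of winding number $w$, sending $\partial\D^j$ into $\Omega^0_*$. By Proposition \ref{4m-2}, every non-minimal geodesic $\tilde\gamma$ in $\Omega_*$ has winding number $w$ (winding number is constant on path components, Definition \ref{w}), hence satisfies $p-4|w(\tilde\gamma)| = p-4|w|\geq 4s_\ell d$, so by that proposition $\ind(\tilde\gamma)>d$, i.e.\ $\ind(\tilde\gamma)\geq d+1$. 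Therefore the number ``$d$'' of Theorem \ref{short} applied to the component $\Omega_*$ is at least our $d$. Since $\dim\SS^0 = 0$, the relevant CW-pair here is just $(\D^j,\partial\D^j)$ itself, of dimension $j\leq d$, so Theorem \ref{short} provides a deformation of $\psi$ (constant on $\partial\D^j$) into $\Map_*(\SS^0,\Omega^0)_w$. Unwinding the homeomorphisms gives the asserted $d$-connectivity of $\Sigma_\ell$.

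The one point requiring slight care — and the closest thing to an obstacle — is bookkeeping the winding-number constraint through the identifications: one must check that the component $\Omega_*$ of $\gamma$ in $\Omega(P_\ell;J_\ell,-J_\ell)$ is irrelevant and that it is the component of $\tilde\gamma=\psi(x)(-e_k)$ that carries the winding number $w$ appearing in the hypothesis, so that Proposition \ref{4m-2} applies with the correct value; this is exactly what the disconnectedness of $\SS^0$ forces us to track, as flagged in the definition above. Once this is in place, there are no curvature or Morse-theoretic computations to redo — they are all absorbed into Proposition \ref{4m-2} and Theorem \ref{short}. Thus the proof is essentially a transcription of the proof of Proposition \ref{corl} to the case $\dim\SS' = 0$, with the unconditional index bound replaced by the winding-number-conditional one. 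I would write it as: ``The proof is verbatim that of Proposition \ref{corl}, using Proposition \ref{4m-2} in place of the index hypothesis and noting that for $\dim\SS^0=0$ the CW-pair in question is $(\D^j,\partial\D^j)$; the hypothesis $p-4|w|\geq 4s_\ell d$ guarantees via Proposition \ref{4m-2} that every non-minimal geodesic in the component $\Omega_*$ of $\Omega(P_\ell;J_\ell,-J_\ell)$ under consideration has index $>d$.''
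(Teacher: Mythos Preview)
Your proposal is correct and follows essentially the same approach as the paper: reduce $\Sigma_\ell$ to the inclusion $\Omega^0(P_\ell;J_\ell,-J_\ell)_w \subset \Omega(P_\ell;J_\ell,-J_\ell)_w$ via the identifications \eqref{SigmaJ} with $\SS'=\SS^0$, then invoke Theorem~\ref{short} together with the index bound of Proposition~\ref{4m-2} under the hypothesis $p-4|w|\geq 4s_\ell d$. Your added bookkeeping about which component carries the winding number $w$ (namely that of $\psi(x)(-e_k)$, not of $\gamma$) is exactly right and makes explicit what the paper leaves implicit.
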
 

\begin{proof} Similar as in the proof of Proposition \ref{corl} we only have to deal with the inclusion
\[
   \Map_*(\SS', \hat P_k)_w \subset \Map_*(\SS', \Omega)_w \, 
\]
where we now have  $\SS' = \SS^0$, cf. \eqref{SigmaJ}. 
This is equivalent to the inclusion 
\[
   \Omega^0(P_\ell; J_\ell,-J_\ell)_w \subset \Omega(P_\ell; J_\ell,-J_\ell)_w\, . 
\]
By Theorem \ref{short} and Proposition \ref{4m-2} with our assumption on $|w|$ this inclusion is $d$-connected. 
\end{proof}

Each $\phi \in \Map_*(\SS^0, \Omega^0)$ is determined by its value on  $-e_k \in \SS^0$, a minimal geodesics 
$\tilde\gamma(t) = e^{\pi \i A t}J_\ell$  
where $A$ is a self adjoint complex $(\frac{p}{2}\x\frac{p}{2})$-matrix with eigenvalues $\pm 1$ which  commutes  with $J_1, \ldots, J_\ell$.  
Furthermore  {$w(\tilde \gamma) =  \trace_{\C}  A$}. 

As in Remark \ref{kpm} the eigenspaces of $A$ induce an orthogonal splitting 
\[
   \R^p = L_0  \oplus L_1
\]
into subspaces invariant under $J_1, \ldots, J_\ell$ (hence under $\i = J_1 \cdots J_{\ell-1}$)  such that 
\[
    \tilde\gamma(t) = \gamma(t) = e^{\pi \i t} J_\ell\, 
\textrm{ on }L_0 \, , \quad  \tilde\gamma(t) = e^{ -\pi \i t} J_\ell
\, \textrm{ on }L_1 \,  . 
\]
Hence the geodesic $\tilde \gamma $ is of block diagonal form 
\[
      \tilde\gamma = \gamma_0 \oplus \gamma_1 : [0,1] \to \SO(L_0) \times \SO(L_1) 
\]
with midpoint $(+J_k, -J_k)$ and 
$w(\tilde \gamma) = \2(\dim_{\C} L_0-\dim_{\C}L_1)$.
The suspension  $\omega := \Sigma_{J_\ell} (\phi) \in  \Map_*(\SS^1, P_\ell)$ is  
equal to the concatenation
\[
       \omega = \gamma \ast \tilde\gamma^{-1} : [0,1] \to P_\ell \, . 
\]
Therefore  $\omega$ is also in  block diagonal form 
\[
    \omega = \omega_0 \oplus \omega_1 : [0,1]  \to \SO(L_0) \times \SO(L_1) \, , 
\]
where 
\[
     \omega_0 = \gamma_0 \ast \gamma_0^{-1} \,, \quad   
	               \omega_1(t) = e^{2\pi \i t} J_\ell\,. 
\]
The loop $\gamma_0 \ast \gamma_0^{-1}$  is homotopic relative to $\{0,1\}$ to the constant loop with value  $J_\ell$. This motivates the following definition.

\begin{defn} \label{loops} We define the subspace 
\[
      \Map^0_*(\SS^1, P_\ell) \subset \Map_*(\SS^1, P_\ell)
\]
consisting of loops $\omega \in \Map_*(\SS^1, P_\ell)$ such that there is an orthogonal decomposition 
\[
   \R^p = L_0 \oplus L_1
\]
into subspaces which are invariant under $J_1, \ldots, J_\ell$  and with respect to which $\omega$ is in block diagonal form 
\[
    \omega = \omega_0 \oplus \omega_1 : \SS^1 \to \SO(L_0) \times \SO(L_1) 
\]
such that $\omega_0$ is constant with value $J_\ell$ and  
$\omega_1: \SS^1  
\to \SO(L_1)$ is given by 
\[ 
	       e^{2 \pi i t} \mapsto   e^{2\pi \i t}J_\ell \, , 
\]
where $\i = J_1 \cdots J_{\ell-1}$. 
For $0 \leq \eta \leq p/2$ we denote by 
\[
     \Map^0_*(\SS^1, P_\ell)_{\eta} \subset  \Map^0_*(\SS^1, P_\ell)
\]
the subspace of loops with winding number $\eta$
 (equal to $\dim_{\C} L_1 \leq p/2$). 
\end{defn}

We have a canonical homeomorphism 
\beq \label{S1S0}
     h: \Map^0_*(\SS^1, P_\ell) \approx  \Map_*(\SS^0, \Omega^0)  
\eeq
which  replaces the constant map on $L_0$ by the concatenation $\omega = \gamma  \ast \gamma^{-1}$, restricted to $L_0$. 
Since this is homotopic relative to $\{0,1\}$ to the constant loop with value $I \in \SO(L_0)$ by use of the explicit homotopy 
\[
     \omega_s = \gamma_s \ast \gamma_s^{-1} \, , \quad \gamma_s(t) := \gamma(st) \, , \quad 0 \leq s,t \leq 1 \ , 
\]
we conclude that the composition 
\[ 
     \Map^0_\ast(\SS^1, P_\ell) \buildrel h \over \approx  \Map_*(\SS^0, \hat P_k ) \stackrel{\Sigma_\ell}{\longrightarrow} \Map_*(\SS^1, P_\ell)
\]
is homotopic to the canonical inclusion 
\[
    j : \Map^0_*( \SS^1, P_\ell) \to \Map_*(\SS^1, P_\ell) \, . 
\]
Since homotopic maps induce the same maps on homotopy groups and a map $A \to B$ is $d$-connected if and only if it induces a bijection on $\pi_j$ for $0 \leq j \leq d-1$ and a surjection on $\pi_d$, we hence obtain the following version of Proposition \ref{highcon} together with 
Prop.\ \ref{corlk1} where $\Sigma_\ell\o h$ (cf.\ \eqref{S1S0}) is replaced by $j$.

\begin{prop} \label{611} 
Let $d \geq 1$ and $p \geq p_0(d)$ as in Prop.\ \ref{highcon}. 
Assume $s_\ell\.d \leq \eta \leq p/4$.
Then the composition 
\beq \label{theta0}
    \theta^0: \Map^0_\ast(\SS^1,P_\ell)_\eta 
    \buildrel j\over\longrightarrow \Map_*( \SS^1, P_\ell)_{\eta} 
    \stackrel{\theta}{\longrightarrow}    \Map_*(\SS^k , \SO_p)_{\eta} 
\eeq
is $d$-connected.
\end{prop}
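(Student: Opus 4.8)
The plan is to deduce Proposition \ref{611} by combining the three facts already at hand: first, $\theta$ is $d$-connected once $p \geq p_0(d)$ (Proposition \ref{highcon}); second, $\Sigma_\ell : \Map_*(\SS^0, \Omega^0)_w \to \Map_*(\SS^1, P_\ell)_{\eta = (p/4)-w}$ is $d$-connected whenever $p - 4|w| \geq 4 s_\ell\,d$ (Proposition \ref{corlk1}); and third, under the homeomorphism $h$ of \eqref{S1S0} the composition $\Sigma_\ell \circ h$ is homotopic to the inclusion $j : \Map^0_*(\SS^1, P_\ell) \to \Map_*(\SS^1, P_\ell)$, via the explicit homotopy $\omega_s = \gamma_s \ast \gamma_s^{-1}$ contracting the $L_0$-component. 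The first step is a bookkeeping translation of the winding-number indices: the homeomorphism $h$ carries $\Map^0_*(\SS^1, P_\ell)_\eta$ to $\Map_*(\SS^0, \Omega^0)_w$ with $w = (p/4) - \eta$ (since $\eta = \dim_{\C} L_1$ and $w(\tilde\gamma) = \tfrac12(\dim_{\C} L_0 - \dim_{\C} L_1) = (p/4) - \eta$), so the hypothesis $s_\ell\,d \leq \eta \leq p/4$ translates into $0 \leq w \leq (p/4) - s_\ell\,d$, whence $p - 4|w| = p - 4w \geq 4 s_\ell\,d$, exactly the hypothesis of Proposition \ref{corlk1}.

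With the index translation in place, I would argue as follows. Since $\Sigma_\ell \circ h \simeq j$ and homotopic maps induce the same homomorphisms on all homotopy groups, and since a map is $d$-connected precisely when it is bijective on $\pi_i$ for $i < d$ and surjective on $\pi_d$, the map $j$ restricted to the relevant winding-number component is $d$-connected because $\Sigma_\ell$ is (by Proposition \ref{corlk1}, using the translated inequality) and $h$ is a homeomorphism hence $\infty$-connected. Next, $\theta$ restricted to $\Map_*(\SS^1, P_\ell)_\eta$ lands in $\Map_*(\SS^k, \SO_p)_\eta$: this is because $\theta$ preserves winding number by construction of the stable winding number $\eta^s$ (one has $\eta^s \circ \theta = \eta$, Definition \ref{windingloop}), and it is $d$-connected on each such component since $\theta$ is globally $d$-connected and $\theta$ induces a bijection of path components for $p \geq p_0(d)$, so the restriction to a single union of path components remains $d$-connected. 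Finally, $\theta^0 = \theta \circ j$ is a composition of $d$-connected maps, hence $d$-connected.

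The one point requiring a little care is that being $d$-connected is not literally preserved under arbitrary restriction to a subspace; it is preserved under restriction to a union of path components precisely because $\theta$ induces a bijection on $\pi_0$ (so no component of the target is "missed") and $\pi_i$ for $i \geq 1$ only depends on the path component. I would spell this out: if $f : A \to B$ is $d$-connected and $\pi_0(f)$ is a bijection, then for any union of path components $B' \subseteq B$ with preimage $A' = f^{-1}(B')$, the restriction $f|_{A'} : A' \to B'$ is again $d$-connected, since on each matching pair of components the relevant $\pi_i$'s and the induced maps are unchanged. Applying this with $f = \theta$, $B' = \Map_*(\SS^k, \SO_p)_\eta$ handles the second map in the composition; the first map $j$ is handled directly by Proposition \ref{corlk1} transported through the homeomorphism $h$. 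Thus the main (and only mild) obstacle is this elementary but necessary remark about restricting a highly connected map to a union of path components — everything else is assembling results already proven.
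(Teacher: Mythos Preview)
Your proof is correct and follows essentially the same approach as the paper: the index translation $w = p/4 - \eta$ yielding $p - 4|w| = 4\eta \geq 4s_\ell d$, and then combining Propositions \ref{highcon} and \ref{corlk1} with the homotopy $\Sigma_\ell \circ h \simeq j$ (which the paper establishes immediately before stating the proposition). Your explicit remark about restricting a $d$-connected map to a union of path components is a worthwhile clarification that the paper leaves implicit.
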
 

\begin{proof} 
Notice that for $\eta \leq p/4$ the winding number $w = (p/4)-\eta$ is non-negative.  
Hence $p-4|w| = p-4w = 4\eta \geq 4s_\ell\.d$. The assertion now follows from Propositions \ref{highcon} and \ref{corlk1}.
\end{proof} 

By construction, elements $\phi \in  \Map_*(\SS^k , \SO_p)_\eta$ in the image of 
$\theta^0$ in \eqref{theta0} are described as follows. 
There is an orthogonal splitting 
\[
    \R^p = L_0 \oplus L_1
\]
into $\Cl_k$-invariant subspaces with $\dim_{\C} L_1 = \eta$  such that 
\[
      \phi = (\phi_0,\phi_1) : \SS^k \to \SO(L_0)\x \SO(L_1)\subset \SO_p
\]
where $\phi_0 \equiv I$ and $\phi_1 : \SS^k = \SS(\R\oplus\R^k) \to \SS(\End(L_1))$  
is an isometric embedding as a great sphere with image contained in $\SO(L_1)$ which sends
$e_0$ to $I$ and $e_i$ to $J_{i}|_{L_1}$ for $1 \leq i \leq k$. 

Proposition \ref{clifrep} then implies that the map $\phi_1$ is in fact equal to the Hopf map associated to the positive $\Cl_k$-representation $\rho$ restricted to $L_1$. 

\begin{defn} \label{hopfrho}
We call maps $\phi$ of this kind (that is $\phi = (\phi_0,\phi_1)$ on $L_0\oplus L_1$ with $\phi_0 \equiv I$ and $\phi_1$ a positive Hopf map on $L_1$) {\em affine Hopf maps} of 
stable winding number $\eta = \dim_{\C}L_1$ associated to $\rho$ (a positive Clifford representation). 
Let 
\[
      \Hopf_\rho(\SS^k,\SO_{p})_{\eta}  \subset  \Map_*(\SS^k , \SO_p)_{\eta} 
\]
be the subspace of affine Hopf maps of stable winding number $\eta$. 
\end{defn}

Summarizing we obtain the following result on deformations of mapping spaces
(recall $s_k = s_\ell$ from \eqref{Mk}). 

\begin{thm} \label{mindef} 
Let $d \geq 1$ and $k = 4m-1$. 
Then for all $p\geq p_0(d)$ and $s_k\.d \leq \eta \leq p/4$ and any positive Clifford representation $\rho : \Cl_k \to \R^{p\x p}$ the canonical inclusion 
\beq \label{HopfS}
	\Hopf_{\rho}(\SS^k,\SO_{p})_{\eta} \subset\Map_*(\SS^k,\SO_{p})_{\eta} 
\eeq
is $d$-connected. 
\end{thm}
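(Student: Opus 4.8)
The plan is to assemble the statement directly from the machinery built up in this section, essentially reading off the factorization of the canonical inclusion \eqref{HopfS} through the maps $\theta$ and $\theta^0$. First I would observe that the subspace $\Hopf_\rho(\SS^k, \SO_p)_\eta$ is, by the discussion preceding Definition \ref{hopfrho} together with Proposition \ref{clifrep}, precisely the image of the composition $\theta^0$ in \eqref{theta0}; more carefully, every affine Hopf map of stable winding number $\eta$ arises as $\theta^0(\omega)$ for a loop $\omega \in \Map^0_*(\SS^1, P_\ell)_\eta$ (namely the loop determined by the splitting $\R^p = L_0 \oplus L_1$ with $\dim_\C L_1 = \eta$), and conversely. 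So $\Hopf_\rho(\SS^k, \SO_p)_\eta = \theta^0\bigl(\Map^0_*(\SS^1, P_\ell)_\eta\bigr)$.

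Next I would set up the commuting diagram relating the inclusion in question to the $d$-connected map $\theta^0$. The key point is that $\theta^0$ factors as $\theta \circ j$, where $j : \Map^0_*(\SS^1, P_\ell)_\eta \hookrightarrow \Map_*(\SS^1, P_\ell)_\eta$ is the canonical inclusion and $\theta$ is the iterated geodesic suspension from \eqref{theta}. By Proposition \ref{611}, under the hypotheses $p \geq p_0(d)$ and $s_\ell \cdot d \leq \eta \leq p/4$, the composite $\theta^0$ is $d$-connected. What I want instead is $d$-connectedness of the inclusion $\Hopf_\rho(\SS^k,\SO_p)_\eta \subset \Map_*(\SS^k,\SO_p)_\eta$. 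The bridge is that $\theta$ itself is $d$-connected by Proposition \ref{highcon} (again for $p$ large), and it carries $\Map^0_*(\SS^1,P_\ell)_\eta$ onto $\Hopf_\rho(\SS^k,\SO_p)_\eta$ and $\Map_*(\SS^1,P_\ell)_\eta$ onto (a subspace identified with) $\Map_*(\SS^k,\SO_p)_\eta$; I should be slightly careful here since $\theta$ is only a weak equivalence onto its image after the identification via Lemma \ref{1}, but the relevant statement is about homotopy groups, which is exactly what $d$-connectedness controls.

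The clean way to finish is a five-lemma / two-out-of-three argument on homotopy groups. Consider the square of inclusions
\[
\begin{array}{ccc}
\Map^0_*(\SS^1,P_\ell)_\eta & \hookrightarrow & \Map_*(\SS^1,P_\ell)_\eta \\
\downarrow & & \downarrow \\
\Hopf_\rho(\SS^k,\SO_p)_\eta & \hookrightarrow & \Map_*(\SS^k,\SO_p)_\eta
\end{array}
\]
where the two vertical maps are (the restrictions of) $\theta$. The top horizontal map $j$ composed with the right vertical map is $\theta^0$, which is $d$-connected; the right vertical map is $d$-connected by Proposition \ref{highcon}. On homotopy groups $\pi_q$ for $q \leq d-1$ this forces the top map $j$ to be a bijection, and hence — since the left vertical map is then identified, via the bijections induced by $\theta$ on both sides, with $j$ — the bottom map to be a bijection on $\pi_q$ for $q\leq d-1$ and a surjection on $\pi_d$. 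That is precisely $d$-connectedness of \eqref{HopfS}.

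The main obstacle I anticipate is the bookkeeping in the bottom-left corner: one must verify that $\theta$ genuinely restricts to a surjection $\Map^0_*(\SS^1,P_\ell)_\eta \twoheadrightarrow \Hopf_\rho(\SS^k,\SO_p)_\eta$ whose homotopy-theoretic behavior is controlled, i.e.\ that the identification of $\Hopf_\rho$ as the image of $\theta^0$ is compatible with the weak-equivalence identifications supplied by Lemma \ref{1} and with the block-diagonal normal forms from Definition \ref{loops}. Once that compatibility is in place — essentially the content of the paragraph following Proposition \ref{611}, invoking Proposition \ref{clifrep} to pin down $\phi_1$ as a genuine Hopf map — the diagram chase is routine. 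A secondary technical point is making sure the winding-number bookkeeping ($w = p/4 - \eta \geq 0$, hence $p - 4|w| = 4\eta \geq 4 s_\ell d$) matches across all the identifications, but this has already been recorded in the proof of Proposition \ref{611}, so it can simply be cited.
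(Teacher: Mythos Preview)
The paper's proof is a single sentence: it cites Prop.~\ref{611} together with the observation that $\Hopf_\rho(\SS^k,\SO_p)_\eta$ is precisely the image of $\theta^0$. Your commutative-square argument is more elaborate than needed and, as written, has a gap: from $\theta^0$ and $\theta$ both being $d$-connected you correctly deduce that $j_*$ is bijective on $\pi_q$ for $q<d$, but the step ``hence the bottom map is bijective on $\pi_q$'' does not follow from the diagram alone --- for that you would need the left vertical map to be \emph{surjective} on $\pi_q$, which your chase has not established (it only gives injectivity of the left vertical and surjectivity of the bottom map).

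What actually makes the argument work, and what the paper uses tacitly, is stronger: the restriction $\theta^0 : \Map^0_*(\SS^1,P_\ell)_\eta \to \Hopf_\rho(\SS^k,\SO_p)_\eta$ is a \emph{homeomorphism}. Both spaces are canonically identified with the same Grassmannian of $(J_1,\dots,J_\ell)$-invariant subspaces $L_1\subset\R^p$ with $\dim_\C L_1=\eta$ (since $\rho$ is positive, $J_k=J_1\cdots J_\ell$, so $\Cl_k$-invariance coincides with $\Cl_\ell$-invariance), and the iterated geodesic suspension preserves this parametrization; alternatively, each $\Sigma_{J_i}$ is injective with continuous inverse given by midpoint evaluation. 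Once this homeomorphism is in hand, $d$-connectedness of the inclusion \eqref{HopfS} is immediate from Prop.~\ref{611}, and your square and two-out-of-three argument become superfluous. You do flag exactly this bottom-left bookkeeping as the main obstacle, so you have located the crux; the resolution is simply more direct than a five-lemma manoeuvre.
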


\proof
The theorem follows from Prop.\ \ref{611} since $\Hopf_{\rho}(\SS^k,\SO_{p})_{\eta}$ is the image of $\theta^0$ in \eqref{theta0}.
\endproof

\section{Vector bundles over sphere bundles: Clutching construction} \label{spinor}

The deformation theory for maps $\phi : \SS^k \to \SO_{p}$ can be applied to the classification of oriented Euclidean vector bundles over $n$-spheres where from now on 
$$
	n=k+1 = 4m\,.
$$ 
Choose $N = e_{0}$ and $S = -e_{0}$ (``north and south poles'') in $\SS^n\subset\R^{n+1}$
where $e_0,\dots,e_n$ is the standard basis of $\R^{n+1}$. 
Let 
\bea
	\D^n_\pm &=& \{v\in\SS^n: \pm\<v,e_{0}\> \geq 0\},\cr
	\SS^{n-1}&=& \{v\in \SS^n: \<v,e_{0}\> = 0\} = \D^n_+\cap\D^n_- \nonumber
\eea
(``hemispheres and equator''). 
Let $\mathscr{E}\to\SS^n$ be an oriented Euclidean vector bundle with fibre $\R^{p}$. 
Since $\D^n_\pm$ is contractible, $\mathscr{E}|_{\D^n_\pm}$ is a trivial bundle, isomorphic to $\D^n_\pm\x E_\pm$ where 
\[
   E_\pm = \mathscr{E}_{\pm e_{n}} \cong \R^{p} \, , 
\] 
pulled back to $\D_{\pm}^n$ along the radial projections onto the midpoint. 
The bundle $E$ is obtained from these trivial bundles by identifying $\{v\}\x E_+$ to $\{v\}\x E_-$ via an oriented orthogonal map, thus by a mapping $\phi : \SS^{n-1}\to \SO(E_+,E_-)\cong \SO_{p}$ called {\it clutching function} which is well defined by $\mathscr{E}$ 
up to homotopy. In this situation we write 
\[
     \mathscr{E} = (E_+ , \phi, E_-) \, . 
\]

\begin{ex} \label{clrepsphere} 
Let $\rho: \Cl_n \to \End(L)$ be an orthogonal $\Cl_n$-representation. 
Since $n\!=\!4m$, the longest Clifford product $\omega := e_1\cdots e_n$ (the ``volume element'') commutes with all elements of $\Cl_n^+$ (the subalgebra
spanned by the products of even length) and has order two, $\omega^2 = 1$.\footnote
	{12341234\,=\,--11234234\,=\,234234\,=\,223434\,=\,--3434\,=1, 12345678\,=\,56781234.}
The $(\pm1)$-eigenspaces\footnote
  {Superscripts $\pm$ will always refer to a splitting induced by the volume element.} 
$L^\pm$ of $\rho(\omega)$  are invariant under $\Cl_n^{+}$, and $\rho(v)$  interchanges the eigenspaces for any $v\in\SS^{n-1}$. Using $\mu := \rho|_{\SS^{n-1}}$ we obtain a bundle $\mathscr{L}\to\SS^n$,
\[
\label{hopfdefalt} 
	\mathscr{L} = (L^+,\mu,L^-) 
\]
This is called the {\em Hopf bundle} over $\SS^n$ associated to the Clifford representation $\rho$.
\end{ex}

\begin{rem} \label{Lplus} 
This bundle is isomorphic to $(L^+, \mu_+, L^+)$, where 
\[
     \mu_+ := \mu(e_1)^{-1} \. \mu : \SS^{n-1} \to \SO(L^+) \cong \SO_p
\]
is the Hopf map $\mu_+ = \rho_+|_{\SS^k}$ ($k=n-1$) associated to the representation $\rho_+ : \Cl_k \cong \Cl_n^+ \to \SO(L_+)$,
\begin{equation}  \label{rho+}
\rho_+ : e_i \mapsto \rho(-e_1e_{i+1}), \ \ i = 1, \ldots, k. 
\end{equation} 
Note that $\rho_+$ is positive,\footnote 
	{E.g.\ $k\!=\!7$: Putting $\mu(e_i) =:i$ we have
	$$\mu_+ (e_1)\cdots\mu_+(e_7) = 
	(-1)^7 12\underline{131}4\underline{151}6\underline{171}8 
	= -12\underline{3}4\underline{5}6\underline{7}8 = -\id_{L^+}\ \textrm{ on } L^+.$$} 
that means  its  Clifford family $J_i = \rho(e_i)$, $i = 1, \ldots, k$, on $L^+$ satisfies 
\[
    J_k = J_1 \cdots J_{k-1} \, . 
\]
In particular the stable winding number of $\mu_+$ (see Definition \ref{windingloop})  is given by 
\[
   \label{formulastable}    \eta^s (\mu_+) = \2  \dim L^+  = \frac{1}{4}\dim L  > 0 \, . 
\]
Compare Example \ref{winding_for_hopf}.
 The positivity of this winding number will become important later in the proof of Theorem \ref{main}. 
\end{rem} 

We now replace the sphere $\SS^n$ by a sphere bundle with two antipodal sections over a finite CW-complex $X$.
More specifically, starting from an $n$-dimensional Euclidean vector bundle $V\to X$ we glue two copies $\D_{\pm}  V$ of its disk bundle $\D V \to X$ along the common boundary, the unit sphere bundle $\SS V$, by the identity map. 
Thus we obtain an $\SS^n$-bundle $\hat V = \SS(\R \oplus V)\to X$,
\beq \label{hatV}
	\hat V = \D_+V \cup_{\SS V} \D_-V.
\eeq
We obtain two sections $s_\pm : X\to \hat V$ of the bundle $\hat V \to X$, which we regard as north and south poles,  defined as the zero sections of $\D^\pm V$. 

Let 
\[
    \mathscr{E} \to \hat V
\]
be a Euclidean vector bundle over the total space of $\hat V \to X$.
Let 
\[
     E_\pm = s_\pm^*\mathscr{E} \to X 
\]
which we sometimes tacitly  pull back to bundles $E_\pm\to \D_{\pm}V$ along the canonical fiberwise projection maps $\D_{\pm} V \to s_\pm(X)$. 

Since there are -- up to homotopy unique -- bundle isometries $\mathscr{E}|_{\D_{\pm}  V} \cong E_\pm$ restricting to the identity over $\{s_{\pm}\}$, the bundle $\mathscr{E}$ is obtained from $E_\pm$ by a clutching map $\sigma$, that is a section of the bundle 
\[
     \Map(\SS V,\Or(E_+,E_-)) \to X \, , 
\]
which is uniquely determined up to fiberwise homotopy. 

Hence,  for any $x\in X$ we have a map $\sigma_x : SV_x \to \Or((E_+)_x,(E_-)_x)$. 
Note that we may equivalently consider the clutching map as a bundle isometry $\sigma : E_+|_{\SS V}\to E_-|_{\SS V}$. 
In this situation  we write 
\[
    \mathscr{E} = (E_+,\sigma,E_-) \, . 
\]

Vice versa, if $E_{\pm} \to X$ are oriented Euclidean bundles and 
\[
   \sigma \in \Gamma (   \Map(\SS V,\Or(E_+,E_-)) ) 
\]
a clutching map, then we obtain a vector bundle $\mathscr{E} = (E_+, \sigma, E_-) \to \hat V$ by gluing the pull back bundles  of $E_{\pm} \to \D_{\pm}  V$ along $\SS  V $ by $\sigma$.

\begin{ex} 
A particular case is $E_+ = E_- =: F$ and $\sigma(v) = \id_{F_v}$ for all $v\in \SS V$. 
We write $(F,\id,F)$ for this triple.
Since $F_v$ can be identified with $F_{s_{\pm}}$ for all $v \in \D^\pm V$, this bundle is trivial over every fibre  $\hat V_x$, $x \in X$, hence it is isomorphic to a bundle over $X$, pulled back to $\hat V$ by the projection $\pi : \hat V \to X$. 

Vice versa, for any vector bundle $F \to X$, the pull-back bundle $\pi^*F$ is given by the triple $(F,\id,F)$. 
\end{ex}

For a generalization of Example \ref{clrepsphere} to sphere bundles we recall the following definition.

\begin{defn} \label{Lambda}
Let $\Cl(V) \to X$ be the Clifford algebra bundle associated to the Euclidean bundle $V$ with fibre $\Cl(V)_x = \Cl(V_x)$ over $x \in X$. 

A real  {\em $\Cl(V)$-Clifford module bundle} is a vector bundle $\Lambda \to X$ such that each fibre $\Lambda_x$, $x \in X$, is a real $\Cl(V_x)$ module. 
More precisely, we are given a bundle homomorphism $\mu : \Cl(V) \to \End(\Lambda)$ which restricts to an algebra homomorphism in each fibre. 

We may and will assume that $\Lambda$ is equipped with a Euclidean structure such that  Clifford multiplication with elements in $\SS V$ is orthogonal. 
\end{defn}

{From now on we will in addition assume that $V \to X$ is oriented.} 
Any oriented orthonormal frame\footnote 
	{A frame (basis) $b = (b_1,\dots,b_n)$ of $V_x$ will be considered
	as a linear isomorphism $b : \R^n \to V_x$, $v \mapsto bv = \sum_j v_jb_j$.}
of $V|_U$ over $U\subset X$ induces an orientation preserving orthogonal trivialization  
\[
    b: U\x \R^n \cong  V|_U \, . 
\]
Thus $L := \Lambda_{x_o}$ with some fixed $x_o\in U$ becomes a $\Cl_n$-module.
Consider the bundle 
\[ 
     \Iso_{\Cl_n}(\underline{L},\Lambda|_U) \to U 
\]
whose fibre over $x\in U$ is the space of $\Cl_n$-linear isomorphisms $\phi_x : L\to\Lambda_x$. 
If $U$ is contractible it has a section $\phi$ which intertwines the $\Cl_n$-module multiplications (denoted by $\.$) on $L$ and $\Lambda_x$, $x\in U$: for all $\xi \in L$ and $v\in \R^n$ and $x\in U$ we have
\beq \label{intertw}
	  \phi_x(v\.\xi) = b_x(v)\.\phi_x(\xi).
\eeq
In other words: We obtain Euclidean trivializations 
\begin{equation}  \label{triv}
     V|_U \cong U \x\R^n, \qquad
   \Cl(V)|_U  \cong U \times \Cl_n  , \quad 
   \Lambda|_U \cong  U \times L
\end{equation} 
with a $\Cl_n$-module $L$ such that the $\Cl(V)$-module structure on $\Lambda$ corresponds to the $\Cl_n$-multiplication on $L$. 
Notice that in particular the $\Cl_n$-isomorphism type of $L$, the {\em typical fibre}  of $\Lambda \to X$  is uniquely determined over each path component of $X$. 

Let $n = 4m$. 
Since $V \to X$ is oriented, the volume element $\omega \in \Cl_n$ defines a section (``volume section'') of $\Cl(V) \to X$, and the $\pm 1$-eigenspaces define the positive and negative Clifford algebra bundles $\Cl^{\pm}(V)  \to X$ and subbundles $\Lambda^{\pm} \to X$ of $\Lambda \to X$, which are invariant under $\Cl^+(V)$ and such that for every $v \in V_x$ the endomorphism $\mu(v)$ interchanges $\Lambda_x^+$ and $\Lambda_x^-$. 
Correspondingly the typical fibre $L$ decomposes as $L = L^+ \oplus L^-$ such that the local trivializations 
\eqref{triv} preserve positive and negative summands.

\begin{defn} \label{hopfbun} 
For $n = 4m$ the bundle $\mathscr{L}\to\hat V$ defined by the triple
\beq
	\mathscr{L}  = (\Lambda^+,\mu,\Lambda^-)
\eeq 
is called the {\em Hopf bundle} associated to the $\Cl(V)$-Clifford module bundle $\Lambda \to X$. 
Note that each $\mathscr{L}|_{\hat V_x} \to \hat V_x $, $x \in X$,  is a  Hopf bundle  in the sense of Example \ref{clrepsphere}  after passing to local trivializations. 
\end{defn}

\begin{ex} \label{ClHopf} As a particular example we consider the $\Cl(V)$-module bundle 
\[
   \Lambda = \Cl(V) \to X
\]
 with $\Cl(V)$-module structure $\mu$ given by (left) Clifford multiplication on $\Cl(V)$. 
Note that $\Lambda^{\pm} = \Cl^{\pm}(V) \to X$ 
where $\Cl^+(V)$ ($\Cl^-(V)$) is generated as a vector bundle by the Clifford products of even (odd) length.

We define the {\em Clifford-Hopf bundle} $\mathscr{C} \to \hat V$ as the triple 
\beq \label{cliffhopf}
	\mathscr{C} = (\Cl^+(V),\mu,\Cl^-(V))
\eeq
where $\mu : \SS V \to \SO(\Cl^-(V),\Cl^+(V))$ is the (left) Clifford multiplication.
\end{ex}

\begin{ex} \label{spinorb}
Recall  that a {\em spin structure} on $V \to X$ is given by  a two fold cover
\[
    P_{\Spin}(V) \to P_{\SO}(V)
\]
of the $\SO(n)$-principal bundle of oriented orthonormal frames in $V$, which is equivariant with respect to the double cover $\Spin_n \to \SO_n$. 

Consulting Theorem \ref{perclif} we see that for $n = 4m$ there is exactly one irreducible $\Cl_n$-module $S = S_n$, and we obtain the {\em spinor bundle} 
$$
	\Sigma  = P_{\Spin}(V) \x_{\Spin_n} S.
$$
Since the Clifford algebra bundle can be written in the form 
\[
    \Cl(V) = P_{\Spin}(V) \x_{\Spin_n} \Cl_n
\]
where $\Spin_n\subset \Cl_n$ acts on $\Cl_n$ by conjugation, $\Sigma$ becomes a $\Cl(V)$-module bundle by setting
\[
     [p,\alpha]\.[p,\sigma] = [p,\alpha\sigma] 
\]
for $p\in P_{\Spin}(V)$, $\alpha\in \Cl_n$ and $\sigma\in S$.
\end{ex} 

\begin{defn} \label{spinorhopf} 
The {\em Spinor Hopf bundle over} $\hat V$ is the Euclidean vector bundle $\mathscr{S} \to \hat V$ defined by the triple
\beq \label{hopfdef2}
	\mathscr{S} = (\Sigma^+,\mu,\Sigma^-)
\eeq 
where $\mu : \SS V \to \SO(\Sigma^+,\Sigma^-)$ is the restriction of the Clifford  module multiplication. 

\end{defn}

\begin{defn} 
Let $\Lambda$ be a $\Cl(V)$-module bundle with Clifford multiplication $\mu$ and a $\Cl(V)$-invariant orthogonal decomposition
$$
	\Lambda = \Lambda_0 \oplus \Lambda_1
$$
into $\Cl(V)$-submodule bundles.
This induces a triple 
\begin{equation} \label{afftrip} 
    \big(\Lambda^+ , \omega,   \Lambda_0^+  \oplus \Lambda_1^-\big) ,\ \ 
	\omega = \id_{\Lambda_0^+}\oplus\mu|_{\Lambda_1^+} 
\end{equation} 
equal to the direct sum bundle
\[
 (\Lambda_0^+,\id,\Lambda_0^+)  \oplus  (\Lambda_1^+, \mu , \Lambda_1^-) \to X \, . 
\]
We call $(\Lambda^+ , \omega,  \Lambda_0^+  \oplus \Lambda_1^-)$ the  {\em affine Hopf bundle} associated to the decomposition $\Lambda = \Lambda_0 \oplus \Lambda_1 \to X$.
\end{defn}

In the next section we will prove that {\em every} vector bundle over $\hat V$ is -- after suitable stabilization -- isomorphic to an affine Hopf bundle. 

\section{Vector bundles over sphere bundles and affine Hopf bundles} 
 \label{difficult} 

 Let $V \to X$ be an oriented Euclidean vector bundle of rank $n = 4m$ with associated sphere bundle $\hat V = \SS( \R \oplus V) \to X$. 

\begin{defn} \label{stabiso} 
\begin{enumerate} 
   \item Two {Euclidean} 
vector bundles $E,\tilde E$ over $X$ or $\hat V$  are called {\em stably isomorphic}, written 
   \[
            E\cong_s\tilde E \, , 
   \]
       if $E\oplus\underline{\R}^q \cong \tilde E\oplus \underline{\R}^q$ for some trivial vector bundle $\underline{\R}^q\to X$.
   \item Two  $\Cl(V)$-module bundles $\Lambda,\tilde \Lambda \to X$ are  called {\em stably isomorphic}, written $\Lambda \cong_s\tilde\Lambda$, if there is some $\Cl(V)$-linear isomorphism 
\[
       \Lambda \oplus \Cl(V)^q \cong \tilde\Lambda \oplus \Cl(V)^q 
\]
for some $q$.        
\end{enumerate} 
\end{defn}

The following result is central in our paper. 

\begin{thm} \label{main}
Let 
\[
    \mathscr{E} \to \hat V 
\]
be a Euclidean vector bundle. 
Then   possibly after adding copies of $\mathscr{C}$ (the Clifford-Hopf bundle, see Example \ref{ClHopf}) and  trivial vector bundles $\underline{\R}$ to $\mathscr{E}$ we have 
\beq \label{0+1H}
	\mathscr{E}  \cong {E}\oplus \mathscr{L} 
\eeq 
for a Euclidean vector bundle ${E}\to X$ (pulled back to $\hat V$) and a  Hopf bundle $\mathscr{L}\to \hat V$ associated to some $\Cl(V)$-module bundle $\Lambda \to X$.

Let  ${E,\tilde E}\to X$ be Euclidean vector bundles  and $\mathscr{L},\mathscr{\tilde L} \to \hat V$ Hopf bundles  corresponding to $\Cl(V)$-module bundles  $\Lambda,\tilde\Lambda\to X$. 
If  
\begin{equation} \label{unique} 
     {E} \oplus \mathscr{L} \cong_s {\tilde E} \oplus \mathscr{\tilde L} 
\end{equation}
then ${E} \cong_s {\tilde E}$ and ${\Lambda} \cong_s {\tilde\Lambda}$. 
\end{thm}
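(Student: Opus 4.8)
The plan is to reduce the statement, fibrewise, to the deformation theory of mapping spaces developed in Section~\ref{defmapspace}, using the clutching description of vector bundles over $\hat V$ from Section~\ref{spinor}. First I would fix a good cover $\{U_\alpha\}$ of $X$ by contractible open sets over which $V$ (and hence $\Cl(V)$) trivializes as in \eqref{triv}, so that over each $U_\alpha$ the clutching map $\sigma$ of $\mathscr{E}=(E_+,\sigma,E_-)$ becomes a family of maps $\SS^{n-1}=\SS^k\to\SO_p$ (after trivializing $E_\pm$), where $k=n-1=4m-1$. The key point is that over the top cell one must know the \emph{stable winding number} $\eta^s$ of these clutching maps; I would show first that, after stabilizing (adding copies of $\mathscr{C}$ and of $\underline\R$), one may arrange $s_k\cdot d\le \eta\le p/4$, where $d$ exceeds $\dim X$ and $p_0(d)$ is as in Proposition~\ref{highcon}. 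Adding a copy of $\mathscr C$ increases $p$ by $2^n$ and, crucially, shifts $\eta^s$ by a \emph{positive} amount (the stable winding number of the Clifford--Hopf bundle is $\frac14\dim\Cl_n>0$, by Example~\ref{ClHopf}, Remark~\ref{Lplus} and Remark~\ref{windingadditive}); adding a trivial line bundle increases $p$ without changing $\eta^s$. So finitely many such stabilizations force $\eta$ into the required window uniformly, since $\eta^s$ is locally constant on the (compact) base.

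Once inside this range, Theorem~\ref{mindef} says the inclusion $\Hopf_\rho(\SS^k,\SO_p)_\eta\subset\Map_*(\SS^k,\SO_p)_\eta$ is $d$-connected for a positive $\Cl_k$-representation $\rho$; equivalently, as in the discussion following Definition~\ref{dconnected}, for the CW-pair $(X,\emptyset)$ with $\dim X<d$ any section of the bundle of mapping spaces $\Map_*(\SS^k,\SO_p)_\eta$ over $X$ deforms, through sections, into a section of the subbundle of affine Hopf maps. I would phrase this as a \emph{fibrewise} deformation: the relevant fibre bundle over $X$ has fibre $\Map_*(\SS^k,\SO_p)_\eta$ (trivialized over each $U_\alpha$ via the Clifford trivializations \eqref{triv}, which carry the positive $\Cl_k$-representation $\rho_+$ of Remark~\ref{Lplus} along consistently, since $V$ is oriented and $\Cl(V)$ has a globally defined volume section); the subbundle of affine Hopf maps is exactly the one whose total space of sections produces affine Hopf bundles in the sense of Definition~\ref{afftrip}. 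Because the pair is $(\dim X)$-dimensional with $\dim X<d$, the obstruction-theoretic argument (induction over cells plus homotopy extension, exactly as in Definition~\ref{dconnected}) deforms $\sigma$ into a clutching section of affine-Hopf type. Unwinding what such a section is: over $X$ there is a $\Cl(V)$-module bundle $\Lambda$ (built from the $\eta$-dimensional summands $L_1$ glued over the $U_\alpha$, which assemble into a genuine $\Cl(V)$-subbundle because the gluing is by $\Cl_k$-linear maps) and a $\Cl(V)$-invariant splitting $\Lambda=\Lambda_0\oplus\Lambda_1$, together with a vector bundle $E$ over $X$ (absorbing the $L_0$-part, which deforms to the identity clutching), such that $\mathscr E\cong E\oplus\mathscr L$. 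This proves the existence part \eqref{0+1H}.

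For uniqueness, suppose $E\oplus\mathscr L\cong_s \tilde E\oplus\tilde{\mathscr L}$. Restricting to a point-fibre $\hat V_x=\SS^n$ and taking the restriction to the two poles $s_\pm$, one reads off $E_\pm\cong_s E\oplus\Lambda^\pm$ and similarly for the tilde; so already $E\oplus\Lambda^+\cong_s\tilde E\oplus\tilde\Lambda^+$ as bundles over $X$. To separate $E$ from $\Lambda$, I would compute the stable winding number (now fibrewise over $X$, as in Definition~\ref{windingloop}, giving a locally constant function on $X$, hence a constant on each component): it is additive, vanishes on pulled-back bundles $\pi^*E$, and on a Hopf bundle $\mathscr L$ equals $\tfrac14\dim\Lambda$ by Remark~\ref{Lplus}; therefore $\dim\Lambda=\dim\tilde\Lambda$ fibrewise. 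With equal ranks, Theorem~\ref{mindef} again — or rather the injectivity on $\pi_0$ of the relevant mapping-space inclusions, combined with the fact that positive $\Cl_k$-representations of a fixed dimension are all isomorphic — forces $\Lambda\cong_s\tilde\Lambda$ as $\Cl(V)$-module bundles, and then cancelling (using that stable isomorphism of Euclidean bundles over a finite complex is cancellative) gives $E\cong_s\tilde E$.

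\textbf{Main obstacle.} The delicate step is the \emph{globalization}: promoting the fibrewise, pointwise deformation supplied by Theorem~\ref{mindef} to a section-level deformation over all of $X$, and — inseparable from this — checking that the local $L_1$-summands actually glue to a $\Cl(V)$-subbundle $\Lambda_1\subset\Lambda$ defined over $X$ rather than merely over each $U_\alpha$. This requires that the deformations be carried out compatibly with the transition data of the Clifford trivializations \eqref{triv}, which is exactly where orientability of $V$ (so that the positive representation $\rho_+$ and the volume section are globally coherent) enters; the obstruction-theory bookkeeping of Definition~\ref{dconnected} is what makes it go through, but it has to be set up carefully so that "affine Hopf map" is recognized as a \emph{structured subbundle} of the mapping-space bundle, not just a property checkable fibre by fibre.
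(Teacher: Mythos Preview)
Your existence sketch is close to the paper's argument, but you skip a preparatory step that is exactly what resolves the ``main obstacle'' you flag at the end. Before any deformation is attempted, the paper first embeds $E_+$ into a bundle of the form $\Lambda^+$ for some $\Cl(V)$-module bundle $\Lambda\to X$ (by adding to $\mathscr{E}$ the pullback $\pi^*F$ with $F=(E_+)^\perp\subset\Lambda^+$). Only once $E_+=\Lambda^+$ does the phrase ``affine Hopf map'' name a globally well-defined subbundle of the mapping-space bundle, because the positive $\Cl_k$-representation $\rho_+$ is then carried by $\Lambda^+$ itself rather than by local trivializations. Without this step your proposed gluing of the local $L_1$-summands into a $\Cl(V)$-subbundle is circular: the transition maps of $E_+$ have no reason to be $\Cl_k$-linear. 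After this reduction the paper proceeds exactly as you suggest, by cell-by-cell induction (your obstruction-theoretic phrasing is equivalent), invoking Theorem~\ref{mindef} on each new cell relative to its boundary.

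Your uniqueness argument has a genuine gap. From $\dim\Lambda=\dim\tilde\Lambda$ and ``injectivity on $\pi_0$'' of the inclusion $\Hopf_\rho\subset\Map_*$ you cannot conclude $\Lambda\cong_s\tilde\Lambda$: that injectivity is a statement about a single fibre, whereas two $\Cl(V)$-module bundles of equal rank over $X$ can be non-isomorphic (e.g.\ $E\otimes\Sigma$ versus $\tilde E\otimes\Sigma$ with $E\not\cong\tilde E$). What is actually needed is the relative version of the existence argument over $X\times[0,1]$: the stable isomorphism $E\oplus\mathscr{L}\cong_s\tilde E\oplus\tilde{\mathscr{L}}$ yields a triple over $\hat V\times[0,1]$ restricting to the two given affine Hopf triples on $\hat V\times\{0,1\}$, and running the deformation with $d\ge\dim X+1$ relative to $X\times\{0,1\}$ produces a $\Cl(V)$-module bundle over $X\times[0,1]$ interpolating between $\Lambda$ and $\tilde\Lambda$. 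There is also a preliminary step you omit: the isomorphism $\phi=(\phi_+,\phi_-)$ between the two triples has $\phi_+\neq\id$ in general, and one must first stabilize (the paper's Lemma~\ref{isomorphism}) to reduce to $\phi_+=\id$ before the clutching maps can be compared at all.
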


\begin{rem} \label{83}
When $V$ carries a spin structure, we will see in Prop.\ \ref{meven} (which is self consistent) 
that $\Lambda \cong E\otimes\Sigma$ when $m$ is even, and
$\Lambda \cong E\otimes_{\H}\Sigma$ when $m$ is odd, where 
{$E$ is some Euclidean vector bundle over $X$ and} 
$\Sigma$ denotes the Spinor bundle
associated to $P_{{\Spin}}(V)$, cf. Example \ref{spinorb}. Therefore 
$$
	\mathscr L \cong \left\{\begin{matrix} 
		E\otimes\mathscr S & \textrm{ when $m$ is even} \cr
		E\otimes_{\H}\mathscr S & \textrm{ when $m$ is odd}\end{matrix}\right.
$$
where $\mathscr S$ denotes the spinor-Hopf bundle (cf. Definition \ref{spinorhopf})
and $E$ a vector bundle over $X$, pulled back to $\hat V$. Further, the stable isomorphism type
of $\Lambda$ determines the stable isomorphism type of $E$, see Lemma \ref{95}.
\end{rem}

\bigskip
The proof of Theorem \ref{main} will cover the remainder of this section.
We represent $\mathscr{E}$ by a triple $(E_+, \sigma, E_-)$ for Euclidean vector bundles $E_\pm\to X$, which we consider as bundles over $\D_{\pm}  V$ in the usual way.

\begin{rem} \label{homopm}
Let $\mathscr{E}  = (E_+,\sigma,E_-)$ and $\tilde{\mathscr E}=(\tilde E_+,\tilde\sigma,\tilde E_-)$ be triples. 
Then an orthogonal vector bundle homomorphism  
\[
   \phi : \mathscr{E} \to \tilde{\mathscr E} 
\]
consists of a pair $(\phi_+, \phi_-)$ of orthogonal bundle homomorphisms $\phi_\pm : E_\pm \to \tilde E_\pm$ over $\D V$ such that the following diagram over $\SS V$ commutes:
\beq \label{isomorph}
\begin{matrix}
   \xymatrix{  E_+|_{\SS V}   \ar@{>}[d]_{\sigma}   \ar@{>}[r]^{\phi_+}  & 
	\tilde E_+|_{\SS V}   \ar@{>}[d]^{\tilde\sigma}   \\
  	E_-|_{\SS V}    \ar@{>}[r]^{\phi_-}  &   \tilde E_-|_{\SS V}  } \end{matrix}
\eeq
If $\phi_+^t : E_+ \to \tilde E_+$, $t\geq 0$, is a homotopy of bundle isomorphisms over $\D V$ with $\phi_+ = \phi_+^0$, we may replace $\phi$ by $\phi^t = (\phi_+^t,\phi_-)$, 
replacing the clutching map 
$\tilde\sigma = \phi_-\sigma\phi_+^{-1}$ over $\SS V$ by the homotopic clutching map 
$\tilde\sigma^t = \tilde\sigma\phi_+(\phi_+^t)^{-1}$ which does not change $\tilde{\mathscr E}$.
\end{rem} 

\bigskip
In the proof of Theorem \ref{main} we can assume without loss of generality that $X$ is path connected. 
Let 
\[
    V|_U \cong U \times \R^n \, , \quad {E_+}|_U \cong U \times \R^p 
\]
be orthogonal trivializations over a connected open subset $U \subset X$, where the first trivialization is assumed to be orientation preserving. 
(We are not assuming that the the bundle ${E_+} \to X$ is orientable). 
We obtain an induced trivialization 
\[
      E_-|_U \cong U \times \R^p 
\]
using the isomorphisms $\sigma_x(e_1) : (E_+)_x \to (E_-)_x$ for $x \in U$ where $e_1 \in \R^n$ is the first standard basis vector. 
With respect to theses trivializations $\sigma|_U$ is given by a map 
\beq \label{sigmaU}
     \sigma|_U : U \to \Map_*( \SS^{n-1} , \SO_p) \, . 
\eeq
Here we recall that $\SS^{n-1}$ is connected since $n \geq 4$ by assumption so that the local clutching function has indeed values in $\SO_p$ rather than $\Or_p$. 
The stable winding number $\eta^s(\sigma_x) \in \Z$ (see Definition \ref{windingloop}) is independent of the chosen trivializations and constant over $U$. 
It is hence an invariant of the triple $\mathscr{E} = (E_+, \sigma, E_-)$. 

Set $d = \dim X$. 
After adding trivial bundles and Clifford-Hopf bundles 
to $\mathscr{E}$ we can assume that the stable mapping degree $\eta = \eta^s( \sigma)$ satisfies the conditions 
\beq \label{d,p} 
	s_k\.d \leq \eta \leq p/4
\eeq 
from Theorem \ref{mindef} (with $k := n-1$, recall $s_k = s_\ell$ for $\ell = k-1$), compare Remarks \ref{windingadditive} and \ref{Lplus}. 
Furthermore, we can assume that $p$ is divisible by $s_k$. 
In particular, by Theorem \ref{mindef}, with these choices of $p$ and $\eta$ the canonical inclusion 
\[
	\Hopf_{\rho}(\SS^k,\SO_{p})_{\eta} \subset\Map_*(\SS^k,\SO_{p})_{\eta} 
\]
is $d$-connected for any positive representation $\rho : \Cl_k \to \End(\R^p)$ (which is equivalent to the $(p / s_k)$-fold direct sum of the positive $S_k$).
We will work under these assumptions from now on. 

Next we note that we may add on both sides of \eqref{0+1H}  a bundle $F\to X$, pulled back via $\pi:\hat V\to X$ (which further increases $p$, but not the winding number $\eta$, so that the assumptions of Theorem \ref{mindef} are preserved), that is we add $\pi^*F = (F,\id,F)$. 
Then the original  statement is obtained by embedding $F$ into a trivial bundle over $X$ and adding a complement  $F^\perp$ on both sides of \eqref{0+1H}.

We use this freedom to put $E_+$ into a special form.
Since $E_+$ embeds into a trivial bundle $\underline{\R}^N \to X$ and each bundle over $X$ of rank larger than $\dim (X)$ splits off a trivial line bundle (its Euler class vanishes), 
$E_+$ embeds into any sufficiently large vector bundle over $X$, 
in particular into  ${\Lambda^+}$ for some   $\Cl(V)$-module bundle $\Lambda\to X$ (e.g.\ $\Lambda = \R^q\otimes \Cl(V) = \Cl(V)^q$).
Then $E_+$ is a direct summand of $\Lambda^+$. 

After adding $(F, \id, F)$ to $(E_+, \sigma, E_-)$ with  $F = (E_+)^{\perp} \subset \Lambda^+$ we may hence assume that  
$$
	E_+ = \Lambda^+\,.
$$ 
Note that this bundle is oriented. 
After these preparations Theorem \ref{main}  is proven by induction over a cell decomposition of $X$. 
The decomposition \eqref{0+1H} results from the following fact.

\begin{prop} \label{important}  
There is an orthogonal decomposition of $\Lambda^{\pm}$  into $\Cl(V)$-invariant subbundles
\beq \label{E0+E1}
	\Lambda^\pm  = \Lambda^\pm_0 \oplus \Lambda^\pm_1
\eeq
with the following property: 
the triple $\mathscr{E} = (E_+, \sigma , E_-) $  is isomorphic to the  triple 
\beq \label{hopftriple}
    \big(\Lambda_0^+ \oplus \Lambda_1^+ ,\id\oplus\mu, \Lambda_0^+ \oplus \Lambda_1^- \big)
	= (\Lambda_0^+ , \id , \Lambda_0^+) \oplus (\Lambda_1^+,\mu,\Lambda_1^-) 
\eeq
where $\mu : \SS V \x \Lambda_1^+ \to \Lambda_1^-$ is the $\Cl(V)$-module multiplication on  $\Lambda_1$.

Furthermore this isomorphism can be chosen as the identity on the first bundle  $E_+ = \Lambda^+$. 
Hence, by \eqref{isomorph}, it is given by an isomorphism  of vector bundles over the total space $\D V$, 
$$
	f = f_-: E_- \buildrel \cong\over\longrightarrow \Lambda_0^+  \oplus \Lambda_1^- 
$$ 
such that over $\SS V$ we have
\[
    f\o\sigma = \omega := \id\oplus\mu.
\]
\end{prop}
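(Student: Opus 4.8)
The plan is to deform the clutching section $\sigma$ fiberwise into affine Hopf shape by an obstruction-theoretic argument that feeds on the connectivity statement of Theorem~\ref{mindef}, and then to read the decomposition \eqref{E0+E1} and the isomorphism $f$ off the resulting clutching data. I keep the reductions already made: $X$ is connected, $\rk V=n=4m$, $E_+=\Lambda^+$, the stable winding number $\eta=\eta^s(\sigma_x)$ is constant on $X$ with $s_k\.d\le\eta\le p/4$ for $d=\dim X$ and $k=n-1$, and $p\ge p_0(d)$ is divisible by $s_k$, so that a positive $\Cl_k$-representation $\rho_+$ on $\R^p$ as in Remark~\ref{Lplus} exists. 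Over a contractible trivializing chart $U\subset X$, pick an oriented orthonormal frame of $V|_U$ and a compatible $\Cl_n$-equivariant trivialization $\Lambda|_U\cong U\x L$ as in \eqref{triv}, and trivialize $E_-|_U$ by $\sigma(e_1)$; then $\sigma|_U$ becomes a map $U\to\Map_*(\SS^{n-1},\SO_p)_\eta$ (recall $\SS^{n-1}$ is connected as $n\ge4$, so the local clutching functions really take values in $\SO_p$) and $\Hopf_{\rho_+}(\SS^{n-1},\SO_p)_\eta$ sits inside it. The first point is that the condition ``$\sigma_x$ is a positive affine Hopf map of winding number $\eta$'' is \emph{intrinsic}: a change of chart acts on these data through an element of $\SO(n)$ on $V$ — which fixes the volume element, since $V$ is oriented, hence preserves $L=L^+\oplus L^-$ — together with a $\Cl_n$-linear automorphism of $L$, and such a transformation carries affine Hopf maps to affine Hopf maps and preserves positivity and the winding number. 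Thus the fiber bundles over $X$ with fibers $\Map_*(\SS^{n-1},\SO_p)_\eta$, respectively $\Hopf_{\rho_+}(\SS^{n-1},\SO_p)_\eta$, associated to the oriented frame bundle of $V$ and the $\Cl_n$-module $L$, are well defined, the latter a subbundle of the former, and $\sigma$ is a global section of the former.

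The core step is to deform $\sigma$ into a section of the affine Hopf subbundle. By Theorem~\ref{mindef} the fiber inclusion $\Hopf_{\rho_+}(\SS^{n-1},\SO_p)_\eta\subset\Map_*(\SS^{n-1},\SO_p)_\eta$ is $d$-connected, so the pair of fibers has vanishing relative homotopy through dimension $d=\dim X$. A standard cell-by-cell lifting argument applies: working over a CW decomposition of $X$ whose closed cells lie in trivializing charts, one deforms $\sigma$ on each cell rel its boundary into the affine Hopf subspace — the boundary already lying there by the inductive hypothesis, thanks to the intrinsic character above — and patches the cell-wise homotopies by the homotopy extension property of the cofibrations $X^{(j-1)}\hra X^{(j)}$. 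This yields a homotopy of sections from $\sigma$ to a section $\sigma'$ with $\sigma'_x$ a positive affine Hopf map of winding number $\eta$ for every $x\in X$ (the winding number is unchanged, being locally constant on the mapping space). By Remark~\ref{homopm}, applied with $\phi_+=\id_{E_+}$, this homotopy of clutching sections does not alter the isomorphism type of $\mathscr E$, so $\mathscr E\cong(\Lambda^+,\sigma',E_-)$ by an isomorphism equal to the identity on $E_+=\Lambda^+$.

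It remains to extract \eqref{E0+E1} and $f$. For each $x$ the subspace
\[
   (\Lambda^+_0)_x:=\{\xi\in(E_+)_x:\ \sigma'_x(v)\xi\ \text{ is independent of }\ v\in\SS V_x\}
\]
is $\Cl_n^+$-invariant of fixed dimension (determined by $\eta$) and varies continuously with $x$; transporting it through the trivializations gives a $\Cl^+(V)$-invariant orthogonal splitting $\Lambda^+=\Lambda^+_0\oplus\Lambda^+_1$, which extends to a $\Cl(V)$-invariant splitting $\Lambda=\Lambda_0\oplus\Lambda_1$ by $\Lambda_i=\Lambda_i^+\oplus\mu(v)\Lambda_i^+$ — the summand $\mu(v)\Lambda_i^+\subset\Lambda^-$ being independent of $v\in\SS V_x$ since $\mu(w)^{-1}\mu(v)\in\mu(\Cl^+(V))$ preserves $\Lambda_i^+$. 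This is \eqref{E0+E1}. With respect to this splitting $\sigma'$ is, fiberwise and hence globally, precisely the affine Hopf clutching map $\id\oplus\mu$ of $\Lambda=\Lambda_0\oplus\Lambda_1$: $\Lambda^+_0$ is the locus where $\sigma'$ is constant in the $\SS V$-direction, and on $\Lambda^+_1$ Proposition~\ref{clifrep} identifies $\sigma'$ with the restricted Hopf map. Hence $(\Lambda^+,\sigma',E_-)$ is the affine Hopf triple \eqref{hopftriple} of $\Lambda=\Lambda_0\oplus\Lambda_1$; its target bundle is thereby forced to be $\Lambda_0^+\oplus\Lambda_1^-$, and the triple isomorphism is the identity on $E_+=\Lambda^+$ and, over $\SS V$, is $\omega\circ(\sigma')^{-1}$, which extends to the asserted bundle isomorphism $f=f_-\colon E_-\to\Lambda_0^+\oplus\Lambda_1^-$ over $\D V$ with $f\circ\sigma=\omega$ on $\SS V$ (Remark~\ref{homopm} again, with $\phi_+=\id$).

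The step I expect to be the main obstacle is the intrinsic-ness verification packaged into the first and third paragraphs: establishing rigorously that ``$\sigma_x$ is a positive affine Hopf map of winding number $\eta$'' does not depend on the chosen local trivialization, so that the cell-wise deformations are mutually compatible and the fiberwise decompositions assemble into subbundles — this is exactly where orientability of $V$ and the positivity conventions of Remark~\ref{Lplus} enter. The connectivity estimate Theorem~\ref{mindef}, together with the index computations of Section~\ref{index} behind it, is used as a black box, and the preliminary normalization of $\eta$ by adding copies of $\mathscr C$ and trivial bundles was made precisely so that its hypotheses $s_k\.d\le\eta\le p/4$ are available here.
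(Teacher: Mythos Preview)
Your proof is correct and follows the same underlying strategy as the paper: a cell-by-cell deformation of the normalized clutching map into affine Hopf form, driven by the $d$-connectivity of the inclusion $\Hopf_{\rho_+}(\SS^{n-1},\SO_p)_\eta\subset\Map_*(\SS^{n-1},\SO_p)_\eta$ from Theorem~\ref{mindef}. The difference is organizational rather than mathematical.

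You package the induction as obstruction theory for sections of a fiber-bundle pair over $X$, which requires the intrinsic-ness claim you rightly flag as the delicate point: one must verify that the transition functions (acting by $(g,\psi)$ with $g\in\SO(n)$ changing the frame of $V$ and $\psi$ a $\Cl_n$-intertwiner of $L$) send $\Map_*(\SS^{n-1},\SO_p)$ to itself via $\phi\mapsto\psi^{-1}\phi(ge_1)^{-1}\phi(g\,\cdot\,)\psi$ and preserve the affine Hopf subspace --- they do, with the splitting $L_0^+\oplus L_1^+$ replaced by $\psi^{-1}L_0^+\oplus\psi^{-1}L_1^+$, using $\rho(gv)=\psi\rho(v)\psi^{-1}$. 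The paper instead keeps everything in explicit coordinates over each cell $D$, builds the isomorphism $f$ directly by induction, and handles the compatibility across $\partial D$ via an auxiliary extension $\hat F:D\times\D^n\to\SO(L^+)$ of the coordinate expression $\hat f$ of $f|_{\partial D}$ (the ``Assertion'' in the paper's proof); only then is Theorem~\ref{mindef} applied to $\tau=\hat F\hat\sigma$. Your route is conceptually cleaner once the transition-function check is made; the paper's route avoids setting up the mapping-space fiber bundle at the price of heavier bookkeeping with $\eta,\vartheta,\hat f,\hat F,\tau$.
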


\begin{proof} 
In the induction step let $X = X' \cup D$ be obtained by attaching a cell $D$ to $X'$  and assume that the assertion holds for the restriction to $X'$ of the bundle 
$$
	\mathscr{E} = (E_+,\sigma,E_-)
$$
We denote by $V' \to X'$ the restriction of $V$. 
Hence we have the decomposition \eqref{E0+E1} over $X'$ and an isomorphism 
\beq \label{isof}
     f : E_-|_{\D V'} \buildrel \cong\over\to 
	(\Lambda_0^+  \oplus  \Lambda_1^-)|_{\D V'} 
\eeq
such that over $\SS V'$,
\beq \label{fsigma}  
    f \circ \sigma = \omega = \id\oplus\mu \in 
	\SO(\Lambda_0^+\oplus\Lambda_1^+,\Lambda_0^+\oplus\Lambda_1^-)  
\eeq
where $\mu(v)\in\SO(\Lambda_1^+,\Lambda_1^-)$ is the $\Cl(V)$-module multiplication ($v\in\SS V'$).

\medskip
We need to extend $f$ to a similar isomorphism over $X = X' \cup D$.
In particular we need to extend the bundle decomposition  $\Lambda = \Lambda_0 \oplus \Lambda_1$ from  $X'$ to $X$. 
This will ultimately be achieved by applying Theorem \ref{mindef} to the restriction of $\mathscr{E} \to\hat V$  to $\hat V|_{D}\cong D\x\SS^n$. 
In fact, trivializing ${E_\pm}$ over $D \subset X$, the clutching map $\sigma$ of $\mathscr{E}$ will become a map $\hat\sigma : D \x \SS^{n-1} \to \SO(L^+)$ where $L^+ = \R^p$ is the standard fibre of ${E_+}$. 
However, in order to apply Theorem \ref{mindef}, this map $\hat\sigma : D\to \Map(\SS^{n-1},\SO(L^+))$ needs
\begin{itemize} 
\item[(i)] taking values in $\Map_*(\SS^{n-1},\SO(L^+)) = \{\phi: \SS^{n-1}\to\SO(L^+):\phi(e_1) = I\}$ (see Def.\ \ref{mapstar}), 
\item[(ii)] $\hat\sigma(\d D) \subset \Hopf_{\rho_+}(\SS^{n-1},\SO(L^+))_\eta$ with $\rho_+ : \Cl_k\to \End(L_+)$ as in \eqref{rho+}. 
\end{itemize}
Requirement (ii) will be met (using the clutching maps $\sigma,\omega$ as trivializations) by transforming $f$ to $\hat f$, see \eqref{hatf}, which will change \eqref{fsigma} to \eqref{hatfsigma}. 
From \eqref{hatfsigma} we will see using Remark \ref{Lplus} that the clutching map $\hat f\hat\sigma:\d D \to\Map_*(\SS^{n-1},\SO(L^+))$ takes values in $\Hopf_{\rho_+}(\SS^{n-1},\SO(L^+))$.
In order to meet the requirement (i) we need an extension $\hat F$ of $\hat f$ from $\d D$ to all of $D$ such that $\hat F(x,e_1) = \id_{L_+}$ for all $x\in D$, see the ``Assertion'' below. 
Then we can apply our deformation theorem \ref{mindef} to the clutching map $\tau = \hat F\hat\sigma$, thus obtaining a new clutching map $\tau_1$ with values in $\Hopf_\rho(\SS^{n-1},\SO(L^+))_\eta$. It fits together with the given clutching map along $X'$ since there was no change along $\d D = X'\cap D$.

\medskip
Now we explain these steps in detail.
Choose a trivialization of $V|_D$, that is an oriented orthonormal frame over $D$.  
It induces trivializations of $\D V$ and $\Cl(V)$. 
Also we choose a compatible trivialization of $\Lambda$, compare Example \ref{Lambda}:
\beq \label{trivial}
     \D V|_D \cong D\x\D^n, \qquad
   \Cl(V)|_D  \cong (\underline{\Cl}_n)_D   , \quad 
   \Lambda|_D \cong (\underline{L})_D
\eeq
for some $\Cl_n$-module $L=L^+\oplus L^-$ (recall $n = 4m$) such that for any $x\in D$ the  $\Cl(V_x)$-module multiplication $V_x \x \Lambda_x \to \Lambda_x$ is transferred to the $\Cl_n$-module multiplication $\R^n\x L \to L$.

Identifying $\SS V|_D$ with $D\x\SS^{n-1}$, we choose $e_1\in\SS^n$ as a base point  and put 
\beq 
     \sigma_1(x) := \sigma(x,e_1) , \quad \omega_1(x) := \omega(x,e_1) \, . 
 \eeq
We use these bundle isomorphisms as trivializations $\eta$ and $\vartheta$ of the bundles $E_-|_{D}$ and $(\Lambda_0^+\oplus \Lambda_1^-)|_{\d D}$
(which below will be pulled back to $\D V|_D$ and $\D V|_{\d D}$, respectively): 
\beq \label{eta}
\begin{matrix}
  \eta &:& E_-
	&\buildrel\sigma_1^{-1}\over\longrightarrow& 
	E_+ &=& \Lambda^+ \cong \underline{L}^+ &\textrm{over } D,&&  \cr
   \vartheta &:&  \Lambda_0^+ \oplus \Lambda_1^- 
	&\buildrel \omega_1^{-1}\over\longrightarrow&  
	\Lambda_0^+ \oplus \Lambda_1^+ &=& \Lambda^+ \cong \underline{L}^+&\textrm{over } \d D.&& 
\end{matrix}
\eeq
Thus we can express the isomorphism $f$ in \eqref{isof}, restricted to $\D V|_{\d D}$, as a map 
\beq \label{hatf}
\hat f = \vartheta\o f\o\eta^{-1}: \d D \x \D^{n} \to \SO(L^+).
\eeq
At the base point $e_1\in\SS^{n-1}$ we obtain for all $x\in\d D$:
\bea \label{hatfe1}
	\hat f(x,e_1) &=& \vartheta(f(\sigma_1(x))) \\
	&\buildrel \eqref{fsigma}\over =& \vartheta(\omega_1(x)) \cr
	&\buildrel \eqref{eta}\over =& 
	\omega_1(x)^{-1}\o \omega_1(x) \cr
	&=& \id_{L^+}. \nonumber
\eea
Further, \eqref{fsigma} is transformed over $\SS V|_{\d D} = \d D\x\SS^{n-1}$ into
\beq \label{hatfsigma}
	\hat f  \hat\sigma = \hat\omega : \d D \x \SS^{n-1}\to \SO(L^+)
\eeq
where $\hat\sigma = \eta\o\sigma : D\x\SS^{n-1}\to \SO(L^+)$ and $\hat\omega = \vartheta\o\omega : \d D\x \SS^{n-1}\to SO(L^+)$ are evaluated as follows:
\beq \label{hatsigma}
\begin{matrix}
	\hat\sigma(x,v) &=& \sigma(x,e_1)^{-1}\sigma(x,v) &\textrm{for all}& x\in D,&&\cr
	\hat\omega(x,v) &=& \omega(x,e_1)^{-1}\omega(x,v) &\textrm{for all}& x\in \d D.&&
\end{matrix}
\eeq
	
Now we will extend $\hat f$ from $\d D\x\D^{n}$ to $D\x\D^{n}$.
The extended map 
\[
    \hat F: D\x\D^{n}\to \SO(L^+)
\]
restricted  to $D\x\SS^{n-1}$, will be used to define another clutching map 
\beq \label{tau}
	\tau := \hat F\hat\sigma : D\x\SS^{n-1}\to \SO(L^+) \
	\textrm{ with $\tau =\hat f\hat\sigma=\hat\omega$ on $\d D\x\SS^{n-1}$.}
\eeq

\bigskip 

\begin{ass} 
The map $\hat f : \d D\x\D^n \to \SO(L^+)$ in \rm{\eqref{hatf}} extends to a map 
\[
     \hat F :D\x\D^n \to \SO(L^+)
\]
 such that $\hat F(x,e_1) = \id_{L^+}$ for all $x\in D$. 
\end{ass}

\begin{proof}[Proof of assertion] 
By \eqref{hatfe1}, $\hat f$ is equal to the identity on the fibre over  $\partial D \times \{e_1\}$, where as usual $e_1 \in \SS^{n-1} \subset \D^{n}$. 
Notice that $\{e_1\} \subset \D^{n}$  is a strong deformation retract. 
Let  $r_t :  \D^{n} \to \D^{n}$, $t\in[0,1]$,  be a deformation retraction with $r_1 = \id_{\D^n}$ and $r_0 \equiv e_1$: 

\ms\hskip 5cm
\includegraphics{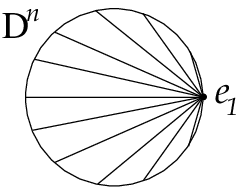}

\ms
This gives a homotopy  $\hat f_t: \id_{L^+}\simeq \hat f$,
$$
    \hat f_t = \hat f \o (\id_{\d D},r_t) : \partial D \times \D^{n}  \to  \SO(L^+)	
$$
with $\hat f_t(\tilde x,e_1) = \id_{L^+}$ for all $\tilde x\in \d D$ and $t\in[0,1]$.
Considering $D$ as the cone over $\d D$, this can be viewed  as a map
\bea
   \hat F :   D \times \D^{n} &\to& \SO(L^+) \,,\cr
		\hat F(x,v) &=& \hat f_{|x|}(\tilde x,v) = \hat f(\tilde x,r_{|x|}(v)) 
\nonumber
\eea
with $\tilde x = x/|x|$ when $x\neq 0$. Moreover, $F(0,v) = \id_{L^+}$ for all $v\in\D^n$
since $r_0 \equiv e_1$ and $\hat f(.,e_1)\equiv \id_{L^+}$.
This map restricts to $\hat f$ over $\partial D \times \D^{n}$ and it is 
constant $=\id_{L^+}$ along the fibers over 
$D \times [0,1]e_1 \subset D \times \D^{n}$, thus finishing the proof of the assertion. 
\end{proof}

The new clutching map $\tau =\hat F\hat\sigma : D \to \Map(\SS^{n-1}, \SO(L^+))$ satisfies
$$
	\tau(\d D) \subset \Hopf_{\rho_+}(\SS^{n-1},\SO(L^+))_\eta
$$
by \eqref{tau}, \eqref{hatsigma}, \eqref{fsigma} (cf.\ Definition \ref{hopfrho} and \eqref{rho+}),
where $\eta$ is the stable winding number of the original clutching map $\sigma$ in the sense of \eqref{sigmaU}.

Since 
	$p \geq p_0(d)\ \textrm { where }\ d = \dim X \textrm{ and } s_\ell\.d \leq \eta \leq p/4$, 
we can apply Theorem   \ref{mindef} to obtain a deformation $(\tau_t)_{t\in[0,1]}$ of $\tau=\tau_0$  such that $\tau_t = \tau$ on $\d D$ and
\[
     \tau_1(D)\in\Hopf_{\rho_+}(\SS^{n-1},\SO(L^+))_\eta.
 \]
Thus the trivial bundle $\underline{L}^+$ decomposes over $D$ into orthogonal $\Cl_{n-1}$-invariant subbundles, 
\beq \label{L+}
	\underline{L}^+|_D = L_0^+\oplus L_1^+ \ 
	\textrm{ and }\ \tau_1   = \id \oplus \hat\mu\,,	
\eeq 
where $\hat\mu \buildrel \eqref{hatsigma}\over = \mu(e_1)^{-1}\mu = \rho_+|_{\SS^{n-1}}$ is the Hopf map induced by the $\Cl_{n-1}$-module structure on $L^+_1$ (compare Remark \ref{Lplus}). 

We have to connect these clutching data over $D$ to the given ones over $X'$. 
In \eqref{trivial}
we have chosen a trivialization $\Lambda^+|_D \buildrel \cong\over\to (\underline{L}^+)_D$  which over $\d D$ transforms the decomposition 
$\Lambda^+ = \Lambda_0^+\oplus \Lambda_1^+$ into a bundle decomposition 
$(\underline{L}^+)_{\d D} = (L_0^+\oplus L_1^+)|_{\d D}$\,. 
Now we extend the decomposition \eqref{E0+E1} 
of $\Lambda^+|_{\d D}$ to all of $D$ by 
applying the inverse of this trivialization to \eqref{L+} obtaining a decomposition $\Lambda^+ = \Lambda^+_0 \oplus \Lambda_1^+$ on all of $X$. 
Similarly, we obtain subbundles $\Lambda^-_{0} = \mu(V)\Lambda^+_{0}$ and $\Lambda_1^- = \mu(V)\Lambda_1^+$ decomposing $\Lambda^-$.

Now we can define a bundle isomorphism $$F : E_- \to \Lambda_0^+\oplus\Lambda_1^-$$ over $\SS V$, with $F = f$ over $\SS V|_{X'}$ and 
\beq \label{F} 
F : E_- \buildrel \sigma^{-1} \over\longrightarrow E_+ = \Lambda^+ 
\buildrel \id\oplus\mu\over\longrightarrow \Lambda_0^+\oplus\Lambda_1^-\ \textrm{ over } \SS V|_{D}
\eeq
This is compatible over $\d D = X'\cap D$ because  over $\SS V|_{\d D}$ we have $f =\omega \o \sigma^{-1}$ and $\id\oplus\mu = \omega$. 
It defines an isomorphism 
$$
\phi = (\id_{E_+},F) : (E_+,\sigma,E_-)\to (\Lambda^+,\omega,\Lambda_0^+\oplus\Lambda_1^-)\,.
$$ 
In fact, over $X'$ this is true by assumption,  and over $D$ we use \eqref{F} for the commutativity of the diagram 
$$
\begin{matrix}
   \xymatrix{  E_+   \ar@{>}[d]_{\sigma}   \ar@{>}[r]^{=}  & 
	\Lambda^+   \ar@{>}[d]^{\id\oplus\mu}   \\
  	E_-    \ar@{>}[r]^{F}  &   \Lambda_0^+\oplus\Lambda_1^-  } \end{matrix}
$$
Hence the isomorphism $f$ from Equation \eqref{isof} extends to an isomorphism $\phi$ over $X$  between the given bundle $\mathcal E$ and a bundle of the form \eqref{hopftriple}.
This finishes the proof of Proposition \ref{important} and the existence part of  Theorem \ref{main}.
\end{proof} 

For the uniqueness statement of Theorem \ref{main} we need the following lemma. 

\begin{lem} \label{isomorphism} 
Let $\mathscr{E} = (E_+, \sigma, E_-)$ and $\tilde{\mathscr{E}} = ( \tilde E_+, \tilde \sigma, \tilde E_-)$ be triples with 
$$E_+ = \tilde E_+ =:F$$ 
and 
$$
   \phi = (\phi_+, \phi_-) : \mathscr{E} \to  \tilde{\mathscr{E}} 
$$
an orthogonal bundle isomorphism 
(see Remark \ref{homopm}). Let $\mathscr F = (F,\id,F) = \pi^*(F)$.
Then there exists an orthogonal isomorphism 
\[
   \tilde\phi :  \mathscr{E} \oplus \mathscr F \to   \tilde{\mathscr{E}}\oplus \mathscr F
\]
such that $\tilde\phi_+ = \id$.
\end{lem}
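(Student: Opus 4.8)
The plan is to exploit the standard ``Eilenberg swindle''-type trick for clutching data: an isomorphism $\phi=(\phi_+,\phi_-)$ between triples that is \emph{not} the identity in the first coordinate can be rectified to one that is, at the cost of adding a trivial summand $\mathscr F=\pi^*F=(F,\mathrm{id},F)$. The point is that on the trivial summand we have complete freedom to ``undo'' $\phi_+$, because the clutching map there is the identity and so conjugating it by a loop of bundle automorphisms changes nothing up to isomorphism; this is exactly the mechanism already used in Remark \ref{homopm}.

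Concretely, here is how I would carry it out. First, form $\mathscr E\oplus\mathscr F=(F\oplus F,\sigma\oplus\mathrm{id},E_-\oplus F)$ and $\tilde{\mathscr E}\oplus\mathscr F=(F\oplus F,\tilde\sigma\oplus\mathrm{id},\tilde E_-\oplus F)$. On the first bundle $F\oplus F$ over $\D V$ consider the automorphism given in block form by the ``rotation trick'': using the homotopy
\[
   R_s=\begin{pmatrix}\cos(\tfrac{\pi}{2}s)\,\mathrm{id}_F & -\sin(\tfrac{\pi}{2}s)\,\mathrm{id}_F\\[2pt] \sin(\tfrac{\pi}{2}s)\,\phi_+^{-1} & \cos(\tfrac{\pi}{2}s)\,\phi_+ \end{pmatrix},\qquad 0\le s\le 1,
\]
one has $R_0=\mathrm{diag}(\mathrm{id}_F,\phi_+)$ and $R_1=\begin{pmatrix}0&-\mathrm{id}_F\\ \phi_+^{-1}&0\end{pmatrix}$, so that $R_1^{-1}\circ\mathrm{diag}(\phi_+,\mathrm{id}_F)=R_0$; after a further obvious block swap this exhibits $\mathrm{diag}(\phi_+,\mathrm{id}_F)$ as homotopic (through bundle isomorphisms over $\D V$) to the identity $\mathrm{diag}(\mathrm{id}_F,\mathrm{id}_F)$. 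Since $R_s$ need not be orthogonal one first applies to each $R_s$ the fiberwise polar/Gram--Schmidt retraction of $\mathrm{Gl}(F\oplus F)$ onto $\Or(F\oplus F)$, which is continuous and fixes orthogonal maps, producing an orthogonal homotopy $\rho_s$ with $\rho_0=\mathrm{diag}(\mathrm{id}_F,\phi_+)$ and $\rho_1=\mathrm{id}$.

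Now set $\tilde\phi_+=\mathrm{id}_{F\oplus F}$ and $\tilde\phi_-=(\phi_-\oplus\mathrm{id}_F)\circ(\sigma\oplus\mathrm{id})\circ\rho_1^{-1}\circ(\sigma\oplus\mathrm{id})^{-1}$ over $\SS V$ — equivalently, apply the construction of Remark \ref{homopm} with the homotopy $\phi_+^t:=\rho_t^{-1}\circ(\phi_+\oplus\mathrm{id}_F)$ running from $\phi_+\oplus\mathrm{id}_F$ (at $t=0$) to $\mathrm{id}$ (at $t=1$), so that the clutching map $\tilde\sigma\oplus\mathrm{id}$ of $\tilde{\mathscr E}\oplus\mathscr F$ is replaced by the homotopic clutching map $(\tilde\sigma\oplus\mathrm{id})\circ(\phi_+\oplus\mathrm{id})\circ(\phi_+^1)^{-1}=(\tilde\sigma\oplus\mathrm{id})\circ(\phi_+\oplus\mathrm{id})$, which by \eqref{isomorph} for $\phi$ equals $(\phi_-\oplus\mathrm{id})\circ(\sigma\oplus\mathrm{id})$; the pair $(\mathrm{id},\,(\phi_-\oplus\mathrm{id}))$ is then manifestly an orthogonal isomorphism $\mathscr E\oplus\mathscr F\to\tilde{\mathscr E}\oplus\mathscr F$ with identity in the first coordinate. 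Since the clutching map of $\tilde{\mathscr E}\oplus\mathscr F$ is only well defined up to fiberwise homotopy, replacing it by a homotopic one does not change the isomorphism type, so $\tilde\phi$ is a genuine isomorphism onto $\tilde{\mathscr E}\oplus\mathscr F$.

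The only delicate point is the orthogonalization step: one must make sure that the retraction of $\mathrm{Gl}$ onto $\Or$ is applied \emph{fiberwise and continuously} in both the base point of $X$ and the homotopy parameter $s$, and that it is the identity on $\rho_0$ and $\rho_1$ (which are already orthogonal, since $\phi_+$ is an orthogonal bundle map and $R_0,R_1$ are built from it by isometric block operations). This is routine — the Gram--Schmidt/polar decomposition gives a smooth equivariant deformation retraction of $\mathrm{Gl}_N$ onto $\Or_N$ fixing $\Or_N$ pointwise — but it is the one spot where a careless argument could fail, so I would state it carefully. Everything else is bookkeeping with the diagram \eqref{isomorph} and Remark \ref{homopm}.
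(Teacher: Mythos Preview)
Your argument has a genuine gap at the central step: the claim that $\diag(\phi_+,\id_F)$ is homotopic to $\id_{F\oplus F}$ through bundle isomorphisms over $\D V$ is \emph{false} in general. Since $\D V$ retracts to $X$, the map $\phi_+$ is essentially a section of $\Or(F)\to X$, and stabilising by $\id_F$ does not change its path component; already for $X$ a point and $\det\phi_+=-1$ the two maps lie in different components of $\Or_{2p}$. Your proposed path $R_s$ does not rescue this: it is not even a path through isomorphisms (for instance if $\phi_+^2=-\id$ then $R_{1/2}$ annihilates the diagonal $\{(x,x)\}\subset F\oplus F$), and the identity $R_1^{-1}\circ\diag(\phi_+,\id_F)=R_0$ you assert does not hold -- a direct computation gives $R_1^{-1}\diag(\phi_+,\id_F)=\bigl(\begin{smallmatrix}0&\phi_+\\-\phi_+&0\end{smallmatrix}\bigr)$.

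What \emph{is} true, and what the paper exploits, is that $\diag(\phi_+,\phi_+^{-1})$ (or any block matrix built from both $\phi_+$ and $\phi_+^{-1}$) is homotopic to $\id$. The paper arranges this by taking the off--diagonal $A=\bigl(\begin{smallmatrix}0&-\phi_+^{-1}\\ \phi_+&0\end{smallmatrix}\bigr)$ as the $+$--component of an isomorphism $\mathscr E\oplus\mathscr F\to\mathscr F\oplus\tilde{\mathscr E}$; since $A^2=-\id$ and $A^T=-A$, the explicit orthogonal homotopy $A_t=(\cos t)\id+(\sin t)A$ connects $A$ to $\id$, and Remark~\ref{homopm} applies directly (no Gram--Schmidt needed). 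A second application swaps the summands back. Your approach can be repaired along the same lines: on the added summand $\mathscr F=(F,\id,F)$ use the automorphism $(\phi_+^{-1},\phi_+^{-1})$ rather than $(\id,\id)$, so that the $+$--part of the combined map becomes $\diag(\phi_+,\phi_+^{-1})$, which is an orthogonal complex structure after a block swap and hence genuinely homotopic to $\id$.
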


\begin{proof} 
We consider the isomorphism $\hat\phi' = (A,B) : \mathscr E \oplus \mathscr F
\to \mathscr F \oplus \tilde{\mathscr E}$ given by the following commutative diagram:
$$
\begin{matrix}
   \xymatrix{  E_+ \oplus F  \ar@{>}[d]_{\sigma\oplus\id}   \ar@{>}[r]^{A}  & 
	F\oplus \tilde E_+  \ar@{>}[d]^{\id\oplus\tilde\sigma}   \\
  	E_-\oplus F    \ar@{>}[r]^{B}  &   F\oplus\tilde E_-} \end{matrix}
$$
with $A = \bpm & -\phi_+^{-1} \cr \phi_+ \epm$ and $B = \bpm & -\phi_+^{-1} \cr \phi_- \epm$\,.
Note that $A$ is a complex structure on $F\oplus F$, that is $A^2 = -\id$. This can be deformed
into $\id$ using the homotopy $A_t = (\cos t)\,\id + (\sin t)A$ with $A_0 = \id$ and $A_{\pi/2} = A$.
By Remark \ref{homopm}, $\hat\phi'$ can be deformed onto an isomorphism $\tilde\phi' : \mathscr{E} \oplus\mathscr{F} \to \mathscr{F}\oplus\tilde{\mathscr{E}}$ with $\tilde\phi_+' = \id$.

However, we have yet to interchange the two summands of $\mathscr F \oplus \tilde{\mathscr E}$. 
Hence we apply the same construction once more, replacing $\phi : \mathscr E \to \tilde{\mathscr E}$ by $\phi''=\id : \tilde{\mathscr E} \to \tilde{\mathscr E}$.
This gives us a bundle isomorphism 
$\hat\phi'' = (A'',B'') : \tilde{\mathscr E}\oplus\mathscr F \to \mathscr F\oplus\tilde{\mathscr E}$
such that $A''$ can be deformed into $\id$, 
and by Remark \ref{homopm} again $\hat\phi''$ can be deformed into an isomorphism $\tilde\phi''$ of these bundles with $(\tilde\phi'')_+ = \id$. Then the composition 
$\tilde\phi:=(\tilde\phi'')^{-1}\tilde\phi' :  \mathscr{E} \oplus \mathscr F \to   \tilde{\mathscr{E}}\oplus \mathscr F$ satisfies $\tilde\phi_+ = \id$.
\end{proof}

Now, for the uniqueness part of Theorem \ref{main},  assume that ${E} , {\tilde E} \to X$ are Euclidean  vector bundles and ${\mathscr{L}}, {\tilde{\mathscr{L}}} \to \hat V$ the Hopf bundles corresponding to $\Cl(V)$-module bundles ${\Lambda}, {\tilde\Lambda}\to X$. 
Furthermore, for $ \mathscr{E} := {E} \oplus {\mathscr{ L}}$ 
and $ \tilde{\mathscr{E}}:={\tilde E} \oplus {\tilde{\mathscr{L}}}$ 
(after pulling back ${E}$ and ${\tilde E}$ to $\hat V$) we assume  
\begin{equation} \label{isomorphic}
    \mathscr{E}  \cong \tilde{\mathscr{E}} \, . 
\end{equation} 

Put $E_\pm = {E} \oplus {\Lambda}^\pm$ and 
$\tilde E_\pm = {\tilde E} \oplus {\tilde \Lambda}^\pm$.
We claim that after adding certain bundles $F,\tilde F\to X$ to $E_+,\tilde E_+$ with $F\cong_s\tilde F$  we may in addition assume that 
\beq \label{E+}
	E_+ = {\hat\Lambda}^+ = \tilde E_+
\eeq
for some $\Cl(V)$-module bundle ${\hat\Lambda} \to X$.
For proving this claim choose a $\Cl(V)$-module bundle $\Lambda_0 \to X$ such that ${E},{\tilde E}$ embed into  $\Lambda_0^+$ and put
${\Lambda = \Lambda\oplus \tilde \Lambda} \oplus \Lambda_0$. 
Put $F = {\tilde\Lambda^+\oplus(\Lambda_0^+\ominus E})$ and $\tilde F = {\Lambda^+ \oplus (\Lambda_0^+\ominus \tilde E})$. 
Then
\beq \label{FtildeF}
	F \oplus {\Lambda}^+ \oplus {E} = {\hat\Lambda}^+ = 
	\tilde F \oplus {\tilde\Lambda}^+ \oplus {\tilde E},
\eeq
but ${\Lambda^+}\oplus {E} = E_+$ and ${\tilde\Lambda}^+ \oplus {\tilde E} = \tilde E_+$ are isomorphic by \eqref{isomorphic} 
and thus $F\cong_s \tilde F$ by \eqref{FtildeF}, such that \eqref{E+} can be assumed.

\medskip
After this preparation we may hence assume that 
\bea
\mathscr{E}  &=& ({\hat\Lambda}^+ = {E\oplus \Lambda^+},\omega, {E\oplus \Lambda}^-)\,, \cr
\tilde{\mathscr{E}} &=& ({\hat\Lambda^+=\tilde E\oplus\tilde\Lambda}^+, \tilde \omega ,{\tilde E\oplus\tilde\Lambda}^-) \,,\nonumber
\eea
and 
\[
   \phi = (\phi_+, \phi_-) : \mathscr{E} \buildrel \cong\over\to \tilde{\mathscr{E}} 
\]
is a vector bundle isomorphism, which we may assume to be orthogonal.

Due to \eqref{E+}  Lemma \ref{isomorphism} implies that we can furthermore assume $\phi_+ = \id$, at least after adding some trivial bundle $\underline{\R}^q$ to $\mathscr{E}$ and $\tilde{\mathscr{E}}$ (chosen such that
the bundle $F$ in Lemma \ref{isomorphism} embeds as a subbundle of $\underline{\R}^q$). 
In particular this implies 
\[
   \tilde \omega = \phi_- \circ \omega  : \SS V \to \SO(\Lambda^+ , {\tilde E\oplus\tilde\Lambda}^- ) \, . 
\]
We have to show that 
\[
      {E} \cong_s {\tilde E} \, , \quad  {\Lambda} \cong_s {\tilde \Lambda} \, . 
 \]
 But the given data induce a triple $\hat{\mathscr{E}} =  ( {\Lambda^+}, \tau, \hat E_-)$ over  $\hat V \times [0,1]$ (the total space of $\hat V \times [0,1] \to X \times [0,1]$) by pulling back the triples $\mathscr{E}$ and $\tilde{\mathscr{E}}$ over $\hat V \times [0,1/2]$, respectively $\hat V \times [1/2, 1]$ and gluing them along $\hat V \times \{1/2\}$ by means of the isomorphism $(\id,\phi)$. 
More precisely, this is done as follows. {Recall}
\[
      E_- = {E} \oplus \Lambda^- \, , \quad \tilde E_- = {\tilde E} \oplus \tilde\Lambda^- \, . 
 \]
We have the isomorphism $\phi_- : E_- \to \tilde E_-$ over $\D_-V$ with $\phi\o\omega = \tilde\omega$ over $\SS V$. 
Let $s_-$ be the 0-section of $\D_-V$ and $\phi_o = \phi_{-}|_{s_-}$, which is an isomorphism between $E_-$ and $\tilde E_-$ over $X$. 
Note that $\phi_o$ and  
$\phi_-$ are homotopic. 

We define a bundle $\hat E_- \to X \x [0,1]$ as follows. 
We put
$$
	\hat E_- = \big(E_-\x [0,1/2]\big)\cup_{\phi_o}\big(\tilde E_-\x [1/2,1]\big)
$$
using the isomorphism $\phi_o$ to identify $E_-\x\{\2\}$ with $\tilde E_-\x\{\2\}$.
This defines a vector bundle $\hat E_-\to X\x[0,1]$.
Further, we define a clutching map 
\[
     \hat\sigma : {\hat\Lambda}^+\x[0,1] \to \hat E_-
\]
over $\SS V \x [0,1]$ as follows.
$$
\hat\sigma(v,t) = \left\{\begin{matrix} \mu(v) & \textrm{for} & 0\leq t\leq 1/2  , \cr
			\phi_t\o\mu(v) &\textrm{for} & 1/2\leq t\leq 3/4 ,  \cr
				\tilde\mu(v) &\textrm{for} & 3/4\leq t\leq 1,  \cr
\end{matrix}\right.
$$
where $\phi_t: E_- \to \tilde E_-$ is a homotopy of bundle isomorphisms over $\D_-V$ for $1/2\leq t\leq 3/4$ with $\phi_{1/2} = \phi_o$ and $\phi_{3/4} = {\phi_-}$.

Then $\hat\sigma$ is well defined at $t=1/2$ since for any $\xi \in {\hat\Lambda}_x$ and $v\in \SS V_x$, the element $\mu(v)\xi\in (E_-)_x$ is identified with $\phi_o\mu(v)\xi \in (\tilde E_-)_x$.
It is also well defined at $t=3/4$ because $\phi\o\mu = \tilde\mu$, see \eqref{isomorph} with $\phi_+ = \id$. 

Thus we obtain a triple  
\[
    \hat{\mathscr{E}} = (\Lambda^+\x[0,1],\hat\sigma,\hat E_-)  
\]
over $\hat V\x[0,1]$ which restricts to $\mathscr{E}$ on $\hat V \times \{0\}$ and to $\tilde{\mathscr{E}}$  on $\hat V \times \{1\}$. 

We now apply the previous deformation process in the proof of Equation \eqref{hopftriple} in Proposition \ref{important} on $X\x[0,1]$, 
but relative to $X \times \{0,1\}$, i.e.\ we start the induction at $X' = X \times \{0,1\}$. 
Now we have to assume $d = \dim X + 1$ in {Equation} \eqref{d,p}. 
This implies that (stably) the vector bundle ${E}  \sqcup {\tilde E} \to X \times \{0,1\}$ {(called $\Lambda_0^+$ in \ref{important})} extends over $X \times [0,1]$,  and the analogous holds for the $\Cl(V)$-module bundle  ${\Lambda} \sqcup {\tilde \Lambda} \to X \times \{0,1\}$ {(called $\Lambda_1$ in \ref{important})}. 

Hence we obtain stable isomorphisms ${E} \cong_s {\tilde E}$ and ${\Lambda} \cong_s {\tilde \Lambda}$, finishing  the proof of the uniqueness statement in Theorem \ref{main}.

\section{Thom isomorphism theorems} \label{CliffordThom} 

Theorem \ref{main} can be reformulated concisely in the language of topological K-theory. 
Let $X$ be a finite CW-complex and $V \to X$ an oriented Euclidean vector bundle of rank $n = 4m \geq 0$ with associated Clifford algebra bundle $\Cl(V) \to X$. 

\begin{defn}  \label{KCL} 
We denote by  $\KK^{\Cl(V)}(X)$ the topological $\Cl(V)$-linear K-theory of $X$.
More precisely, elements in $\KK^{\Cl(V)}(X)$ are represented by formal differences of isomorphism classes  of $\Cl(V)$-module bundles  (cf. Example \ref{Lambda}), with  addition induced by the direct sum  and neutral element the trivial $\Cl(V)$-module bundle with fibre $0$. 
\end{defn} 

Each $\Cl(V)$-module bundle $\Lambda \to X$  is isomorphic to a $\Cl(V)$-submodule bundle of $\Cl(V)^q \to X$ for some $q$.
This holds in the special case $X = \{{\rm point}\}$, since each $\Cl_n$-module  is a direct sum of irreducible  $\Cl_n$-modules, each of which also occurs as a summand in  the  $\Cl_n$-module $\Cl_n$. 
\footnote{Recall that by Theorem \ref{Mk} there is, up to isomorphism, just one irreducible $\Cl_n$-module for $n = 4m$.} 

For more general $X$ we use a partition of unity subordinate to a finite cover of $X$ by open trivializing subsets for the given $\Cl(V)$-module bundle, similar as for ordinary vector bundles. 

In particular,  two $\Cl(V)$-module bundles $\Lambda, \tilde \Lambda$ represent the same element in $\KK^{Cl(V)}(X)$, if and only if  they are stably isomorphic as $\Cl(V)$-module bundles in the sense of Definition \ref{stabiso}(2).

Let $\KK^{\Or}(X)$ denote the orthogonal  topological K-theory of $X$. 
If $X$ is equipped with a base point $x_0$ recall the definition of the reduced real K-theory
\[
    \tilde \KK^{\Or}(X) = \ker ( \KK^{\Or}(X) \stackrel{restr.}{\longrightarrow} \KK^{\Or} (x_0)) \subset   \KK^{\Or}(X) \, . 
\]
 Let  $X^V = \hat V /  s_+ $ be the Thom space associated to $V\to X$ with base point $[s_+]$ (recall that  $s_+ \subset \hat V$ denotes the zero section of $\D_+ V$). 
The fibrewise projection $\pi : \hat V \to s_+ $ induces a group homomorphism 
\begin{equation} \label{gotoreduced} 
   \chi : \KK^{\Or}(\hat V) \to \tilde \KK^{\Or}(X^V) \, , \quad [E] \mapsto [E] - \pi^*[E|_{s_+}] \, . 
\end{equation} 

\begin{defn}  \label{def:CliffThom} 
The group homomorphism  
\[
    \Psi_V :    \KK^{\Cl(V)} (X)  \to  \tilde \KK^{\Or}(X^V) \, , \quad \lbrack \Lambda\rbrack  \mapsto  \chi  ([\mathscr{L}]) \, , 
\]
where  $\mathscr{L} =(\Lambda^+, \mu , \Lambda^-) \to \hat V$ is the Hopf bundle associated to $\Lambda$ (see Example \ref{Lambda}),  is called  the {\em Clifford-Thom homomorphism}. 
\end{defn} 

In this language Theorem  \ref{main} translates to the following  result.

\begin{thm} \label{clthom} 
The Clifford-Thom homomorphism $\Psi_V$  is an isomorphism. 
\end{thm}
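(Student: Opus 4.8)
The plan is to read \thmref{main} — which carries all the geometric content — directly off into the language of Grothendieck groups; no further hard input is required. First I would record the elementary topology behind $\chi$: the zero section $s_+\colon X\to\hat V$ is a cofibration with retraction the fibrewise projection $\pi\colon\hat V\to s_+(X)\cong X$, so the cofibre sequence $s_+(X)\hookrightarrow\hat V\to X^V$ yields a split short exact sequence
\[
  0\to\tilde\KK^{\Or}(X^V)\to\KK^{\Or}(\hat V)\xrightarrow{\ s_+^*\ }\KK^{\Or}(X)\to 0,
\]
with respect to which the homomorphism $\chi$ of \eqref{gotoreduced} is exactly the projection $\id-\pi^*\circ s_+^*$ onto $\tilde\KK^{\Or}(X^V)=\ker s_+^*$. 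Hence $\chi$ is surjective and $\ker\chi=\pi^*\KK^{\Or}(X)$; in particular $\chi$ annihilates every class pulled back from $X$ (because $(\pi^*F)|_{s_+}\cong F$), so it kills all trivial bundles over $\hat V$, while $\chi([\mathscr{C}])=\Psi_V([\Cl(V)])$ since the Clifford--Hopf bundle of Example~\ref{ClHopf} is the Hopf bundle of the $\Cl(V)$-module bundle $\Cl(V)$.

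For surjectivity of $\Psi_V$, since $\chi$ is onto and all maps in sight are additive it is enough to show $\chi([\mathscr{E}])\in\operatorname{image}\Psi_V$ for every vector bundle $\mathscr{E}\to\hat V$. By \thmref{main} there are $q,r\ge 0$, a bundle $E\to X$ and a $\Cl(V)$-module bundle $\Lambda\to X$ with $\mathscr{E}\oplus\mathscr{C}^{q}\oplus\underline{\R}^{r}\cong\pi^*E\oplus\mathscr{L}$, where $\mathscr{L}=(\Lambda^+,\mu,\Lambda^-)$ is the Hopf bundle of $\Lambda$. Applying $\chi$ and using the vanishing statements above gives $\chi([\mathscr{E}])=\Psi_V\big([\Lambda]-q[\Cl(V)]\big)$, as required.

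For injectivity of $\Psi_V$, let $\xi=[\Lambda_0]-[\Lambda_1]$ lie in $\ker\Psi_V$, with associated Hopf bundles $\mathscr{L}_i=(\Lambda_i^+,\mu,\Lambda_i^-)$. Since $\mathscr{L}_i|_{s_+}\cong\Lambda_i^+$ is pulled back from $X$, the relation $\chi([\mathscr{L}_0])=\chi([\mathscr{L}_1])$ unwinds in $\KK^{\Or}(\hat V)$ to $[\mathscr{L}_0]+\pi^*[\Lambda_1^+]=[\mathscr{L}_1]+\pi^*[\Lambda_0^+]$, hence, adding a trivial bundle $\underline{\R}^{N}$ for $N$ large, to an honest isomorphism $\big(\Lambda_1^+\oplus\underline{\R}^{N}\big)\oplus\mathscr{L}_0\cong\big(\Lambda_0^+\oplus\underline{\R}^{N}\big)\oplus\mathscr{L}_1$ over $\hat V$. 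This is precisely the hypothesis \eqref{unique} of the uniqueness part of \thmref{main}, whose conclusion includes $\Lambda_0\cong_s\Lambda_1$; by Definition~\ref{stabiso}(2) (equivalently the description of $\KK^{\Cl(V)}(X)$ preceding the theorem) this means $[\Lambda_0]=[\Lambda_1]$, i.e.\ $\xi=0$.

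Once \thmref{main} is available there is no genuinely hard step left; the points needing care are purely formal. One is pinning down that $\chi$ is exactly the projection coming from the split cofibre sequence, so that $\ker\chi$ consists precisely of pullbacks from $X$ — this is what makes both halves go through. A second is matching the normal form $E\oplus\mathscr{L}$ of \thmref{main} against the classes $\chi([\mathscr{E}])$ and $\chi([\mathscr{L}_i])$: since $\chi$ records a bundle on $\hat V$ only modulo pullbacks from $X$, the summand $E$ — the part \thmref{main} leaves ambiguous — never obstructs anything, while the summand $\Lambda$ — which \thmref{main} pins down stably — is exactly what $\Psi_V$ must detect. Finally one should check at the outset that $\Psi_V$ is well defined, i.e.\ that $\Lambda\cong_s\tilde\Lambda$ forces $[\mathscr{L}_\Lambda]=[\mathscr{L}_{\tilde\Lambda}]$ in $\KK^{\Or}(\hat V)$; this is immediate, since a $\Cl(V)$-isomorphism $\Lambda\oplus\Cl(V)^q\cong\tilde\Lambda\oplus\Cl(V)^q$ induces an ordinary isomorphism $\mathscr{L}_\Lambda\oplus\mathscr{C}^q\cong\mathscr{L}_{\tilde\Lambda}\oplus\mathscr{C}^q$ over $\hat V$.
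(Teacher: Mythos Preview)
Your proposal is correct and follows exactly the approach the paper intends: the paper itself gives no explicit proof of Theorem~\ref{clthom}, merely asserting that ``Theorem~\ref{main} translates to'' it, and you have carried out this translation carefully and correctly. The only things to note are that you have made explicit what the paper leaves implicit---the split cofibre sequence identifying $\tilde\KK^{\Or}(X^V)$ with $\ker s_+^*$, the computation $\chi([\mathscr{C}])=\Psi_V([\Cl(V)])$, and the passage from equality in $\KK^{\Or}(\hat V)$ to an honest isomorphism after stabilizing---and each of these steps is handled accurately.
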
 

We will now point out that Theorem \ref{clthom} implies classical Thom  isomorphism theorems for orthogonal, unitary and symplectic  K-theory (the orthogonal and unitary cases are also treated in  \cite[Theorem IV.5.14]{KaroubiBook}). 

Assume that $V \to X$ is equipped with a spin structure and let $\Sigma \to X$ be the spinor bundle associated to $V \to X$, compare Example \ref{spinor}. 
In this case $\Cl(V)$-module bundles are of a particular form.

Recall the notion of the {\it tensor product over $\H$} (e.g.\ see \cite[p.\,18]{D}):
Let $P$ a $\H$-right vector space and $Q$ a $\H$-left vector space. 
Then
\bea
	P\otimes_{\H} Q &:=& (P\otimes Q)/R\,, \textrm{ where }\cr
	 R &:=& {\rm Span}\,\{p\lambda\otimes q-p\otimes\lambda q: p\in P, q\in Q, \lambda\in\H\}.
\nonumber
\eea
This is a vector space over $\R$, not over $\H$.

If $m$ is odd then  the right $\H$-multiplication on $S$ commutes with the left $\Cl_{n}$-action (see Table \eqref{Mk}), and hence $\Sigma$ is a bundle of right $\H$-modules in a canonical way. 
We equivalently regard $S$ and $\Sigma$ as left $\H$-modules by setting $\lambda \cdot s := s \lambda^{-1}$.

\begin{prop} \label{meven}
Let $\Lambda$ be a $\Cl(V)$-module bundle. 
Then $\Lambda$ is a twisted spinor bundle, that is there exists a Euclidean vector bundle $E\to X$ over $\R$ or $\H$ when $m$ is even or odd,
respectively, such that
\beq \label{EoxSigma}
	\Lambda \cong \left\{\begin{matrix}
	E \otimes \Sigma     & \textrm{when $m$ is even } \cr
	E \otimes_{\H} \Sigma & \textrm{when $m$ is odd }
\end{matrix}\right.
\eeq
If two such bundles $\Lambda = E\otimes_{(\H)} \Sigma$ and  $\tilde\Lambda = \tilde E\otimes_{(\H)}\Sigma$ are isomorphic as $\Cl(V)$-module bundles, then $E\cong\tilde E$.
\end{prop}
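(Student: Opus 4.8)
The plan is to exploit the representation theory of $\Cl_n$ for $n = 4m$ recorded in Theorem~\ref{perclif}: the unique irreducible $\Cl_n$-module $S = S_n$ has endomorphism algebra $\End_{\Cl_n}(S) = \R$ when $m$ is even and $\End_{\Cl_n}(S) = \H$ when $m$ is odd (this is visible from the table, since $C_n = \R(s_n)$ or $\H(s_n/4)$ according to the parity of $m$, so the commutant of $C_n$ in $\End(S)$ is $\R$ or $\H$). Consequently, for a single $\Cl_n$-module $L$, Schur's lemma gives a canonical isomorphism $L \cong \Hom_{\Cl_n}(S,L)\otimes S$ when $m$ is even and $L \cong \Hom_{\Cl_n}(S,L)\otimes_\H S$ when $m$ is odd, where $\Hom_{\Cl_n}(S,L)$ is a real (resp.\ quaternionic) vector space. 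First I would set up the bundle $E := \underline{\Hom}_{\Cl(V)}(\Sigma, \Lambda) \to X$, the bundle whose fibre over $x$ is the space of $\Cl(V_x)$-linear maps $\Sigma_x \to \Lambda_x$; it is a real vector bundle when $m$ is even and carries a fibrewise right $\H$-action (hence is a quaternionic bundle, after the convention $\lambda\cdot s := s\lambda^{-1}$) when $m$ is odd, because the $\H$-action on $\Sigma$ commutes with Clifford multiplication.

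Next I would construct the evaluation map $\mathrm{ev}: E \otimes_{(\H)} \Sigma \to \Lambda$, $f \otimes s \mapsto f(s)$, which is $\Cl(V)$-linear since the $\Cl(V)$-action on the tensor product is through the $\Sigma$-factor. Fibrewise this is exactly the Schur isomorphism above, so $\mathrm{ev}$ is an isomorphism of $\Cl(V)$-module bundles. This already gives \eqref{EoxSigma}; one should also record that $E$ can be given a Euclidean (resp.\ quaternionic Hermitian) structure, e.g.\ by averaging, or simply by noting $E$ is a subbundle of $\underline{\Hom}(\Sigma,\Lambda)$ with its induced metric, which suffices for the statement. For the uniqueness clause, suppose $\Lambda = E\otimes_{(\H)}\Sigma$ and $\tilde\Lambda = \tilde E\otimes_{(\H)}\Sigma$ are isomorphic as $\Cl(V)$-module bundles via $\Phi$. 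Applying the functor $\underline{\Hom}_{\Cl(V)}(\Sigma, -)$ and using the canonical identification $\underline{\Hom}_{\Cl(V)}(\Sigma, E'\otimes_{(\H)}\Sigma) \cong E'$ (again Schur, fibrewise, naturally in $E'$), the isomorphism $\Phi$ induces an isomorphism $E \cong \tilde E$ of real (resp.\ quaternionic) vector bundles.

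The one genuinely delicate point is the $m$ odd case and keeping the $\H$-module conventions straight: one must check that $\underline{\Hom}_{\Cl(V)}(\Sigma,\Lambda)$ is naturally a \emph{left} $\H$-module bundle with respect to the convention $\lambda\cdot f := f \circ (\text{right mult.\ by }\lambda^{-1})$ adopted just before the proposition, and that under this convention the balanced tensor product $E\otimes_\H\Sigma$ in the statement matches the evaluation map — i.e.\ the relations $f\lambda\otimes s = f\otimes\lambda s$ being quotiented out are precisely those respected by $\mathrm{ev}$. This is a bookkeeping matter, but it is where sign/side errors creep in; the cleanest route is to phrase everything in terms of $\End_{\Cl_n}(S) = \K$ with $\K \in \{\R,\H\}$, note $S$ is then a $(\Cl_n,\K)$-bimodule, observe $L \cong \Hom_{\Cl_n}(S,L)\otimes_\K S$ canonically and naturally for every $\Cl_n$-module $L$, and then globalize over $X$ using the spin structure to see that all these constructions are compatible with the transition functions of $\Sigma$ and $\Lambda$ (the transition functions of $\Sigma$ lie in $\Spin_n$, which acts $\K$-linearly on $S$, so $\underline{\Hom}_{\Cl(V)}(\Sigma,\Lambda)$ is a well-defined $\K$-bundle). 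Everything else is a routine fibrewise application of Schur's lemma promoted to bundles via local triviality.
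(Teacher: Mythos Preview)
Your proposal is correct and follows essentially the same route as the paper: define $E = \underline{\Hom}_{\Cl(V)}(\Sigma,\Lambda)$, use the evaluation map $\phi\otimes\xi\mapsto\phi(\xi)$, and verify bijectivity fibrewise via Schur's lemma and the identification $\End_{\Cl_n}(S)=\K$ with $\K\in\{\R,\H\}$ depending on the parity of $m$. The only cosmetic difference is in the uniqueness clause: you apply the functor $\underline{\Hom}_{\Cl(V)}(\Sigma,-)$ to $\Phi$, whereas the paper argues fibrewise that any $\Cl_n$-linear isomorphism $S^p\to S^p$ is a matrix over $\K$ and hence of the form $f\otimes_{(\H)}\id_\Sigma$; these are the same argument in functorial versus explicit guise.
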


\proof
Following \cite[p.\,115]{BGV}, we put
$$
	E = \Hom_{\Cl(V)}(\Sigma,\Lambda).
$$
The isomorphism in \eqref{EoxSigma} is given by
\beq \label{jdef}
	j : E\otimes\Sigma\to\Lambda, \phi\otimes\xi \mapsto \phi(\xi).
\eeq
This is a $\Cl(V)$-linear bundle homomorphism: For all $\alpha\in \Cl(V)$,
$$
j(\alpha\.( \phi\otimes\xi) ) = j(\phi\otimes(\alpha\.\xi))  = \phi(\alpha\.\xi) = \alpha\.\phi(\xi) = \alpha\.j(\phi\otimes\xi).
$$
To check bijectivity we look at the fibers $L$ of $\Lambda$ and $S$ of $\Sigma$.
 Set $C := \Cl_n$. 
 The fibre of  $E = \Hom_{\Cl(V)}(\Sigma,\Lambda)$ is $\Hom_{C} (S,L)$ and the homomorphism $j$ is fiberwise the linear map 
$$
	j_o : \Hom_C(S,L)\otimes S \to L, \ (\phi,s)\mapsto \phi(s).
$$
Since $S$ is the unique irreducible $C$-representation we have $L \cong S^p = \R^p\otimes S$ for some $p$, and $\Hom_C(S,L) = \Hom_C(S,S)^p = \End_C(S)^p$.
Applying $j_o$ to $\phi = (0, \ldots, \id_S, \ldots, 0)$ with the identity at the $k$-th slot for $k=1,\dots,p$, we see that this map is onto. 
When $m$ is even we have $C = \End_{\R}(S)$, and when $m$ is odd, $C = \End_{\H}(S)$ (cf.\ Table \eqref{Mk}). 
Thus $\End_C(S) = \R\.\id_S$ when $m$ is even and $\End_C(S) = \H\.\id_S$  when $m$ is odd.
Note that in the second case an element $\lambda \in \H$ corresponds to  $\hat\lambda := R_{\lambda^{-1}} \in  \End_C(S)$ (right multiplication with $\lambda^{-1}$ on $S$, that is left multiplication with $\lambda$ with respect to the left $\H$-module structure defined before) in order to make the identification a ring map.\footnote 
	{$\End_C(S)$ is an (associative) division algebra, 
	thus isomorphic to $\H$ or $\C$ or $\R$. In fact, 
	kernel and image of any $A\in\End_C(S)$
	are $C$-invariant subspaces of the irreducible $C$-module $S$, hence $A$ is
	invertible or $0$. When $m$ is odd, $C = \End_{\H}(S)$ and therefore
	$\End_C(S)\supset\H$, hence $\End_C(S) = \H$. 
	When $m$ is even, $C = \End_{\R}(S)$.  
	Any $A\in\End_C(S)$ has a real or complex eigenvalue 
	and hence an invariant line or plane in $S$. 
	Since it commutes with all endomorphisms on $S$, {\em every} line or plane is
	$A$-invariant since $GL(S)\subset\End(S)$ acts transitively on the 
	Grassmannians. Thus $A$ is a real multiple of the identity. 
	(See also Wedderburn's theorem.)}

In the first case, $\Hom_C(S,L) = \End_C(S)^p\otimes S = \R^p\otimes S = S^p$. 
Thus $j_o$ is an isomorphism whence $j$ is an isomorphism.

In the second case we consider $\Hom_C(S,L)$ as a $\H$-right vector space using pre-composition $\phi\mapsto \phi\o\hat\lambda$, where $\hat\lambda = R_{\lambda^{-1}} \in \End_C(S) = \H$.
We hence compute 
\[ 
      j_o(\phi\lambda,\xi) = ( \phi \circ \hat\lambda ) ( \xi ) =  \phi ( \xi \lambda^{-1}) = \phi ( \lambda \cdot \xi) = j_o(\phi,\lambda  \cdot \xi) \, . 
\]
Thus $j_o$ descends to $\bar j_o : \Hom_C(L,S)\otimes_{\H}S$. 
Since $\Hom_C(L,S) \cong \Hom_C(S^p,S) = \End_C(S)^p = \H^p$ and $\H\otimes_{\H}S \cong S$ via the map  $\lambda\otimes_{\H}s = 1\otimes_{\H}\lambda s\mapsto \lambda s$, we obtain that $\bar j_o$ maps $\Hom_C(S,L)\otimes_{\H} S \cong \H^p\otimes_{\H}S = (\H\otimes_{\H}S)^p \cong S^p$ isomorphically onto $L\cong S^p$.

A $\Cl(V)$-linear isomorphism $\phi$ between two such bundles $E\otimes_{(\H)}\Sigma$ and  $\tilde E\otimes_{(\H)} \Sigma$ is fiberwise a $C$-linear isomorphism $\phi_o : S^p\to S^p$. 
This is an invertible $(p\x p)$-matrix $A = (a_{ij})$ with coefficients in $\End_C(S) = \K$ where $\K = \R$ when $m$ is even and $\K = \H$ when $m$ is odd. 
In the first case we have $S^p = \R^p\otimes S$ and $j_o = A\otimes\id_S$. 
In the second case we let $S^p = \H^p\otimes_{\H}S$ using the isomorphism $se_i \mapsto e_i\otimes_{\H}s$ for all $s\in S$ and $i=1,\dots,p$  where $e_1 = (1,0,\dots,0)$, $\dots$, $e_p = (0,\dots,0,1)$.
Then 
\[
   \phi_o(se_i) = \sum_j a_{ij}se_j\mapsto \sum_j e_j\otimes_{\H}a_{ij}s = \sum_j e_ja_{ij}\otimes_{\H} s = Ae_i\otimes_{\H}s
\]
and hence $\phi_o = A\otimes_{\H}\id_S$.

Therefore $\phi = f\otimes \id_{\Sigma}$ for some $\R$-linear isomorphism
$f : E\to\tilde E$ in the first case, and $\phi = f\otimes_{\H}\id_\Sigma$
for some $\H$-linear isomorphism $f : E\to\tilde E$ in the second case.
\endproof

\begin{lem} \label{95}
Let $\Lambda , \tilde \Lambda \to X$ be $\Cl(V)$-module bundles. 
Then the following assertions are equivalent. 
\begin{enumerate}[(i)]  
   \item  $\Lambda$ and $\tilde \Lambda$ are stably isomorphic as $\Cl(V)$-module bundles. 
   \item There is some $q \geq 0$, such that $\Lambda \oplus \Sigma^q \cong \tilde \Lambda \oplus \Sigma^q$ are isomorphic $\Cl(V)$-module bundles 
   \end{enumerate} 
   \end{lem}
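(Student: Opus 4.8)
The equivalence of (i) and (ii) is purely a matter of comparing two notions of "adding copies of a generator": in (i) one is allowed to add copies of the free module bundle $\Cl(V)^q$, whereas in (ii) one adds copies of the spinor bundle $\Sigma$. Since $n=4m$, there is (up to isomorphism) a unique irreducible $\Cl_n$-module $S$, so the free module $\Cl_n$ is a direct sum of copies of $S$; concretely $\Cl_n \cong S^{a}$ as a $\Cl_n$-module with $a = \dim_{\R}\Cl_n / s_n = 2^n/s_n$. The plan is to globalize this fiberwise identity. The implication (ii)$\Rightarrow$(i) is the easy direction: if $\Lambda\oplus\Sigma^q\cong\tilde\Lambda\oplus\Sigma^q$, then adding $\Sigma^{(a-1)q}$ to both sides and using $\Sigma^a\cong\Cl(V)$ (see below) gives $\Lambda\oplus\Cl(V)^q\cong\tilde\Lambda\oplus\Cl(V)^q$, which is precisely (i). For (i)$\Rightarrow$(ii): starting from $\Lambda\oplus\Cl(V)^q\cong\tilde\Lambda\oplus\Cl(V)^q$, replace each $\Cl(V)$ summand by $\Sigma^a$ to obtain $\Lambda\oplus\Sigma^{aq}\cong\tilde\Lambda\oplus\Sigma^{aq}$, which is (ii) with $q$ replaced by $aq$.

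The one nontrivial input is the global isomorphism $\Sigma^{a}\cong\Cl(V)$ of $\Cl(V)$-module bundles, where $a=2^n/s_n$. I would deduce this from Proposition \ref{meven}: since $\Cl(V)$ is itself a $\Cl(V)$-module bundle, that proposition gives a Euclidean vector bundle $E\to X$ (over $\R$ if $m$ is even, over $\H$ if $m$ is odd) with $\Cl(V)\cong E\otimes_{(\H)}\Sigma$. Restricting to a point and counting dimensions shows $\rk E = a$ in the real case and $\rk_{\H}E = a/4$ in the quaternionic case. If $E$ were trivial this would immediately give $\Cl(V)\cong\Sigma^{a}$; in general $E$ need not be trivial, so instead I would argue stably: $E$ embeds into a trivial bundle $\underline{\K}^{N}$, hence $E\otimes_{(\H)}\Sigma$ is a $\Cl(V)$-submodule bundle of $\underline{\K}^N\otimes_{(\H)}\Sigma\cong\Sigma^{N}$ (real case) or $\cong\Sigma^{N}$ suitably interpreted in the quaternionic case, and a $\Cl(V)$-invariant complement is again of the form $E'\otimes_{(\H)}\Sigma$. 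This shows every $\Cl(V)$-module bundle is, up to adding another such bundle, a direct sum of copies of $\Sigma$ — which is in fact all that is needed: it lets one replace every occurrence of $\Cl(V)^q$ in (i) by a sum of $\Sigma$'s after adding a common correction term, and vice versa, and these added terms cancel in the stable isomorphism. This reduces the whole lemma to the bookkeeping above.

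The main obstacle is precisely the non-triviality of the auxiliary bundle $E$ in Proposition \ref{meven}: one cannot literally write $\Cl(V)\cong\Sigma^{a}$, only $\Cl(V)\cong E\otimes_{(\H)}\Sigma$, so the argument must be phrased stably and one must check that the "correction bundles'' introduced when passing back and forth between $\Cl(V)^q$-summands and $\Sigma$-summands cancel consistently on both sides of the isomorphism. A clean way to organize this is: show first that for every $\Cl(V)$-module bundle $\Lambda$ there exist $q_1,q_2\geq 0$ with $\Lambda\oplus\Sigma^{q_1}\cong\Cl(V)^{q_2}$ as $\Cl(V)$-module bundles (by embedding the $E$ associated to a large free module into a trivial bundle and taking complements); granting this, both implications of the lemma follow by adding suitable powers of $\Sigma$ and $\Cl(V)$ to the given stable isomorphism and simplifying, with all auxiliary terms absorbed into the stabilization. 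The quaternionic case $m$ odd requires only keeping track of the identification $\H\otimes_{\H}S\cong S$ already recorded in the proof of Proposition \ref{meven}, and introduces no new difficulty.
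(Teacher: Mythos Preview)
Your core idea is correct and matches the paper's: both directions reduce to showing that $\Sigma$ occurs as a $\Cl(V)$-submodule bundle of some $\Cl(V)^q$ and, conversely, that $\Cl(V)$ occurs as a $\Cl(V)$-submodule bundle of some $\Sigma^{q'}$. Once one has $\Sigma\oplus\Sigma'\cong\Cl(V)^q$ and $\Cl(V)\oplus C'\cong\Sigma^{q'}$, adding the appropriate complements to a given isomorphism converts $\Cl(V)$-stabilization into $\Sigma$-stabilization and back. The paper simply asserts these two embedding facts directly, noting that they are obvious over a point (there being a single irreducible $\Cl_n$-module for $n=4m$) and follow for general $X$ by the same partition of unity argument already used right after Definition~\ref{KCL}. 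Your detour through Proposition~\ref{meven}, writing $\Cl(V)\cong E\otimes_{(\H)}\Sigma$ and then embedding $E$ into a trivial bundle, is a legitimate alternative way to obtain $\Cl(V)\hookrightarrow\Sigma^N$, but it is considerably more work than the paper's one-line partition of unity.

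One genuine error: your ``clean way'' reformulation claims that for every $\Cl(V)$-module bundle $\Lambda$ there exist $q_1,q_2$ with $\Lambda\oplus\Sigma^{q_1}\cong\Cl(V)^{q_2}$. This is false in general. Already for $\Lambda=0$ it would force $\Sigma^{q_1}\cong\Cl(V)^{q_2}$; but (say for $m$ even) Proposition~\ref{meven} gives $\Cl(V)\cong \Sigma^*\otimes\Sigma\cong\Sigma\otimes\Sigma$, so by the uniqueness in that proposition one would need $\underline{\R}^{q_1}\cong\Sigma^{q_2}$, i.e.\ $\Sigma$ stably trivial, which fails for nontrivial spin bundles $V$. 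Fortunately you do not need this overstatement: your earlier observation that $\Cl(V)$ is a \emph{summand} of $\Sigma^N$ (with complement some $E'\otimes_{(\H)}\Sigma$, not necessarily $\Sigma^{q_1}$) is already enough, and together with the analogous statement in the other direction it gives exactly the paper's argument.
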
 
   
\begin{proof} This follows since $\Sigma \to X$ is a $\Cl(V)$-submodule bundle of $\Cl(V)^q \to X$ for some $q \geq 0$, and vice versa $\Cl(V)\to X$ is a $\Cl(V)$-submodule bundle of $\Sigma^q \to X $ for some $q$. 
Again this is obvious if $X$ is equal to a point and follows for general $X$ by a partition of unity argument. 
\end{proof} 

Together with  Proposition \ref{meven} this implies 

\begin{prop}  \label{96}
Let $V \to X$ be of rank $4m$ 
and equipped with a spin structure, and let $\Sigma \to X$ denote the associated spinor bundle. 
\begin{enumerate}[(a)] 
\item For even $m$ the map 
\[
     \KK^{\Or} (X) \to \KK^{\Cl(V)}(X) \, , \quad \lbrack E \rbrack \mapsto \lbrack E \otimes \Sigma \rbrack \, , 
\]
is an isomorphism. 
\item For odd $m$ the map 
\[
     \KK^{{\rm Sp}} (X) \to \KK^{\Cl(V)}(X) \, , \quad \lbrack E \rbrack \mapsto \lbrack E \otimes_{\H}  \Sigma \rbrack \, , 
\]
is an isomorphism, where $\KK^{\rm Sp}$ denotes symplectic K-theory based on $\H$-right vector bundles.
\end{enumerate} 
\end{prop}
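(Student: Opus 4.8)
The plan is to deduce Proposition~\ref{96} directly from Theorem~\ref{clthom} and Proposition~\ref{meven}, using that the spinor bundle $\Sigma$ generates $\KK^{\Cl(V)}(X)$ over the appropriate ground ring. First I would observe that the assignment $[E]\mapsto[E\otimes_{(\H)}\Sigma]$ is a well-defined group homomorphism: tensoring with the fixed bundle $\Sigma$ is additive in $E$ (and in the odd case $E\otimes_{\H}\Sigma$ is $\R$-linear and carries the $\Cl(V)$-action coming from the left $\Cl(V)$-module structure on $\Sigma$, which commutes with the right $\H$-action), and a $\Cl(V)$-linear isomorphism $E\cong E'$ induces one $E\otimes_{(\H)}\Sigma\cong E'\otimes_{(\H)}\Sigma$, so the map descends to K-theory.

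Next I would prove surjectivity. By Proposition~\ref{meven}, every $\Cl(V)$-module bundle $\Lambda\to X$ is isomorphic to $E\otimes\Sigma$ (for even $m$) or $E\otimes_{\H}\Sigma$ (for odd $m$) for some Euclidean vector bundle $E$ over $\R$, respectively $\H$; hence the class $[\Lambda]\in\KK^{\Cl(V)}(X)$ is in the image, and since these classes generate $\KK^{\Cl(V)}(X)$ the map is onto. For injectivity I would argue as follows. Suppose $[E\otimes_{(\H)}\Sigma]=[\tilde E\otimes_{(\H)}\Sigma]$ in $\KK^{\Cl(V)}(X)$, i.e.\ $E\otimes_{(\H)}\Sigma$ and $\tilde E\otimes_{(\H)}\Sigma$ are stably isomorphic as $\Cl(V)$-module bundles (Definition~\ref{stabiso}(2)). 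By Lemma~\ref{95} there is some $q$ with $(E\otimes_{(\H)}\Sigma)\oplus\Sigma^q\cong(\tilde E\otimes_{(\H)}\Sigma)\oplus\Sigma^q$ as $\Cl(V)$-module bundles; writing $\Sigma^q\cong\underline{\K}^q\otimes_{(\H)}\Sigma$ this becomes $((E\oplus\underline{\K}^q)\otimes_{(\H)}\Sigma)\cong((\tilde E\oplus\underline{\K}^q)\otimes_{(\H)}\Sigma)$, and the uniqueness clause of Proposition~\ref{meven} yields $E\oplus\underline{\K}^q\cong\tilde E\oplus\underline{\K}^q$, so $[E]=[\tilde E]$ in $\KK^{\Or}(X)$, respectively $\KK^{\Sp}(X)$. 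This shows the map is injective, hence an isomorphism.

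The step I expect to require the most care is the bookkeeping in the odd ($m$ odd) case: one must keep straight that $\Sigma$ is a bundle of \emph{right} $\H$-modules commuting with the left $\Cl(V)$-action, that $E$ is a right $\H$-bundle, that $E\otimes_{\H}\Sigma$ is only an $\R$-bundle, and that the $\Cl(V)$-action survives the quotient; and, in applying Lemma~\ref{95}, that $\Sigma^q$ really is of the form $(\text{trivial }\H\text{-bundle})\otimes_{\H}\Sigma$ so that Proposition~\ref{meven}'s uniqueness statement applies to $E\oplus\underline{\H}^q$ versus $\tilde E\oplus\underline{\H}^q$. All of this is routine once the conventions from Proposition~\ref{meven} and the discussion of $\otimes_{\H}$ preceding it are in force, so the proof itself is short.

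\begin{proof}[Proof of Proposition~\ref{96}]
We treat both cases simultaneously, writing $\otimes_{(\H)}$ for $\otimes$ when $m$ is even and for $\otimes_{\H}$ when $m$ is odd, and $\K=\R$, respectively $\H$. Tensoring with the fixed bundle $\Sigma$ is additive in the first variable and sends $\Cl(V)$-linear isomorphisms to $\Cl(V)$-linear isomorphisms (in the odd case the left $\Cl(V)$-action on $\Sigma$ commutes with the right $\H$-action and hence descends to $E\otimes_{\H}\Sigma$), so the assignment $[E]\mapsto[E\otimes_{(\H)}\Sigma]$ defines a group homomorphism $\KK^{\Or}(X)\to\KK^{\Cl(V)}(X)$, respectively $\KK^{\Sp}(X)\to\KK^{\Cl(V)}(X)$.

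Surjectivity: every $\Cl(V)$-module bundle $\Lambda\to X$ is by Proposition~\ref{meven} isomorphic to $E\otimes_{(\H)}\Sigma$ for a suitable $\K$-bundle $E\to X$, so $[\Lambda]$ lies in the image; since such classes generate $\KK^{\Cl(V)}(X)$, the map is onto.

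Injectivity: assume $[E\otimes_{(\H)}\Sigma]=[\tilde E\otimes_{(\H)}\Sigma]$, i.e.\ these two $\Cl(V)$-module bundles are stably isomorphic in the sense of Definition~\ref{stabiso}(2). By Lemma~\ref{95} there is some $q\geq 0$ with
\[
    (E\otimes_{(\H)}\Sigma)\oplus\Sigma^q \cong (\tilde E\otimes_{(\H)}\Sigma)\oplus\Sigma^q
\]
as $\Cl(V)$-module bundles. Using $\Sigma^q\cong\underline{\K}^q\otimes_{(\H)}\Sigma$ and distributivity of $\otimes_{(\H)}$ over direct sums, the left side is $(E\oplus\underline{\K}^q)\otimes_{(\H)}\Sigma$ and the right side is $(\tilde E\oplus\underline{\K}^q)\otimes_{(\H)}\Sigma$. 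The uniqueness statement in Proposition~\ref{meven} now gives $E\oplus\underline{\K}^q\cong\tilde E\oplus\underline{\K}^q$ as $\K$-bundles, hence $[E]=[\tilde E]$ in $\KK^{\Or}(X)$, respectively $\KK^{\Sp}(X)$. Thus the homomorphism is injective, and therefore an isomorphism.
\end{proof}
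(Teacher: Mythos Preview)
Your proof is correct and follows essentially the same approach as the paper: surjectivity via the existence part of Proposition~\ref{meven}, and injectivity by combining Lemma~\ref{95} (to replace $\Cl(V)^q$ by $\Sigma^q \cong \underline{\K}^q\otimes_{(\H)}\Sigma$) with the uniqueness part of Proposition~\ref{meven}. Two minor remarks: Theorem~\ref{clthom} is not actually needed here (it is combined with this proposition afterwards to obtain Theorem~\ref{ThomClassic}), and in your well-definedness sentence you mean that a $\K$-linear isomorphism $E\cong E'$ induces a $\Cl(V)$-linear isomorphism $E\otimes_{(\H)}\Sigma\cong E'\otimes_{(\H)}\Sigma$, not that $\Cl(V)$-linear isomorphisms are sent to $\Cl(V)$-linear isomorphisms.
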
 

\proof
The maps in (a) and (b) are well defined and one-to-one: Let $E,\tilde E \to X$ be vector bundles.
Then $\Lambda = E\otimes\Sigma$
is stably isomorphic to $\tilde\Lambda = \tilde E\otimes\Sigma$ $\buildrel \ref{95}\over\iff$
$\Lambda \oplus \Sigma^q=(E\oplus\underline{\R}^q)\otimes\Sigma$ (for some $q\in\N$ given by \ref{95}) is isomorphic to
$\tilde\Lambda\oplus\Sigma^q=(\tilde E\oplus\underline{\R}^q)\otimes\Sigma$
$\buildrel \ref{meven}\over\iff$ $E\oplus\underline{\R}^q\cong\tilde E\oplus \underline{\R}^q$ $\iff$ 
$E\cong_s\tilde E$. Further, these maps are
onto by Prop.\ \ref{meven}.
\endproof

Hence we obtain the classical Thom isomorphism  by composing the isomorphisms of \ref{96} and \ref{clthom}, using that the spinor Hopf bundle $\mathscr S$ (cf.\ \ref{spinorhopf}) is the Hopf bundle (cf.\ \ref{clrepsphere}) associated to the spinor bundle $\Sigma$:

\begin{thm} \label{ThomClassic} Let $V \to X$ be equipped with a spin structure and let $\mathscr{S}\to\hat V$ denote the spinor Hopf bundle associated to $V \to X$ (see Definition \ref{spinorhopf}). 
\begin{enumerate}[(a)] 
\item For even $m$  the map 
\[
     \KK^{\Or} (X) \to \tilde  \KK^{\Or}(X^V) \, , \quad \lbrack E \rbrack \mapsto \chi (  [ E \otimes \mathscr{S} ] ) , 
\]
is an isomorphism. 
\item For odd $m$ the map 
\[
     \KK^{{\rm Sp}} (X) \to \tilde  \KK^{\Or}(X^V) \, , \quad \lbrack E \rbrack \mapsto \chi (  [ E \otimes_{\H} \mathscr{S}] ) , 
\]
 is an isomorphism. 
\end{enumerate} 
Hence $\chi ( [ \mathscr{S}])  \in  \tilde  \KK^{\Or}(X^V)$ serves as the ``$\KK^{\Or}$-theoretic Thom class'' of $V$. 
\end{thm}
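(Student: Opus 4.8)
The plan is to obtain Theorem~\ref{ThomClassic} as a formal consequence of the three results already assembled: the Clifford--Thom isomorphism \ref{clthom}, the identification of $\Cl(V)$-module bundles with twisted spinor bundles \ref{meven}, and the algebraic bookkeeping in Proposition~\ref{96}. The point is that nothing new needs to be proved; one only has to check that the relevant diagram of maps commutes.

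First I would record the commutative triangle. For even $m$, Proposition~\ref{96}(a) gives the isomorphism $\KK^{\Or}(X) \to \KK^{\Cl(V)}(X)$, $[E] \mapsto [E \otimes \Sigma]$, and Theorem~\ref{clthom} gives the isomorphism $\Psi_V : \KK^{\Cl(V)}(X) \to \tilde\KK^{\Or}(X^V)$, $[\Lambda] \mapsto \chi([\mathscr{L}])$, where $\mathscr{L} = (\Lambda^+,\mu,\Lambda^-)$ is the Hopf bundle of $\Lambda$. Composing, $[E] \mapsto \chi([\mathscr{L}_{E\otimes\Sigma}])$ where $\mathscr{L}_{E\otimes\Sigma}$ is the Hopf bundle associated to $E\otimes\Sigma$. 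So the only thing to verify is that this Hopf bundle is exactly $E \otimes \mathscr{S}$, i.e.\ that forming the Hopf bundle commutes with twisting by a bundle pulled back from $X$. This is immediate from the clutching description: if $\Lambda = E \otimes \Sigma$ then $\Lambda^\pm = E \otimes \Sigma^\pm$ since the volume section acts only on the $\Sigma$-factor, and the Clifford multiplication on $\Lambda$ is $\id_E \otimes \mu_\Sigma$; hence the triple $(\Lambda^+,\mu,\Lambda^-)$ over $\hat V$ equals $(E\otimes\Sigma^+, \id_E\otimes\mu, E\otimes\Sigma^-) = E \otimes (\Sigma^+,\mu,\Sigma^-) = E \otimes \mathscr{S}$, using here that $E$ is pulled back along $\pi:\hat V\to X$ and that tensoring a clutching triple by a pulled-back bundle tensors each entry of the triple. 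Therefore the composite $\KK^{\Or}(X)\to\tilde\KK^{\Or}(X^V)$ is precisely $[E]\mapsto\chi([E\otimes\mathscr{S}])$, and being a composite of two isomorphisms it is an isomorphism. This proves part (a).

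Next I would treat part (b) identically, replacing $\Or$ by $\Sp$ and $\otimes$ by $\otimes_\H$. Proposition~\ref{96}(b) supplies the isomorphism $\KK^{\Sp}(X)\to\KK^{\Cl(V)}(X)$, $[E]\mapsto[E\otimes_\H\Sigma]$ for odd $m$, again using that the right $\H$-action on $\Sigma$ commutes with the left $\Cl(V)$-action (Table~\eqref{Mk}). The same clutching computation as before — now observing that $(E\otimes_\H\Sigma)^\pm = E\otimes_\H\Sigma^\pm$ and that the Clifford multiplication descends to the quotient by $R$ since it is $\H$-linear on the $\Sigma$-factor — identifies the Hopf bundle of $E\otimes_\H\Sigma$ with $E\otimes_\H\mathscr{S}$. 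Composing with $\Psi_V$ gives the isomorphism $[E]\mapsto\chi([E\otimes_\H\mathscr{S}])$ of part (b).

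Finally, the concluding sentence about $\chi([\mathscr{S}])$ being the Thom class follows by taking $E = \underline{\R}$ (respectively $E=\underline{\H}$) and noting that $\underline{\R}\otimes\mathscr{S} = \mathscr{S}$: in the isomorphism of part (a) (or (b)) the class of the trivial rank-one bundle, which generates $\KK^{\Or}(X)$ (respectively $\KK^{\Sp}(X)$) as a module over itself, maps to $\chi([\mathscr{S}])$, so multiplication by $\chi([\mathscr{S}])$ realizes the Thom isomorphism in the usual sense. The only place any care is needed is the clutching-level identity ``Hopf bundle of a twisted module bundle $=$ twist of the Hopf bundle'', and especially the $\otimes_\H$ version where one must check the $R$-relations are respected fibrewise; but this is a routine unwinding of Definitions~\ref{Lambda}, \ref{hopfbun} and~\ref{spinorhopf}, so I do not expect a genuine obstacle — the substance of the theorem lies entirely in \ref{clthom} and \ref{meven}, which are already established.
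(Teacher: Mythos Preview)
Your proof is correct and follows exactly the paper's approach: the theorem is obtained by composing the isomorphism of Proposition~\ref{96} with the Clifford--Thom isomorphism~\ref{clthom}, together with the observation that $\mathscr{S}$ is the Hopf bundle of $\Sigma$ (so the Hopf bundle of $E\otimes_{(\H)}\Sigma$ is $E\otimes_{(\H)}\mathscr{S}$). You have simply spelled out the clutching-level verification that the paper leaves implicit.
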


There are analogues of the theorems \ref{main} and \ref{clthom} for complex and quaternionic vector bundles over $\hat V$. Detailed proofs of the following statements are left to the reader.

\begin{thm} \label{allg}  Let $X$ be a finite CW-complex and $V \to X$ an oriented Euclidean vector bundle with associated Clifford algebra bundle $\Cl(V) \to X$. 
  \begin{enumerate}[(a)] 
     \item Let $\rk V = 4m$.
             Then there is a Clifford-Thom isomorphism 
      \[
         \KK^{\Cl(V) \otimes \H} (X)  \cong  \tilde \KK^{\Sp}(X^V).
    \]
    \item Let $\rk V = 2m$. 
            Then there is  a Clifford-Thom isomorphism 
    \[
         \KK^{\Cl(V) \otimes \C} (X) \cong \tilde \KK^{\U} (X^V).
    \]
 \end{enumerate} 
 \end{thm}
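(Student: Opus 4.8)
The plan is to mimic the proof of Theorem \ref{clthom} (equivalently Theorem \ref{main}) with the real orthogonal group $\SO_p$ systematically replaced by the complex unitary group $\U_p$ in case (b), respectively the quaternionic symplectic group $\Sp_p$ in case (a). The entire machinery of Sections \ref{centrioles}--\ref{difficult} was set up for $G \in \{\SO_p, \U_p, \Sp_p\}$ and its iterated centrioles simultaneously, so the point is to track through which ingredients carry over verbatim and which need small adjustments. First I would recall that a complex (resp. quaternionic) vector bundle $\mathscr{E} \to \hat V$ of fibre dimension $p$ is classified by a clutching section $\sigma$ of the bundle $\Map(\SS V, \U(E_+, E_-)) \to X$ (resp. with $\Sp$), and that locally this becomes a map $U \to \Map_*(\SS^{n-1}, \U_p)$ (resp. $\Sp_p$). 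The stable winding number $\eta^s$ is still well-defined because the relevant mapping-space deformation theory of Section \ref{defmapspace} was carried out for all three classical groups and their centrioles.

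The key steps, in order, are: \emph{(i)} Rerun Theorem \ref{mindef} for $G = \U_p$ (case (b)) and $G = \Sp_p$ (case (a)). For this one needs the chain of iterated centrioles of $\U_p$, which by Remark \ref{generalcase} consists of complex Grassmannians $\G_{q/2}(\C^q)$ and unitary groups $\U_q$ --- these have high index for non-minimal geodesics when $q$ is large (Prop.\ \ref{U-index} and the Grassmannian estimate in Remark \ref{generalcase}). For $\Sp_p = P_4(\SO_{8p})$ the needed estimates are already contained in Prop.\ \ref{neq4m-2}. Linear maps $\SS^{k} \to \U_p$ (resp.\ $\Sp_p$) that are isometric great-sphere embeddings correspond, by the complex (resp.\ quaternionic) version of Proposition \ref{clifrep}, to complex (resp.\ quaternionic) orthogonal Clifford representations, i.e.\ to $\Cl_k \otimes \C$- (resp.\ $\Cl_k \otimes \H$-) modules. \emph{(ii)} Define $\Cl(V) \otimes \C$-module bundles and $\Cl(V) \otimes \H$-module bundles over $X$ exactly as in Definition \ref{Lambda}, form the associated Hopf bundles $\mathscr{L} \to \hat V$ using the volume-element splitting (note $n = 4m$ in (a), so the volume element is central and involutive, exactly as in Example \ref{clrepsphere}; in case (b) one has $n = 2m$, where the complex volume element $i^{m}e_1\cdots e_n \in \Cl_n \otimes \C$ plays the analogous role). \emph{(iii)} Prove the bundle-level decomposition theorem, the analogue of Theorem \ref{main}: every complex (resp.\ quaternionic) bundle over $\hat V$ is, after stabilization by trivial bundles and Clifford-Hopf bundles, an affine Hopf bundle $E_0 \oplus (E_1 \otimes \mathscr{L})$, with $E_0, E_1$ determined stably. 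The induction over a cell decomposition of $X$, the ``Assertion'' extending $\hat f$ over a cell, and Lemma \ref{isomorphism} all go through unchanged once the group is replaced. \emph{(iv)} Translate into K-theory: define $\KK^{\Cl(V) \otimes \C}(X)$ and $\KK^{\Cl(V)\otimes\H}(X)$ as in Definition \ref{KCL}, define the Clifford-Thom homomorphisms into $\tilde\KK^{\U}(X^V)$ resp.\ $\tilde\KK^{\Sp}(X^V)$ as in Definition \ref{def:CliffThom}, and read off that they are isomorphisms from step (iii), exactly as Theorem \ref{clthom} follows from Theorem \ref{main}.

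The main obstacle I expect is step (i), specifically obtaining the index estimates for non-minimal geodesics in the iterated centrioles of $\U_p$ and in the quaternionic setting with enough uniformity in $p$ to make the analogue of Proposition \ref{highcon} work, together with the bookkeeping of winding numbers. In the real case the delicate point was the dichotomy $\ell \equiv 4m-2$ versus $\ell \not\equiv 4m-2$ and the fact (Prop.\ \ref{Pell+2}) that $w(\gamma) = 0$ for the relevant geodesics because of an extra anticommuting complex structure; one must check that the corresponding vanishing holds in the complex and quaternionic cases (for $G = \U_p$ the periodicity of complex Clifford modules $\Cl_k \otimes \C$ has period $2$, so the ``special'' index is $\ell$ odd, and one must verify the analogue of Prop.\ \ref{Pell+2}), and similarly for $\Sp_p$ via its identification as $P_4(\SO_{8p})$. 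A secondary technical point is the complex/quaternionic version of Proposition \ref{clifrep}: the same trace argument works, but one must be careful that ``great sphere in $\SO(L)$ through $\id$'' is replaced by ``great sphere in $G$ through $\id$'' and that $\K$-linearity is automatic, so the resulting Clifford family consists of $\K$-linear anticommuting complex structures, i.e.\ a $\Cl_k \otimes \K$-representation. Once these are in place, the rest is a faithful transcription of Sections \ref{difficult} and \ref{CliffordThom}, which is why the authors are content to leave the details to the reader.
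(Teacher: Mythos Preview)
Your proposal is correct and matches the paper's approach exactly: the authors state this theorem with the remark ``Detailed proofs of the following statements are left to the reader,'' indicating precisely the transcription of Theorems \ref{main} and \ref{clthom} from $\SO_p$ to $\U_p$ and $\Sp_p$ that you outline. Your identification of the checkpoints (index estimates via Remark \ref{generalcase} and the identity $\Sp_p = P_4(\SO_{8p})$, the $\K$-linear version of Proposition \ref{clifrep}, and the complex volume element for $n=2m$) is accurate and in fact more explicit than anything the paper provides for this result.
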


The analogues of Proposition \ref{meven} for complex and quaternionic $\Cl(V)$-module bundles are as follows. 

Assume that $V \to X$ is equipped with a spin structure. 
Recall from \eqref{Mk} that the spinor bundle $\Sigma$ associated to $P_{\Spin}(V)$ is quaternionic when $\rk V = n = 4m$ with $m$ odd and it is real when $n = 4m$ with $m$ even. 
If $\Lambda$ is a $(\Cl(V)\otimes \H)$-module bundle (with left $\Cl(V)$-multiplication and right $\H$-multiplication), we put $E = \Hom_{\Cl(V) \otimes \H}(\Sigma,\Lambda)$ for $m$ odd and $E = \Hom_{\Cl(V)}(\Sigma,\Lambda)$ if $m$ is even; in both cases precisely one of the bundles $E$ and $\Sigma$ is (right) quaternionic, and we obtain an isomorphism of $(\Cl(V) \otimes \H)$-module bundles
analogue to \eqref{jdef}, 
$$
	j : E\otimes_{\R}\Sigma \to \Lambda,\ \ \phi\otimes\xi\mapsto\phi(\xi).
$$

For $(\Cl(V)\otimes\C)$-module bundles $\Lambda$ with $\dim V = n = 2m$  we can apply \cite[Prop.\,3.34]{BGV}:
Let  $\Sigma^c$ be the complex spinor bundle associated to $P_{\Spin^c}(V)$ by means of a Spin$^c$-structure on $V$ (cf. \cite[Appendix D]{LM}).
Putting $E = \Hom_{\Cl(V) \otimes \C}(\Sigma^c,\Lambda)$ we obtain that
$$
	j : E\otimes_{\C}\Sigma^c\to\Lambda,\ \ \phi\otimes_{\C}\xi\mapsto\phi(\xi)
$$
is an isomorphism of $(\Cl(V)\otimes\C)$-module bundles.

Together with Theorem \ref{allg} we hence arrive at the following classical Thom isomorphism theorems, which complement Theorem \ref{ThomClassic}.

\begin{thm} \label{final} 
\begin{enumerate}[(a)]  
\item Let $V \to X$ be a spin bundle of rank $n = 4m$. 
Then multiplication with the spinor Hopf bundle $\mathscr{S} \to \hat V$ induces  Thom isomorphisms 
\begin{align*} 
     \KK^{\Sp} (X) & \to \tilde  \KK^{\Sp}(X^V)  \text{ for  even } m\, , \\ 
     \KK^{\Or} (X) &  \to \tilde  \KK^{\Sp}(X^V) \text{ for odd } m \, . 
\end{align*} 
 \item Let $V \to X$ be a $\Spin^c$-bundle of rank $n = 2m$. 
    Then multiplication with the complex spinor Hopf bundle $\mathscr{S}^c \to \hat V$ induces  a Thom isomorphism  
\[
     \KK^{\U} (X) \to \tilde  \KK^{\U}(X^V) \, . 
\]
\end{enumerate} 
\end{thm}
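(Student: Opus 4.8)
The plan is to deduce all three isomorphisms from the general Clifford-Thom isomorphism in Theorem \ref{allg} together with the $\K$-linear analogues of Proposition \ref{meven} stated just above. The strategy mirrors exactly the passage from Theorem \ref{clthom} to Theorem \ref{ThomClassic}: first identify $\KK^{\Cl(V) \otimes \K}(X)$ with an ordinary (real, complex or quaternionic) K-theory of $X$ via a twisted-spinor decomposition, and then observe that under this identification the Hopf-bundle construction defining the Clifford-Thom map becomes multiplication with the spinor Hopf bundle $\mathscr{S}$ (respectively $\mathscr{S}^c$).

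For part (a), fix a spin structure on $V$ with spinor bundle $\Sigma \to X$; recall from \eqref{Mk} that $\Sigma$ is real when $m$ is even and quaternionic when $m$ is odd. Using the isomorphism $j : E \otimes_{\R} \Sigma \to \Lambda$ constructed above (with $E = \Hom_{\Cl(V)\otimes\H}(\Sigma,\Lambda)$ for $m$ odd and $E = \Hom_{\Cl(V)}(\Sigma,\Lambda)$ for $m$ even), together with the analogues of Lemma \ref{95} and Proposition \ref{96} for $\Cl(V)\otimes\H$-module bundles, I would show that
\[
     \KK^{\K}(X) \to \KK^{\Cl(V)\otimes\H}(X) \, , \quad [E] \mapsto [E \otimes_{\R} \Sigma] \, ,
\]
is an isomorphism, where $\K = \H$ for even $m$ and $\K = \R$ for odd $m$ (the parity of the algebra carrying $E$ being the \emph{complement} of the parity of $\Sigma$, since $\Cl(V)\otimes\H$ forces one extra quaternionic factor). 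Composing with the isomorphism $\KK^{\Cl(V)\otimes\H}(X) \cong \tilde\KK^{\Sp}(X^V)$ of Theorem \ref{allg}(a) and tracing through the definitions, one checks that the composite sends $[E]$ to $\chi([E \otimes_{\R} \mathscr{S}])$, because the Hopf bundle associated to $E \otimes_{\R} \Sigma$ is $E \otimes_{\R} \mathscr{S}$ (the volume-element splitting only affects the $\Sigma$-factor). Part (b) is entirely parallel, using a $\Spin^c$-structure, the complex spinor bundle $\Sigma^c$, the isomorphism $j : E \otimes_{\C} \Sigma^c \to \Lambda$ with $E = \Hom_{\Cl(V)\otimes\C}(\Sigma^c,\Lambda)$ cited from \cite[Prop.\ 3.34]{BGV}, and Theorem \ref{allg}(b); since $\Cl(V)\otimes\C$ never introduces an extra $\H$, the bundle $E$ is always complex and one lands in $\KK^{\U}(X) \to \tilde\KK^{\U}(X^V)$, $[E]\mapsto \chi([E\otimes_{\C}\mathscr{S}^c])$.

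I expect the main obstacle to be bookkeeping rather than conceptual: precisely matching the division-algebra types on the two sides of the twisted-spinor isomorphism in each parity case (this is where the footnote-style Wedderburn computations in the proof of Proposition \ref{meven} must be re-run with $\Cl_n$ replaced by $\Cl_n \otimes \H$ or $\Cl_n \otimes \C$), and verifying that the identification of $\KK^{\Cl(V)\otimes\K}(X)$ with $\KK^{\K'}(X)$ respects stable isomorphism — i.e.\ establishing the analogues of Lemma \ref{95} for these tensored Clifford bundles, which again reduces to the point case and a partition-of-unity argument. Once those identifications are in place, the commutativity of the relevant diagrams is a formal consequence of naturality of the Hopf-bundle construction, so the final assembly is immediate. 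Detailed verifications are left to the reader, as indicated before the statement of Theorem \ref{allg}.
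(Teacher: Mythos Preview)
Your proposal is correct and follows precisely the route indicated in the paper: compose the Clifford-Thom isomorphism of Theorem \ref{allg} with the twisted-spinor identification of $\KK^{\Cl(V)\otimes\K}(X)$ described just before the theorem, exactly parallel to how Theorem \ref{ThomClassic} was obtained from Theorem \ref{clthom} via Proposition \ref{96}. The paper itself does not give further details beyond ``Together with Theorem \ref{allg} we hence arrive at\ldots'', so your elaboration of the bookkeeping (analogues of Lemma \ref{95} and the parity analysis) is more explicit than, but entirely consistent with, the paper's sketch.
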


\begin{disc} \label{Discussion} 
{We will point out some connections of the argument at the beginning of this section to the classical monograph \cite{KaroubiBook}.
Let $Q$ denote the given (positive definite)  quadratic form on the Euclidean bundle $V \to X$. 
Then the  Clifford algebra bundle $\Cl(V) \to X$ in the sense of Definition \ref{Lambda} is equal to the  Clifford algebra bundle $C(V, - Q) \to X$ in the sense of  \cite[IV.4.11]{KaroubiBook}  \footnote{Notice the sign convention for the construction of the Clifford algebras  in \cite[III.3.1]{KaroubiBook}, which is different from ours.} and $\KK^{\Cl(V)}(X)$ from  Definition \ref{KCL}   is equal to $\KK( \mathscr{E}^{(V,-Q)} (X))$, the Grothendieck group of the Banach category $\mathscr{E}^{(V,-Q)} (X)$  of $C(V,-Q)$-module bundles over $X$, compare \cite[II.1.7]{KaroubiBook}. 

Since $V$ is oriented and of rank divisible by four we have $C(V,Q) \cong C(V,-Q)$ as algebra bundles, induced by the map $V \to C(V,Q)$, $v \mapsto \omega \cdot v$, where $\omega \in \Cl(V)$ denotes the volume section of $C(V,-Q)$ (note that $v^2 = - \|v\|^2$ and $4 \mid \rk  V$ imply $(\omega \cdot v)^2 = +\|v\|^2$). 
Hence 
\begin{equation} \label{diffid} 
    \KK( \mathscr{E}^{(V,Q)}(X)) \cong   \KK( \mathscr{E}^{(V,-Q)} (X)) =  \KK^{\Cl(V)}(X)  
\end{equation} 
 for the given $V \to X$.

In the following we denote by   $C(V) \to X$ the algebra bundle $C(V,Q) \to X$ and $\mathscr{C} = \mathscr{E}^V(X)$ the Banach category of module bundles over $C(V) \to X$. 

As in \cite[Section IV.5.1]{KaroubiBook} let $\KK^V(X) = \KK(\phi^V) $ be  the Grothendieck group of the forgetful functor $\phi^V : \mathscr{E}^{V \oplus \R}(X) \to \mathscr{E}^V(X)$. 
Using $C(\R, Q) = \R \oplus \R$ for the standard quadratic form $Q$ on $\R$, compare \cite[III.3.4]{KaroubiBook}, we have $C(V \oplus \R) \cong C(V) \otimes ( \R \oplus \R) = C(V) \oplus C(V)$, see \cite[Prop. III.3.16.(i)]{KaroubiBook} (note that $C(V,Q) > 0$, which means $\omega^2 = + 1$, holds since $4 \mid \rk V$).  

Hence $\mathscr{E}^{V \oplus \R}(X) \cong \mathscr{C} \times \mathscr{C}$ and  the forgetful functor $\phi^V$ can be identified with the functor $\psi: \mathscr{C} \times \mathscr{C}  \to \mathscr{C}$, sending a pair of $C(V)$-module bundles $(E, F)$ to their direct sum $E \oplus F$, compare \cite[III.4.9]{KaroubiBook} for the cases $q = 0$ and $q =  4$. 
Since this functor has a right inverse $E \mapsto (E,0)$ the first and last maps in the exact sequence 
\[
     \KK^{-1}( \mathscr{C} \times \mathscr{C} )  \stackrel{\psi_*}{\longrightarrow}  \KK^{-1}( \mathscr{C}) \to \KK(\phi^V) \to \KK( \mathscr{C} \times \mathscr{C}) \stackrel{\psi_*}{\longrightarrow}  \KK( \mathscr{C} ) 
\]
from \cite[II.3.22]{KaroubiBook} are surjective. 
We hence obtain isomorphisms 
\[
    \KK(\phi^V)  \cong \ker \left(\psi_* :  \KK( \mathscr{C}) \oplus \KK(\mathscr{C}) \stackrel{(a,b) \mapsto a+ b}{\longrightarrow} \KK( \mathscr{C} ) \right) \cong \{ (a,-a) \mid a \in \KK( \mathscr{C} ) \} \cong \KK(\mathscr{C}) 
\]
showing that 
\[
    \KK^V(X) \cong  \KK(\phi^V) \cong \KK(\mathscr{C}) \stackrel{\eqref{diffid}}{\cong} \KK^{\Cl(V)} (X) \, . 
\]
Put differently: For the given $V \to X$ Karoubi's theory $\KK^V(X)$  is the topological K-theory based on $\Cl(V)$-module bundles over $X$. 
We remark that this is not true in general for  bundles $V \to X$ of rank not divisible by four.

One can show  that with respect to this identification the Clifford-Thom homomorphism in Definition \ref{def:CliffThom} is identified with the map $t : \KK^V(X) \to \KK(B(V), S(V))$ from \cite[IV.5.10.]{KaroubiBook}. 
Hence Theorem \ref{clthom} recovers \cite[Theorem IV.5.11]{KaroubiBook} (whose full proof is given in \cite[IV.6.21]{KaroubiBook}), for oriented $V \to X$ of rank divisible by four.}
\end{disc}


\begin{thebibliography} {9999}

\bibitem{A} M.\ Atiyah, {\em K-Theory}, Benjamin Inc.\ 1967


\bibitem{ABS} M.\ Atiyah, R.\ Bott, A.\ Shapiro, {\em Clifford Modules}. Topology {\bf 3}, Suppl.\ 1
(1964), 3--38

\bibitem{BGV} N.\ Berline, E.\ Getzler, M.\ Vergne, {\em Heat Kernels and Dirac Operators},
Springer 1996

\bibitem{B} R.\ Bott, {\em The stable homotopy of the classical groups}, Ann. Math. {\bf 70} (1959), 313--337

\bibitem{Br} G.\ E.\ Bredon, {\em Topology and Geometry}, Springer 1991


\bibitem{CN} B.-Y.\ Chen, T.\ Nagano, {\em Totally geodesic submanifolds of symmetric spaces II}, Duke Math.\ J.\ {\bf 45} (1978), 405--425

\bibitem{D} P.\ K.\ Draxl, {\em Skew Fields}, Lond.\ Math.\ Soc.\ Lect.\ Notes {\bf 81} (1983)



\bibitem{EHa} J.-H.\ Eschenburg, B.\ Hanke,  {\em Bott Periodicity, Submanifolds, and Vector Bundles}, In: Hermitian Grassmann Submanifolds, ed. Suh, Ohnita, Zhou, Kim, Lee, Springer Proceedings in Mathematics \& Statistics {\bf 203} (2017), 295--309 




\bibitem{KaroubiBook} M. Karoubi, {\em K-Theory, an introduction}, Grundlehren der mathematischen Wissenschaften {\bf 226} (1978), Springer-Verlag

\bibitem{LM} H.\ B.\ Lawson, M.-L.\ Michelsohn, {\em Spin Geometry}, Princeton 1989

\bibitem{M} J.\ Milnor, {\em Morse Theory}, Princeton 1969

\bibitem{Mt1} S.\  A.\ Mitchell, {\em The Bott filtration of a loop group}, in: Springer Lect.\ Notes Math.\ {\bf 1298} (1987), 215--226

\bibitem{Mt2} S.\ A.\ Mitchell, {\em Quillen’s theorem on buildings and the loops on a symmetric space}, 
Enseign.\ Math. (II) {\bf 34} (1988), 123--166


\bibitem{Q1} P.\ Quast, {\em Complex structures and chains of symmetric spaces}, Habilitationsschrift Universit\"at Augsburg (2010),   \url{https://opus.bibliothek.uni-augsburg.de/opus4/84827}, urn:nbn:de:bvb:384-opus4-848278 

\bibitem{Q2} P.\ Quast, {\it Centrioles in Symmetric Spaces}, Nagoya Math.\ J.\ {\bf 211} (2013), 51--77

\end{thebibliography}
\end{document}